\newtheorem{Definition}{Definition}[section]
\newtheorem{Theorem}{Theorem}[section]
\newtheorem{Remark}[Theorem]{Remark}
\newtheorem{Lemma}[Theorem]{Lemma}
\newtheorem{Corollary}[Theorem]{Corollary}
\newtheorem{Proposition}[Theorem]{Proposition}
\numberwithin{equation}{section}
\newcommand{\bb}[1]{\mathbb{#1}}
\newcommand{\ds}{\displaystyle}
\newcommand{\cL}{\mathcal{L}}
\newcommand{\llan}{\left\langle}
\newcommand{\rran}{\right\rangle}
\newcommand{\sq}{\frac{1}{2}}
\title{Interior gradient and Hessian estimates for the Dirichlet problem of semi-linear degenerate elliptic systems: a probabilistic approach\footnotemark[1]}
\author{Jun Dai \footnotemark[2] \and Shanjian Tang\footnotemark[3] \and Bingjie Wu \footnotemark[2]}
\begin{document}
\maketitle
\begin{abstract}
In this paper, we give interior gradient and Hessian estimates for systems of semi-linear degenerate elliptic partial differential equations on bounded domains, using both tools of backward stochastic differential equations and quasi-derivatives.
\end{abstract}

{\bf Key Words. } semi-linear degenerate elliptic partial differential equations, quasi-derivatives, backward stochastic differential equations, Dirichlet problems

\footnotetext[2]{School of Mathematical Sciences, Fudan University, Shanghai 200433, China (\textit{e-mail: 13110180052@fudan.edu.cn (Jun Dai), 13110840010@fudan.edu.cn (Bingjie Wu)}).}
\footnotetext[3]{Department of Finance and Control Sciences, School of
Mathematical Sciences, Fudan University, Shanghai 200433, China (\textit{e-mail: sjtang@fudan.edu.cn}).}
\footnotetext[1]{Partially supported by National Science Foundation of China (Grant No. 11631004)
and Science and Technology Commission of Shanghai Municipality (Grant No. 14XD1400400).   }
\section{Introduction}
Let $d, d_1$, and $k$ be given positive integers. Let $D$ be a bounded domain in a $d$-dimensional Euclidean space $\mathbb{R}^d$.
Consider the Dirichlet problem for a system of semi-linear degenerate elliptic partial differential equations (PDEs) of second order

\begin{equation}\label{eq:Par1}
 \begin{cases}
    \cL u(x)+f(x,u(x),\nabla u(x)\sigma(x))=0, \quad &x\in D;\\[8pt]
                 u(x)=g(x), \quad &x\in\partial D,
 \end{cases}
\end{equation}
where $u:=(u_1,\cdots,u_k)^*, \cL u:=(\cL u_1,\cdots,\cL u_k)^*,$
$$\cL u_m(x):=\sum_{i,j=1}^d a_{i,j}(x)\partial^2_{i,j}u_m(x)+\sum_{i=1}^d b_i(x)\partial_i u_m(x), \quad m=1,\cdots, k, $$
 with $a=\frac{1}{2}\sigma\sigma^*$. Here and in the following,  the asterisk $*$ in the superscript means the transpose.

Let $W$ be a $d_1$-dimensional Wiener process in the probability space $(\Omega,\mathcal{F},\mathbb{P})$ with $\{\mathcal{F}_t, t\geq 0\}$ being the augmented natural filtration. The probabilistic solution of (\ref{eq:Par1}) is given by Peng~\cite{PEN} as
\begin{equation}\label{eq:Sol1}
 u(x)=Y_0(x), \qquad x \in \overline{D},
\end{equation}
where $\{(Y_t(x),Z_t(x)), 0\le t\le \tau\}$ is the unique adapted solution to the backward stochastic differential equation (BSDE)
\begin{equation}\label{eq:BSDE1}
 \begin{cases}
    dY_t=-f(X_t(x),Y_t,Z_t)\, dt+Z_t\, dW_t, \qquad &t\in [0,\tau),\\[3mm]
    Y_{\tau}=g(X_{\tau}(x)),
 \end{cases}
\end{equation}
with $\{X_t(x), t\ge 0\}$ being the solution to the stochastic differential equation (SDE)
\begin{equation}\label{eq:SDE1}
 X_t=x+\int_0^t\sigma(X_s)dW_s+\int_0^tb(X_s)ds,
\end{equation}
and $\tau:=\tau(x):=\inf\{t>0, X_t(x)\notin D\}$ is the first exit time of $X_t(x)$ from $D$, under the assumption that the coefficients $b, \sigma, f$, and $g$ and the domain $D$ are all sufficiently smooth.

When the coefficients $(b, g, \sigma):\bb{R}^d\to\bb{R}^d\times \bb{R}^k\times \bb{R}^{d\times d_1}$ and $f: \bb{R}^d\times\bb{R}^k\times\bb{R}^{k\times d_1}\to\bb{R}^k$ are uniformly Lipschitz continuous in all of their arguments, one has to consider weak solutions for the associated PDEs.
One weak solution is the notion of viscosity solutions. The function $u$ defined by (\ref{eq:Sol1}) is shown by Darling and Pardoux~\cite{DAR} to be the unique continuous viscosity solution of \eqref{eq:Par1}, when $k=1$.
Another weak solution of PDEs is the notion of Sobolev solutions. Ouknine and Turpin \cite{OUK} gave a representation for the Sobolev solutions of degenerate parabolic PDEs through FBSDEs. They were inspired by the work of Bally and Matoussi \cite{BAL}, in which the Sobolev solutions of semi-linear SPDEs are described via BDSDEs. Later, Feng, Wang and Zhao \cite{FEN} studied the existence, uniqueness and the probabilistic representation of the Sobolev solutions of quasi-linear parabolic and elliptic PDEs in $\mathbb{R}^d$. We are interested in those conditions which yield  further regularity of $u$.

Using a deterministic approach, Caffarelli~\cite{CAF} obtained a priori $W^{2,p}$ estimates for the viscosity solutions of second order, uniformly elliptic, fully non-linear equations: $F(D^2u,x)=f(x)$ in a unit ball.
Freidlin \cite{FRE} obtained an early probabilistic result that the solution is smooth for degenerate linear elliptic equations (\ref{eq:Par1}) with $f(x)=0$, $x\in \overline{D}$ if the boundary data $g$ is sufficiently smooth. Peng \cite{PEN} showed that the classical solution of the nondegenerate quasi-linear elliptic PDE has a probabilistic interpretation $u(x)=Y_0(x)$, $x\in\overline D$. Darling and Pardoux~ \cite{DAR} proved that when $f$ is monotone in $y$, $u(\cdot)=Y_0(\cdot)$ is a bounded and continuous viscosity solution to the Dirichlet problem for a class of semi-linear elliptic PDEs. Later, Briand and Hu \cite{BRI} gives a stability result for BSDEs with random terminal time which is associated to a system of semi-linear elliptic PDEs by partially relaxing the monotonicity assumption on the coefficient.

Moreover, to obtain in a probabilistic way the gradient estimates of the solution of second order PDEs, the now well-known theory of stochastic flows plays a crucial role. For example, Pardoux and Peng \cite{PAR} used the tool of BSDEs to investigate the regularity properties of the solution of parabolic PDEs, and they proved that the solution of BSDEs $\{Y_s^{t,x},\ (s,t,x)\in [0,T]^2\times\mathbb{R}^d\}$ has a version whose trajectories belong to $C^{0,0,2}([0,T]^2\times \mathbb{R}^d)$. Tang \cite{TAN} extended their context to incorporate random coefficients, and proved that, when $f(t,x,y,z)$ is linear in $z$, the regularity of the solution of BSDEs can be derived from those of the coefficients of FBSDEs. These works take advantage of the Cauchy problem where the space variable takes values over the whole space. The methodology is difficult to be adapted to the Dirichlet problem of elliptic PDEs in a bounded domain: estimating the gradient of the function $u$ through directly differentiating the expression (\ref{eq:Sol1}) involves the differentiation of the exit time $\tau(x)$ with respect to $x$, while the function $\tau(x)$ is not necessarily differentiable with respect to $x$. To get around such a difficulty, Delarue \cite{DEL} established a priori H\"{o}lder estimate of Krylov and Safonov type for the viscosity solution of a degenerate quasi-linear elliptic PDE, where the H\"{o}lder bound does not depend on the regularity of $\sigma$ and $f$. He extended that of Krylov and Safonov \cite{KRY9}, by building a special type of SDEs with $\sigma$ depending on both $u$ and its gradient $\nabla u$.

Alternative powerful tool is that of quasi-derivative. It was first introduced by Krylov \cite{KRY1}, to find a different condition on coefficients such that $c$ is sufficiently large compared to first derivatives of $\sigma$ and $b$ with respect to $x$ under which $u$ is twice continuously differentiable in $\mathbb{R}^d$. This condition weakens the known conditions mentioned in \cite[page 257]{KRY6} where no quasi-derivative is used. Since then this technique has been applied to investigate the smoothness of solution of various elliptic and parabolic PDEs. Krylov \cite{KRY2,KRY4,KRY8} applied different quasi-derivative methods to study the interior regularity of harmonic functions of degenerate elliptic operators. Later Krylov \cite{KRY3} obtained $C^{1,1}$-regularity of the solution up to the boundary for the Dirichlet problem of degenerate Bellman equations under the boundary value assumption that $g\in C^4(\overline{D})$ (see \cite[Assumption 1.3, page 67]{KRY3} where $g$ should be required to lie in $C^4(\overline{D})$, though there it was only supposed to lie in $C^3(\overline{D})$; and see  Krylov's own exposition~\cite[page 2]{KRY8} for this point), which holds true for the degenerate linear elliptic equations (see \cite[Theorem 2.1, page 74]{KRY3}). The ideas are based on adding a $4$-dimensional process $y_t$ to the original $d$-dimensional processes $x_t$ (see \cite[page 83]{KRY3}) such that the augmented  process $z_t=(x_t,y_t)$ never leaves a surface in $\mathbb{R}^{d+4}$. In this way, he can get rid of the dependence of the first exit time on the initial point and use the techniques of \cite{KRY1} to obtain moment estimates of quasi-derivatives in the whole space. Recently, Zhou \cite{WEI1} introduced the notion of  the second quasi-derivative to estimate the derivatives up to the second order of $u$ inside the domain under the weaker boundary value assumption that $g\in C^{1,1}(\overline{D})$. Under the weaker boundary regularity assumption of $g\in C^{0,1}(\overline{D})$,  it has been illustrated (see, for instance, \cite[page 58-63]{KRY8}) that, the first-order derivatives of $u$ fail to be bounded up to the boundary in both PDE methods and quasi-derivative methods, even for the Laplacian equation (i.e., $\cL=\Delta$). He commented that for $g\in C^{1,1}(\overline{D})$, one can only expect to prove interior $C^{1,1}$-regularity (see \cite[page 3065]{WEI1}). His proof relied on a probabilistic interpretation of the linear degenerate elliptic PDEs. He introduced two local martingales with the help of quasi-derivatives and their auxiliary processes to formulate first and second derivatives of $u$, respectively (see \cite[Theorem 2.2]{WEI1}). Besides, instead of adding four more dimensions as \cite[page 83]{KRY3}, he constructed two families of local super-martingales to bound the moments of quasi-derivatives near the boundary and in the interior of the domain, respectively (see \cite[Lemmas 3.3 and 3.4]{WEI1}). All these existing works which employ the method of quasi-derivatives discussed either the linear second-order PDEs or the so-called Bellman equation (which is a fully nonlinear PDE) arising from optimal stochastic control problems.

In our context for a $k$-dimensional vector-valued nonlinear function $f$, we use both tools of BSDEs and quasi-derivatives to establish the gradient and Hessian estimates for the solution $u$  to the Dirichlet problem for a system of semi-linear degenerate second-order partial differential equations (\ref{eq:Par1}).

Our objective is to establish the counterpart of Zhou's estimates~\cite[Theorem 3.1]{WEI1} for a system of semi-linear elliptic PDEs, which is precisely stated in Theorem~\ref{thm:estimate} at the end of Section \ref{sec:ass} below.

 In contrast to Zhou \cite{WEI1}, we have new difficulties. In fact, for the gradient estimate, we need to calculate the difference $|Y_0^\delta(x+\delta\xi_0)-Y_0(x)|$ between the solutions of the perturbed BSDE (in (\ref{eq:FBSDE1})) and the unperturbed one and appeal to the BSDE estimates. As a consequence, new barrier functions are introduced near the boundary and in the interior of the domain (see Lemmas \ref{le:quasiestimate1} and \ref{le:quasiestimate2}), so as to bound higher moment estimates of quasi-derivatives, which leads to that the process $(\psi_{(\xi_t)}\psi^{-1})^p$ is considered in a better space. For the Hessian estimate, we estimate the second-order difference $|Y_0^\delta-2Y_0+Y_0^{-\delta}|$. To deal with the nonlinearity of $f$ in $\nabla u$, we use the technical skills developed in the estimates of BSDEs by Pardoux and Peng~\cite[Theorem 2.9]{PAR}. Finally, we emphasize that we consider a system of semi-linear degenerate elliptic PDEs rather than a single equation, where $u$, $f$ and $g$ take values in $\bb{R}^k$, although it must be said that our interior estimates are not sharper than those of \cite[Theorem 3.1]{WEI1} in some sense.

The paper is organized as follows: In Section \ref{sec:ass}, we set notations and list the standing assumptions. Then we introduce some standard estimates for the solution of random terminal BSDEs, and recall the concept of the quasi-derivative and some known basic results.  We end up with the statement of our main results. In Section \ref{sec:appl}, we build four barrier functions to get some moment estimates of the quasi-derivatives and derive generalized assertions at last. In Section \ref{sec:der}, we use the BSDE estimates to establish the interior gradient and Hessian estimates of $u$ in (\ref{eq:Par1}) under the aforementioned assumptions, and then show the existence and uniqueness of $u$ in (\ref{eq:Par1}).


\section{Preliminaries and Statement of the Main Results}\label{sec:ass}
Let $\mathcal{A}:=\{\alpha=(\alpha_1,\cdots,\alpha_d): \alpha_i, i=1,\cdots,d\ \text{are nonnegative integers}\}$ be the set of multi-indices. For any $\alpha\in\mathcal{A}$ and $x=(x_1,\cdots,x_d)\in\mathbb{R}^d$, denote
\begin{equation*}
  |\alpha|:=\sum\limits_{i=1}^n \alpha_i,\quad \partial^\alpha:=\partial_1^{\alpha_1}\partial_2^{\alpha_2}\cdots\partial_d^{\alpha_d}.
\end{equation*}

In a Euclidean space $\mathbb{E}$, denote by $\langle\cdot,\cdot\rangle$ the inner product, and the norm by $|\cdot|_{\mathbb{E}}$ or simply by $|\cdot|$ when no confusion is made. Let $\mathfrak{B}$ be the set of all skew-symmetric $d_1\times d_1$ matrices. Denote by $\|A\|$ the norm of a matrix $A$, which is defined to be the square root of the sum of all the squared components, i.e. $\|A\|^2$ is the trace of $AA^*$.

Denote by $C(\overline{D})$ the Banach spaces of continuous functions $g$ in $\overline{D}$ equipped with the norm
$$|g|_0=\sup\limits_{x\in \overline{D}}|g(x)|,$$
and by $C^m(\overline{D})$ with $m=1,2$ the Banach spaces of once (for $m=1$) or twice (for $m=2$) continuously differentiable functions $g$ in $\overline{D}$ equipped with the respective norm:
\begin{equation*}
 |g|_1:=|g|_0+|g_x|_0, \quad |g|_2:=|g|_1+|g_{xx}|_0,
\end{equation*}
where $g_x$ is the gradient vector of $g$, and $g_{xx}$ is the Hessian matrix of $g$. For $\beta\in (0,1]$, the H\"{o}lder space $C^{m,\beta}(\overline{D})$ is the Banach subspace of $C^m(\overline{D})$ consisting of all functions $g$ with the norm
\begin{equation*}
 |g|_{m,\beta}:=|g|_m+[g]_{m,\beta},\quad \text{where}\ [g]_{m,\beta}:=\sum\limits_{|\alpha|=m}\sup\limits_{x,y\in \overline{D}}\frac{|\partial^\alpha g(x)-\partial^\alpha g(y)|}{|x-y|^\beta}.
\end{equation*}

For a function $f\in C^{0,1}(\overline{D}\times\mathbb{R}^k\times\mathbb{R}^{k\times d_1})$, define
\begin{equation*}
  \|f(\cdot)\|_{0,1}:=|f(\cdot,0,0)|_0+[f]_{0,1,x},\quad \text{where}\ [f]_{0,1,x}:=\mathop{\sup\limits_{x,x'\in \overline{D}}}_{(y,z)\in\mathbb{R}^k\times\mathbb{R}^{k\times d_1}}\frac{|f(x,y,z)- f(x',y,z)|}{|x-x'|}.
\end{equation*}
For a function $f\in C^{1,1}(\overline{D}\times\mathbb{R}^k\times\mathbb{R}^{k\times d_1})$, define
\begin{equation*}
  [f]_{1,1}:=\sup\limits_{\Xi,\Xi'\in\overline{D}\times\mathbb{R}^k\times\mathbb{R}^{k\times d_1}}\frac{|\partial f(\Xi)-\partial f(\Xi')|}{|\Xi-\Xi'|}.
\end{equation*}

Denote by $H_{k,\rho}^2(\overline{D})$ the weighted Sobolev space, equipped with the norm:
\begin{equation*}
 |\varphi|_{k,\rho}:=\left\{\sum\limits_{0\leq|\alpha|\leq k}\int_D |\partial^\alpha \varphi(x)|^2\rho^{-1}(x)dx\right\}^{\frac{1}{2}},
\end{equation*}
where $\rho(x):=(1+|x|^2)^q,\ q\geq 2$ is a weight function.

 For $\{\mathcal{F}_t\}$-stopping time $\tau$ and some real number $\beta$, $\mathcal{M}_\beta(0, \tau; V)$ denotes the Hilbert space of all progressively measurable processes $X$ taking values in the Euclidean space $V$, such that
\begin{equation*}
  \|X\|_{\mathcal{M}_\beta}:=E\left[\int_0^\tau e^{2\beta s}|X_s|^2ds\right]^{\frac{1}{2}}<\infty.
\end{equation*}

Denote by $\mathbb{M}^m(D, \sigma, b)$ the set of real-valued $m$-times continuously differentiable functions given on $D$ such that for any $v\in\mathbb{M}^m(D, \sigma, b)$ the process $\{v(X_t), [0,\tau]\}$ is a local martingale relative to $\mathcal{F}_t$ for any $x\in D$. We write $\mathbb{M}^m$ for $\mathbb{M}^m(D, \sigma, b)$ whenever no confusion is made.

For $y,z\in \mathbb{R}^d$ and the $d_2$-dimensional column vector function $\phi$, set $\phi:=(\phi_1,\cdots, \phi_{d_2})^*$, and
\begin{equation*}
  \phi_{(y)}:=\left(\sum_{i=1}^d \partial_i \phi_m\cdot y_i\right)^*_{1\le m\le d_2}, \quad \phi_{(y)(z)}:=\left(\sum_{i,j=1}^d \partial^2_{ij} \phi_m\cdot y_i z_j\right)^*_{1\le m \le d_2}.
\end{equation*}

 Write $E_x$ for the expectation of a functional of the underlying process which takes value $x$ at the initial time $0$, and $N(K_1, K_2,\cdots)$ for a constant $N$ to indicate its dependence on $K_1, K_2, \cdots$ whenever necessary.

We introduce the following assumptions with  constants $p=1,2$ and $q=0,1$.
The assumptions $(H1)-(H3)$ are necessary for the well-posedness of solutions to SDEs.

\medskip
$(H1)$ $\sigma$ and $b$ are twice continuously differentiable in $\mathbb{R}^d$.

\medskip
$(H2)$ The domain $D\in C^4$ is bounded  in $\mathbb{R}^d$. There is a function $\psi\in C^4$ such that
(i) $\psi(x) >0 \  \hbox{\rm for } x\in D,$   (ii)  $\psi(x)=0$ and $|\psi_x(x)|\geq1$  for $x\in\partial D$,
and (iii) the following inequality holds true:
\begin{equation}\label{eq:psi}
 \cL\psi(x):=\sum_{i,j=1}^da_{ij}(x)\partial^2_{ij}\psi(x)+ \sum_{i=1}^db_i(x)\partial_i\psi(x)\leq-1 \quad  \hbox{ \rm for } x\in D.
\end{equation}
In what follows, we write $D_\lambda:=\{x\in D: \psi(x)>\lambda\}$ for $\lambda\in(0,1)$.

\medskip
$(H3)$ There exists a constant $K_0>0$, such that
\begin{equation*}
\sum_{i=1}^d\sum_{j=1}^{d_1}|\sigma_{ij}|_2+\sum_{i=1}^d|b_i|_2+|\psi|_4 \leq K_0.
\end{equation*}

The assumptions $(H4)-(H8)$ are necessary for the smoothness.

\medskip
$(H4)$ $f\in C^{0,1}(\overline{D}\times\mathbb{R}^k\times\mathbb{R}^{k\times d_1})$, and there exist constants $L$, $L_0>0$, such that
\begin{equation*}
|f(x,y,z)-f(x,\bar{y},\bar{z})|\leq L|y-\bar{y}|+L_0|z-\bar{z}|,  \quad  x\in \overline{D};  y, \bar{y}\in \mathbb{R}^k; z, \bar{z}\in\mathbb{R}^{k\times d_1}.
\end{equation*}

\medskip
$(H5)$ There exists a constant $\mu\in\mathbb{R}$ such that
\begin{equation*}
\langle f(x,y,z)-f(x,\bar{y},z),(y-\bar{y})\rangle\leq -\mu |y-\bar{y}|^2, \quad x\in \overline{D}; y, \bar{y}\in \mathbb{R}^k; z\in\mathbb{R}^{k\times d_1}.
\end{equation*}

\medskip
$(H6)_q$ $g\in C^{q,1}(\overline{D})$.

\medskip
$(H7)$ There exists constants $\beta$ and $\vartheta$ such that
\begin{equation*}
0<\mu<L,\ -\mu+2L_0^2<2\beta<0,\ \text{and}\ 2\vartheta=-2\mu+ L_0^2,
\end{equation*}
where $L$ and $L_0$ are the Lipschitz constants of $f$ with respect to $y$ and $z$ respectively in $(H4)$, and $\mu$ is the monotonicity constant of $f$ in $(H5)$.

\medskip
$(H8)$ $f\in C^{1,1}(\overline{D}\times\mathbb{R}^k\times\mathbb{R}^{k\times d_1})$, and for any $(x,y,z)\in\overline{D}\times\mathbb{R}^k\times\mathbb{R}^{k\times d_1}$, $f_y(x,y,z)\leq -\mu$.
%

The assumptions $(H9)$ and $(H10)_p$ are necessary for controlling the moments of quasi-derivatives.

\medskip
$(H9)$ The inequality $\langle a n,n\rangle>0$ holds for any unitary normal vector $n$ at $\partial D$.

\medskip
$(H10)_p$ There exist functions $(\rho, M): D\to \mathbb{R}^d\times \mathbb{R}$ and $Q(\cdot,\cdot): D\times\mathbb{R}^d\to\mathfrak{B}$, such that
(i)  $(\rho, M)$ is bounded in $D_\lambda$ for any $\lambda\in(0,1)$, and (ii)  $Q(\cdot,y)$ is  bounded in $D_\lambda$ for any $(\lambda, y) \in (0,1)\times \mathbb{R}^d$ and $Q(x,\cdot)$ is a linear function for any $x\in D$.
 Furthermore,  we have for $\beta$ satisfying $(H7)$
  \begin{eqnarray}
  &&\ds 2p(4p-1) \left\| \sigma_{(y)}(x)+\langle\rho(x),y\rangle\sigma(x)+\sigma(x)Q(x,y)\right\|^2+4p\left\langle y,b_{(y)}(x)+2\langle\rho(x),y\rangle b(x)\right\rangle\nonumber\\
  &\leq&\ds (-4p\beta-1)+2pM(x)\langle a(x)y,y\rangle, \quad \forall (x, y)\in D \times \mathbb{R}^d \hbox{ \rm with } |y|=1.\label{Assump2}
  \end{eqnarray}

\begin{Remark}\label{remark:ass}
 \rm{(i)} It is easy to see that $(H4)$ implies $(H5)$ for $L\le -\mu$. Thus, $(H5)$ gives some additional restriction only for $L>-\mu$ (see \cite[page 363]{YON}).

 \rm{(ii)} For $f\in C^1(\overline{D}\times\mathbb{R}^k\times\mathbb{R}^{k\times d_1})$, conditions $(H4)$ and $(H5)$ yield $\|f_z\|\leq L_0$.

\rm{(iii)} $(H4)$ implies that $f$ is continuous in $x$ and thus $|f(\cdot,0,0)|_0<\infty$, and moreover that $f$ is Lipschitz continuous in $(x,y,z)$. Consequently, $(H4)$ implies that $\|f(\cdot)\|_{0,1}<\infty$. In addition to the assumption that  all the first-order partial derivatives of $f$ are globally Lipschitz continuous in $(x,y,z)$, $(H8)$ implies that $[f]_{1,1}<\infty$.


 \rm{(iv)} $(H9)$ and $(H10)_p$ are conditions for guaranteeing that the moments of quasi-derivatives near the boundary or in the interior of the domain do not grow too fast. $(H9)$ implies $a$ is non-degenerate along the normal to the boundary. However, if $\sigma$ is a constant and $D$ is a bounded domain, $(H9)$ implies that $a$ is uniformly non-degenerate. $(H10)_p$ is weaker than the non-degenerate condition. Indeed, assume that $M=1,\ \rho=Q=0,\ p=\frac{1}{2}$ and $d=d_1=1$ for the sake of simplicity, then we will have $2\beta+1+2b'(x)+|\sigma'(x)|^2\leq a(x)$, where the sum of the terms on the left hand side of the inequality may be negative. As for the necessity, we have to admit that $(H10)_{\frac{1}{2}}$ is stronger than \cite[Assumption 3.2]{WEI1} for the linear case. But our $(H10)_p$ can be used for higher moment estimates of quasi-derivatives. In fact, if we take $f=cu$, $\sigma=x$ and $b=b_1x$, where $c$ and $b_1$ are constants, then as \cite[Remark 3.2]{WEI1}, the condition $1+2b_1\leq c$ is necessary for $u$ having Lipschitz continuous derivatives. However, when $p=\frac{1}{2}$, the conditions $(H4)$, $(H5)$, $(H7)$ and $(H10)_{\frac{1}{2}}$ imply $2\beta>-c$ and $1+2b_1<c$, which are stronger than the necessary condition in \cite[Assumption 3.2]{WEI1}.

\end{Remark}

Note that throughout the paper constants $K$ and $N$ may differ in different inequalities.

\subsection{BSDEs in Random Durations Revisited}\label{sec:bsde}
BSDEs with random terminal times have been studied by Peng \cite{PEN}, Darling and Pardoux \cite{DAR}, and Briand and Hu \cite{BRI}. See also Yong and Zhou \cite[page 360]{YON} for a relevant exposition and related references therein. In this subsection, we give some priori estimates for the solutions of BSDEs and represent the solutions of a system of second order semi-linear elliptic PDEs through BSDEs.

We consider the It\^{o} stochastic equation
\begin{equation}\label{eq:SDE2}
 X_t=x+\int_0^t\sigma(X_s)\, dW_s+\int_0^tb(X_s)\, ds, \quad t\ge 0.
\end{equation}
In view of $(H1)$, it has a unique solution $\{X_t, t\ge 0\}$ for any $x\in D$.
  We have the following four inequalities: $E\tau(x)<\infty$ (from $(H2)$ and Lemma \ref{le:stopping}), $|f(\cdot,0,0)|<\infty$ (from $(H4)$), $|g|_0<\infty$ (from $(H6)_0$) and $\vartheta<0$ (from $(H7)$), all of which yield  the following key assumption of \cite[Theorem 3.4]{DAR}: for some $\varrho\in(\vartheta,0)$,
\begin{equation}\label{eq:estin1_in_remark}
 E_{x}\left[ e^{2\varrho \tau}\left|g(X_\tau)\right|^2+\int_0^\tau e^{2\varrho s}|f(X_s,0,0)|^2ds \right]<\infty, \quad \forall x\in D.
\end{equation}
Therefore,  according to \cite[Theorem 3.4]{DAR}, the following BSDE
\begin{equation}\label{eq:BSDE2}
 \left\{
  \begin{aligned}
    dY_t=&-f(X_t,Y_t,Z_t)dt+Z_tdW_t, \quad t\in [0,\tau);\\
    Y_{\tau}=&g(X_{\tau})
  \end{aligned}
 \right.
\end{equation}
 has a unique pair $(Y_\cdot,Z_\cdot)\in \mathcal{M}_\vartheta(0,\tau; \mathbb{R}^k\times\mathbb{R}^{k\times d_1})$ if $(H2)$, $(H4)$, $(H5)$, $(H6)_0$ and $(H7)$ are all satisfied.

\begin{Lemma}\label{le:BSDE}\cite[Proposition 4.3 and Corollary 4.4.1]{DAR}
  Assume that $f$ is Lipschitz with respect to $(y,z)$ (also made in $(H4)$). Let the assumption $(H5)$ and the inequality \eqref{eq:estin1_in_remark} be satisfied for some $\varrho>\vartheta$. Then BSDE (\ref{eq:BSDE2}) admits a unique adapted solution $(Y_\cdot,Z_\cdot)\in \mathcal{M}_\vartheta(0,\tau; \mathbb{R}^k\times\mathbb{R}^{k\times d_1})$. Moreover, there exists a constant $K>0$ such that
 \begin{equation*}
   \|(Y_\cdot,Z_\cdot)\|^2_{\mathcal{M}_\vartheta [0,\tau]}\leq KE\left[\left|g(X_\tau)\right|^2e^{2\gamma\tau}\right]+KE\int_0^\tau e^{2\gamma s}\left|f(X_s,0,0)\right|^2ds<\infty,
 \end{equation*}
 and for any $p\geq 2$,
 \begin{equation}\label{eq:estin10}
   E\left[\sup\limits_{t\in[0,\tau]}|Y_t|^p\right]+E\int_0^\tau|Y_s|^pds+E\left[\left(\int_0^\tau \|Z_s\|^2ds\right)^\frac{p}{2}\right] \le K(|g|_0^p+|f(\cdot,0,0)|_0^p).
 \end{equation}
\end{Lemma}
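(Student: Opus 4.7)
This is Proposition 4.3 and Corollary 4.4.1 of Darling and Pardoux \cite{DAR}; my plan is to reproduce their three steps: (i) uniqueness together with the weighted $\mathcal{M}_\vartheta$-bound, (ii) existence via a contraction, and (iii) the $L^p$-estimate \eqref{eq:estin10} for $p\geq 2$.

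\textbf{Steps (i) and (ii).} For uniqueness, take two solutions $(Y,Z)$, $(Y',Z')$ and apply It\^{o}'s formula to $e^{2\beta s}|Y_s-Y'_s|^2$ on the truncated interval $[t\wedge\tau,\,\tau\wedge n]$. The drift contribution is $-2\beta e^{2\beta s}|Y-Y'|^2+2e^{2\beta s}\langle Y-Y',\,f(X,Y,Z)-f(X,Y',Z')\rangle - e^{2\beta s}\|Z-Z'\|^2$. Splitting the cross term by inserting $\pm f(X,Y',Z)$ and using $(H5)$ and $(H4)$ bounds it above by $-\mu|Y-Y'|^2+L_0|Y-Y'|\|Z-Z'\|$; a Young split of the mixed product as $\tfrac{L_0^2}{2\varepsilon}|Y-Y'|^2+\tfrac{\varepsilon}{2}\|Z-Z'\|^2$ with $\varepsilon\in(0,1)$ chosen so that $(H7)$ forces both $-2\beta-2\mu+L_0^2/\varepsilon<0$ and $\varepsilon-1<0$ renders the entire drift nonpositive. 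Taking expectations kills the local martingale at $\tau\wedge n$; passing $n\to\infty$ using $E\tau<\infty$ together with \eqref{eq:estin1_in_remark} yields uniqueness, and the same bound applied with $(Y',Z')\equiv 0$ treating $f(X,0,0)$ as an inhomogeneity produces the claimed $\mathcal{M}_\vartheta$-norm estimate. For existence, set $\Phi:\mathcal{M}_\beta\to\mathcal{M}_\beta$ sending $(U,V)$ to the pair $(Y,Z)$ solving the BSDE with frozen driver $f(X_s,U_s,V_s)$; that frozen-driver BSDE is solved by applying the martingale representation theorem to $M_t=E[g(X_\tau)+\int_0^\tau f(X_s,U_s,V_s)\,ds\mid\mathcal{F}_t]$ after truncation at $\tau\wedge n$ and passage to the limit via \eqref{eq:estin1_in_remark}. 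The same It\^{o} computation shows $\Phi$ is a strict contraction, producing the unique fixed point.

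\textbf{Step (iii).} For $p\geq 2$, apply It\^{o} to the regularised function $(|Y_t|^2+\varepsilon)^{p/2}$ on $[0,\tau\wedge n]$, dominate the driver contribution by the monotonicity and Lipschitz splits of Step (i), and absorb the $\|Z\|^2$ piece of the Young split into the nonnegative quadratic-variation term $\tfrac{p}{2}(|Y|^2+\varepsilon)^{p/2-2}\bigl[(p-2)|Y|^2+(|Y|^2+\varepsilon)\bigr]\|Z\|^2$ arising from It\^{o}'s second-order correction. Taking $\sup_{t\in[0,\tau\wedge n]}$, applying Burkholder-Davis-Gundy to the martingale part, then a final Young inequality to absorb $E\sup|Y|^p$ back to the left, and finally sending $\varepsilon\downarrow 0$ and $n\to\infty$, produces \eqref{eq:estin10}. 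The $E(\int_0^\tau\|Z\|^2\,ds)^{p/2}$ piece falls out of the same computation by another application of BDG to the martingale.

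\textbf{Main obstacle.} The delicate point is the interplay between the random horizon $\tau$ and the constants in $(H7)$: the weight $\beta$ must simultaneously satisfy $-2\beta-2\mu+L_0^2/\varepsilon<0$ for some $\varepsilon\in(0,1)$ (so the Young absorption closes in the It\^{o} estimate) \emph{and} $\beta>\vartheta$ (so that \eqref{eq:estin1_in_remark} controls the limit $n\to\infty$). The strict inequalities $-\mu+2L_0^2<2\beta<0$ with $2\vartheta=-2\mu+L_0^2$ built into $(H7)$ leave exactly enough slack for both constraints; without that strictness neither the contraction in Step (ii) nor the absorption in Step (iii) would close, and the truncation limit in Step (i) would fail.
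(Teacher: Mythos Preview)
The paper gives no proof of this lemma: it is stated purely as a citation of Proposition~4.3 and Corollary~4.4.1 in Darling and Pardoux~\cite{DAR}, with no argument supplied. Your sketch correctly reconstructs the standard proof from that reference --- the weighted It\^{o} estimate on $e^{2\beta s}|Y_s-Y'_s|^2$ using monotonicity $(H5)$ and a Young split governed by the strict inequalities in $(H7)$, existence by contraction on the frozen-driver map, and the $L^p$ bound via It\^{o} on $(|Y|^2+\varepsilon)^{p/2}$ together with BDG --- so there is nothing to compare beyond noting that your outline matches the cited source. One minor remark: the weight parameter you call $\beta$ throughout your sketch plays the role of the generic $\varrho\in(\vartheta,0)$ in the lemma's hypothesis, not the specific $\beta$ fixed by $(H7)$ elsewhere in the paper; keeping the two symbols distinct would avoid confusion when the argument is read alongside the rest of Section~\ref{sec:ass}.
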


\begin{Remark}
As was shown in \cite{BRI} for the one-dimensional case of BSDE \eqref{eq:BSDE2}, there is a unique solution $(Y_\cdot,Z_\cdot)\in \mathcal{M}_{-\mu}(0,\tau; \mathbb{R}^k\times\mathbb{R}^{k\times d_1})$ without the `structural' conditions  on the coefficient $f$ in $(H7)$,  which links the constant $\mu$ of monotonicity to the Lipschitz constant $L_0$ of $f$ in $z$.
\end{Remark}
\begin{Lemma}\label{le:BSDE3}\cite[Lemma 6.2 and Proposition 6.3]{DAR}
 Under the assumptions of Lemma \ref{le:BSDE}, if we set $u(x):=Y_0(x)$ for $x\in\overline{D}$, then we have $Y_t=u(X_t)$ for $0\leq t\leq \tau$ and $u$ is bounded and continuous in $\overline{D}$.
\end{Lemma}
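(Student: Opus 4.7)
The plan is to prove the two assertions separately. The flow identity $Y_t=u(X_t)$ on $[0,\tau]$ is a consequence of the strong Markov property of (\ref{eq:SDE2}) together with the uniqueness half of Lemma \ref{le:BSDE}, while the regularity of $u$ splits into boundedness (essentially free from (\ref{eq:estin10})) and continuity (the substantive point).

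For the identity, I fix $x\in D$ and $t\ge0$ and, on $\{t<\tau(x)\}$, introduce the shifts $\widetilde X_s:=X_{t+s}$, $\widetilde W_s:=W_{t+s}-W_t$, $\widetilde Y_s:=Y_{t+s}$, $\widetilde Z_s:=Z_{t+s}$ for $s\in[0,\tau-t]$. The strong Markov property shows that $\widetilde X$ solves the same SDE driven by the Brownian motion $\widetilde W$ with initial datum $X_t$, and that $\tau-t$ is exactly its first exit time from $D$. Hence, conditional on $\mathcal F_t$, $(\widetilde Y,\widetilde Z)$ solves the BSDE (\ref{eq:BSDE2}) associated with the starting point $X_t$, so the uniqueness statement of Lemma \ref{le:BSDE} forces $\widetilde Y_0=u(X_t)$, which is $Y_t=u(X_t)$. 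Boundedness of $u$ is then immediate: since $u(x)=Y_0(x)$, estimate (\ref{eq:estin10}) with $p=2$ gives $|u(x)|^2\le E[\sup_{t\in[0,\tau]}|Y_t|^2]\le K(|g|_0^2+|f(\cdot,0,0)|_0^2)$ uniformly in $x\in\overline D$.

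For interior continuity I would run a coupling argument: for $x,x'\in D$, drive the two forward SDEs by the same Brownian motion to produce $X^x,X^{x'}$ with exit times $\tau(x),\tau(x')$. Applying the standard BSDE a priori estimate underlying Lemma \ref{le:BSDE} to the pair of BSDEs, stopped at $\tau(x)\wedge\tau(x')$ and then corrected over the gap $[\tau(x)\wedge\tau(x'),\tau(x)\vee\tau(x')]$, one controls $|u(x)-u(x')|^2$ by a weighted expectation of $|f(X^x_\cdot,\cdot,\cdot)-f(X^{x'}_\cdot,\cdot,\cdot)|^2$, of $|g(X^x_{\tau(x)})-g(X^{x'}_{\tau(x')})|^2$, and of a boundary-layer term encoding the exit-time mismatch. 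SDE stability together with the Lipschitz regularity of $f$ and $g$ dispatches the first two contributions, while the boundary-layer term is absorbed using the pathwise continuity of $X$ and the $L^1$-moment estimate on $\tau$ supplied by (H2); the exponential weight $e^{2\vartheta s}$, $\vartheta<0$, from (H7) is what keeps everything finite despite the unboundedness of $\tau$. For continuity at a boundary point $x_0\in\partial D$, the non-degeneracy condition (H9) along the inward normal forces $\tau(x)\to0$ in probability as $x\to x_0$, so $X^x_{\tau(x)}\to x_0$ in probability and the Lipschitz continuity of $g$ gives $u(x)\to g(x_0)$.

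The main obstacle is the coupling of the exit times: the two BSDEs live on different random intervals and the naive a priori estimate breaks down on the symmetric difference of $[0,\tau(x)]$ and $[0,\tau(x')]$. Quantifying this gap via the barrier $\psi$ from (H2) (which, through $\cL\psi\le-1$, yields the moment bounds on $|\tau(x)-\tau(x')|$ needed to close the argument) together with the dissipative structure (H5)--(H7) is where the analytical content is concentrated; everything else reduces to routine random-duration BSDE a priori estimates.
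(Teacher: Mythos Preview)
The paper does not supply its own proof of this lemma; it simply cites \cite[Lemma 6.2 and Proposition 6.3]{DAR}. Your sketch is in the spirit of Darling--Pardoux's argument: the flow identity via the strong Markov property plus BSDE uniqueness, boundedness from (\ref{eq:estin10}), and continuity from a stability/coupling argument for the forward--backward system.

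One concrete correction: you invoke $(H9)$ to obtain $\tau(x)\to 0$ in probability as $x\to x_0\in\partial D$, but $(H9)$ is \emph{not} among the hypotheses of Lemma~\ref{le:BSDE} (and hence of Lemma~\ref{le:BSDE3}). What you actually have available is $(H2)$, which already does the job: from $\cL\psi\le -1$ and Lemma~\ref{le:stopping} one gets $E[\tau(x)]\le\psi(x)$, and since $\psi$ is continuous with $\psi(x_0)=0$ on $\partial D$, this yields $\tau(x)\to 0$ in $L^1$ (hence in probability) as $x\to x_0$. The same barrier $\psi$ is what underlies your control of the exit-time mismatch in the interior argument, so the dependence on $(H9)$ can and should be removed throughout. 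With that fix, your outline matches the cited source.
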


\subsection{Introduction of Quasi-derivative}\label{sec:intro}
Conventionally, to obtain the gradient and Hessian estimates of $u$ in a probabilistic approach, we differentiate formula (\ref{eq:Sol1}) with respect to $x$. For the Dirichlet problem of elliptic equations in a domain, a crucial  trouble is that the first exit time $\tau=\tau(x)$ is not necessarily continuous (let alone the differentiability). To overcome this difficulty, we introduce the so-called first and second quasi-derivatives of $X_t$ with respect to $x$ along the vector $\xi_0$ and $\eta_0$ respectively.

 \begin{Definition}
 Let $x\in D$, $\xi_0\in\mathbb{R}^d$, and $(\xi_t, \xi_t^0)$ be adapted continuous processes defined on $[0,\tau]$ taking values in $\mathbb{R}^d\times \mathbb{R}$ such that $\xi_t|_{t=0}=\xi_0$. We call $\xi_t$ a first quasi-derivative of $X_t$ along the direction $\xi_0$ at point $x$ if the following process
 \begin{displaymath}
  v_{(\xi_t)}(X_t)+\xi_t^0 v(X_t), \quad 0\le t\le \tau
 \end{displaymath}
 is a local martingale for any $v\in\mathbb{M}$, and the associated process $\xi_t^0$ is called a first adjoint process of $\xi_t$.

 Additionally, let $\eta_0\in\mathbb{R}^d$, and $(\eta_t, \eta_t^0)$ be adapted continuous processes defined on $[0,\tau]$ taking values in $\mathbb{R}^d\times \mathbb{R}$ such that $\eta_t|_{t=0}=\eta_0$. We call $\eta_t$ a second quasi-derivative of $X_t$ associate with $\xi_t$ and $\xi_t^0$ along the direction of $\eta_0$ at point $x$ if the following process
 \begin{displaymath}
  v_{(\xi_t)(\xi_t)}(X_t)+v_{(\eta_t)}(X_t)+2\xi_t^0v_{(\xi_t)}(X_t)+\eta_t^0v(X_t), \quad 0\leq t\leq \tau
 \end{displaymath}
 is a local martingale for any $v\in\mathbb{M}^2$, and the associated process $\eta_t^0$ is called a second adjoint process of $\eta_t$.
 \end{Definition}

The notion of the first quasi-derivative can be found in \cite[Definition 2.2]{KRY4} and \cite[Definition 3.1.1]{KRY8}, while the notion of the second quasi-derivative can be found in \cite[Definition 2.3]{WEI1}. The following  examples of the first quasi-derivative $\xi_t$ and the second quasi-derivative $\eta_t$ can be found in \cite[Theorem 2.1]{WEI1}.

\begin{Lemma}\label{le:Qua}
 Let the scalar processes  $r_t$ and $\tilde{r}_t$, the $\mathbb{R}^{d_1}$-valued processes $\pi_t$ and $\tilde\pi_t$, and the $\mathfrak{B}$-valued processes $P_t$ and $\tilde P_t$ be all progressively measurable such that for any finite positive time $T$,
 \begin{equation}\label{ass:co1}
  \int_0^T\left(|(r_t, \pi_t, P_t)|^4+|(\tilde r_t, \tilde\pi_t, \tilde P_t)|^2\right)dt< \infty.
 \end{equation}
For $x \in D$ and $(\xi_0, \eta_0)\in \mathbb{R}^d\times \mathbb{R}^d$, denote by the processes $\xi_t$ and $\eta_t$  solutions of the following linear SDEs: for $t\in [0,\infty),$
 \begin{eqnarray}
  \xi_t &=&\ds \xi_0+\int_0^t[\sigma_{(\xi_s)}+r_s\sigma+\sigma P_s]\, dW_s+\int_0^t[b_{(\xi_s)}+2r_sb-\sigma\pi_s]\, ds,  \label{eq:Qua1}\\
  \eta_t &=&\ds \eta_0+\int_0^t\!\!\![\sigma_{(\eta_s)}+\tilde r_s\sigma+\sigma\tilde P_s+\sigma_{(\xi_s)(\xi_s)}+2r_s\sigma_{(\xi_s)}-r_s^2\sigma+2\sigma_{(\xi_s)}P_s+2r_s\sigma P_s+\sigma P_s^2]\, dW_s\nonumber\\
  & &\ds+\int_0^t\!\!\![b_{(\eta_s)}+2\tilde r_s b-\sigma\tilde\pi_s+b_{(\xi_s)(\xi_s)}+4r_s b_{(\xi_s)}-2\sigma_{(\xi_s)}\pi_s-2r_s\sigma\pi_s-2\sigma P_s\pi_s]\,ds,\nonumber\\
   \label{eq:Qua3}
 \end{eqnarray}
 where in $\sigma, b$ and their derivatives we have dropped the argument $X_s$.
 The processes $\xi_t^0$ and $\eta_t^0$ can be taken to be
 \begin{eqnarray}
   \xi_t^0 &=& \int_0^t \pi_sdW_s, \\
   \eta_t^0 &=& (\xi_t^0)^2-\langle\xi^0\rangle_t+\int_0^t \tilde\pi_sdW_s.
 \end{eqnarray}
 Then $\xi_t$ is a first quasi-derivative of $X_t$ along the direction of $\xi_0$ at $x$ and $\xi_t^0$ is a first adjoint process for $\xi_t$, and $\eta_t$ is a second quasi-derivative of $X_t$ associated with $\xi_t$ along the direction of $\eta_0$ at $x$ and $\eta_t^0$ is a second adjoint process for $\eta_t$.
\end{Lemma}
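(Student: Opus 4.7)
The plan is to verify both local-martingale assertions by direct application of It\^{o}'s formula. The only inputs are (a) that any $v\in\mathbb{M}$ has $\mathcal{L}v\equiv 0$ on $D$---indeed, $v(X_t)$ is a local martingale and $v$ is $C^2$, so by It\^{o} the drift $\int_0^t\mathcal{L}v(X_s)\,ds$ must vanish, and letting $x$ range over $D$ gives $\mathcal{L}v\equiv 0$; (b) the first- and second-order commutator identities obtained by differentiating $\mathcal{L}v=0$ once or twice along a fixed direction, which relate $\mathcal{L}v_{(y)}$ and $\mathcal{L}v_{(y)(y)}$ to lower-order derivatives of $v$ contracted with derivatives of $\sigma,b$; and (c) the integrability hypothesis \eqref{ass:co1}, which is the minimal one guaranteeing that all stochastic integrals produced below are genuine local martingales (the quartic moment on $(r_s,\pi_s,P_s)$ is needed because the diffusion of $\eta_t$ contains quadratic products such as $r_s^2\sigma$ and $\sigma P_s^2$).

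For the first quasi-derivative, I would view $v_{(\xi_t)}(X_t)=\Phi(X_t,\xi_t)$ with $\Phi(x,y):=\langle v_x(x),y\rangle$ and apply the joint It\^{o} formula to $(X_t,\xi_t)$, then add $d(\xi_t^0 v(X_t))$ via the product rule. The resulting $dt$-part collects: (i) $\mathcal{L}v_{(\xi_t)}(X_t)$ with $\xi_t$ frozen; (ii) the cross quadratic variation $\mathrm{tr}\bigl(v_{xx}\sigma[\sigma_{(\xi)}+r_s\sigma+\sigma P_s]^{*}\bigr)$; (iii) $\langle v_x,\,b_{(\xi)}+2r_sb-\sigma\pi_s\rangle$; (iv) $\langle v_x,\sigma\pi_s\rangle$ from the bracket of $v(X_t)$ and $\xi_t^0$; and $\xi_t^0\mathcal{L}v(X_t)=0$. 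The first-order commutator identity absorbs the $\sigma_{(\xi)}$ and $b_{(\xi)}$ contributions into $\mathcal{L}v_{(\xi_t)}(X_t)=0$; the $r_s\sigma$ cross-variation pairs with the $2r_sb$ drift via $\mathcal{L}v=0$; (iii) and (iv) cancel each other through $-\sigma\pi_s$ versus $+\sigma\pi_s$; and the $\sigma P_s$ contribution vanishes because $\sigma P_s\sigma^{*}$ is skew (as $P_s\in\mathfrak{B}$) while $v_{xx}$ is symmetric, so the corresponding trace is zero. The total drift vanishes, so the process is a local martingale.

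The second quasi-derivative case is treated in the same fashion but with much heavier bookkeeping, and this is the main obstacle. One applies It\^{o} and the product rule to the full expression $v_{(\xi_t)(\xi_t)}(X_t)+v_{(\eta_t)}(X_t)+2\xi_t^0 v_{(\xi_t)}(X_t)+\eta_t^0 v(X_t)$; the $dt$-part now involves cross variations of derivatives of $v$ up to third order (and second-order derivatives of $\sigma,b$), as well as the bracket term $d\langle\xi^0\rangle_t=|\pi_s|^2\,dt$ entering through $(\xi_t^0)^2$ inside $\eta_t^0$. The additional terms appearing in the diffusion and drift of $\eta_t$ in \eqref{eq:Qua3}---namely $\sigma_{(\xi)(\xi)},\,2r_s\sigma_{(\xi)},\,-r_s^2\sigma,\,2\sigma_{(\xi)}P_s,\,2r_s\sigma P_s,\,\sigma P_s^2$ and the corresponding drift pieces $b_{(\xi)(\xi)},\,4r_s b_{(\xi)},\,-2\sigma_{(\xi)}\pi_s,\,-2r_s\sigma\pi_s,\,-2\sigma P_s\pi_s$---are precisely those required to cancel the extra contributions once the second-order commutator identity is invoked, while the $-\langle\xi^0\rangle_t$ correction built into $\eta_t^0$ absorbs the $|\pi_s|^2$ bracket of $(\xi_t^0)^2$ and the integral $\int\tilde\pi_s\,dW_s$ plays the role that $\xi_t^0$ played previously. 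The organising principles are (a) group all terms by which derivative order of $v$ they contain, (b) use the symmetry of $v_{xx}$ and the skew-symmetry of $P_s,\tilde P_s$ to annihilate all mixed traces, and (c) apply the commutator identities obtained from $\mathcal{L}v=0$. This verification mirrors that of Zhou \cite[Theorem 2.1]{WEI1} and Krylov \cite[Section 3.1]{KRY8}, and we will follow the same scheme.
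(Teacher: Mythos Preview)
Your proposal is correct. The paper itself does not prove this lemma directly: it refers to Krylov \cite[Lemma~3.1]{KRY4} and Zhou \cite[Theorem~2.1]{WEI1} for the computation and then adds a one-paragraph conceptual gloss---the auxiliary scalar $r_t$ arises from an infinitesimal random time-change, $\pi_t$ from a Girsanov change of measure, and the skew-symmetric $P_t$ from an orthogonal rotation of the driving Wiener process; each of these transformations preserves the local-martingale property of $v(X_t)$ for $v\in\mathbb{M}$, and quasi-derivatives are additive, so one may superpose the three effects (and similarly for $\tilde r_t,\tilde\pi_t,\tilde P_t$ at second order). Your direct It\^{o}-plus-commutator verification is precisely the calculation underlying those cited references, so in substance the two routes agree. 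The transformation viewpoint has the advantage of explaining the otherwise ad~hoc coefficients in \eqref{eq:Qua1}--\eqref{eq:Qua3} (e.g.\ why $2r_sb$ rather than $r_sb$, and why $\sigma P_s$ contributes nothing to the drift); your hands-on route is more self-contained and isolates exactly which algebraic facts---the skewness of $P_s$ against the symmetry of $v_{xx}$, and the differentiated identity $\mathcal{L}v=0$---actually do the work.
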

The proof of Lemma \ref{le:Qua} can be found in \cite[Lemma 3.1]{KRY4} and \cite[Theorem 2.1]{WEI1}. Indeed the auxiliary process $(r_t, \tilde r_t)$ relates with a time-change. The process $(\pi_t,\tilde\pi_t)$ relates with a measure-transformation via Girsanov's theorem, and the processes $(P_t,\tilde P_t)$ relates with a rotation of the driving Wiener process. Since all these transformations preserve the property of the local martingale and quasi-derivatives have additivity, we can easily arrive at the above results.

We also find that the quasi-derivatives $\xi_t$ and $\eta_t$ enjoy some freedom due to the presence of these auxiliary processes. Hence, sometimes we can turn the quasi-derivatives in such a way that they become tangent to the boundary when and where $X_t$ hit it (see examples in \cite[page 54-58]{KRY8}). In this case, it remains for us to estimate the moments of the quasi-derivatives, since the directional derivatives of $u$ along the quasi-derivatives on the boundary coincide with that of the boundary data $g$.


Let $\delta$ be a small positive constant. Consider the following forward-backward stochastic differential equation (FBSDE)
\begin{small}\begin{equation}\label{eq:FBSDE1}
 \left\{
  \begin{array}{rcl}
    dX_t^\delta&=&\displaystyle \left[\left(1+2\delta r_t+\delta^2\tilde r_t\right)b(X_t^\delta)-(1+2\delta r_t+\delta^2\tilde r_t)^\sq\sigma(X_t^\delta)\left(\delta\pi_t+\frac{1}{2}\delta^2\tilde\pi_t\right)e^{\delta P_t}e^{\frac{1}{2}\delta^2\tilde P_t}\right]dt\\[4mm]
    &&\displaystyle +(1+2\delta r_t+\delta^2\tilde r_t)^\sq\sigma(X_t^\delta)e^{\delta P_t}e^{\frac{1}{2}\delta^2\tilde P_t}dW_t,\quad t\in [0,\tau^\delta];\\[4mm]
   dY_t^\delta&=&\displaystyle\left[-f(X_t^\delta,Y_t^\delta,Z_t^\delta)\left(1+2\delta r_t+\delta^2\tilde r_t\right)-\tilde Z_t^\delta\left(\delta\pi_t+\frac{1}{2}\delta^2\tilde\pi_t\right)\right]dt+\tilde Z_t^\delta\, dW_t,\quad t\in [0,\tau^\delta],\\[4mm]
   \tilde Z_t^\delta&:=&\displaystyle Z_t^\delta(1+2\delta r_t+\delta^2\tilde r_t)^\sq e^{\delta P_t}e^{\frac{1}{2}\delta^2\tilde P_t}; \quad\quad
   X_0^\delta=\displaystyle x+\delta\xi_0+\frac{1}{2}\delta^2\eta_0, \quad
   Y_{\tau^\delta}^\delta=\displaystyle g(X^\delta_{\tau^\delta}),
  \end{array}
 \right.
\end{equation}\end{small}
where $\tau^\delta$ is the first exit time of $X_t^\delta$ from $D$.

\begin{Remark} (i) As $e^{\delta P_t}$ is an orthogonal matrix,
 $d\tilde W_t=\int_0^t e^{\delta P_s} d W_s$
is a Wiener process for any $\delta$.   (ii) Note that $(X^0, Y^0, Z^0)=(X, Y, Z)$ is the solution of (\ref{eq:SDE2}) and (\ref{eq:BSDE2}). Therefore, $(X^\delta, Y^\delta, Z^\delta)$ is a perturbation to $(X, Y, Z)$.
\end{Remark}

\begin{Lemma}\label{le:BSDE2}
 Let $(H1)$-$(H5)$, $(H6)_0$ and $(H7)$ be satisfied. Without loss of generality, we may assume the coefficients $(r_t, \pi_t, P_t)$ and $(\tilde r_t, \tilde\pi_t, \tilde P_t)$ be bounded if condition (\ref{ass:co1}) is satisfied. Then, there is a sufficiently small $\delta$ such that (see \cite[page 520]{KRY1})
 $$0\leq1+2\delta r_t+\delta^2\tilde r_t \leq 2,\quad |\delta\pi_t|+{1\over2}\delta^2|\tilde\pi_t|\leq 1, \quad \exp\left(\delta P_t+{1\over2}\delta^2\tilde P_t\right)\leq 2,$$
  and FBSDE (\ref{eq:FBSDE1}) has a unique solution $(X^\delta, Y^\delta, Z^\delta)$.

 Moreover, when $\{X_t(x), t\geq 0\}$ is a unique solution of (\ref{eq:SDE2}), and $\{(\xi_t(\xi_0), \eta_t(\eta_0)), t\geq 0\}$ is a unique solution of (\ref{eq:Qua1}) and (\ref{eq:Qua3}), for $p\geq 2$, $T\geq 1$, and $(x, \xi_0, \eta_0)\in D\times\mathbb{R}^d\times\mathbb{R}^d$, we have (see \cite[Theorems 3.1 and 3.2]{WEI2}),
 \begin{equation}\label{eq:estin11}
  \lim\limits_{\delta\to 0}E\left[\sup\limits_{0\leq t\leq \tau^\delta\wedge\tau\wedge T}\left|\frac{X_t^\delta(x+\delta\xi_0)-X_t(x)}{\delta}-\xi_t\right|^p\right]=0,
 \end{equation}
 and
 \begin{equation}\label{eq:estin12}
  \lim\limits_{\delta\to 0}E\left[\sup\limits_{0\leq t\leq \tau^\delta\wedge\tau^{-\delta}\wedge\tau\wedge T}\left|\frac{X_t^\delta(x+\delta\xi_0+\frac{1}{2}\delta^2\eta_0)-2X_t(x)+X_t^{-\delta}(x-\delta\xi_0+\frac{1}{2}\delta^2\eta_0)}{\delta^2}-\eta_t\right|^p\right]=0.
 \end{equation}
\end{Lemma}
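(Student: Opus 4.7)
The statement has three components: the pointwise bounds for the perturbation factors, well-posedness of the FBSDE (\ref{eq:FBSDE1}), and the limits (\ref{eq:estin11})--(\ref{eq:estin12}). I would attack them in that order.

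\emph{Pointwise bounds and the ``without loss of generality'' reduction.} If only condition (\ref{ass:co1}) is assumed, I would truncate each of the six processes $(r_t,\pi_t,P_t,\tilde r_t,\tilde\pi_t,\tilde P_t)$ at a level $N$, obtain the claim for the truncated versions, and then let $N\to\infty$, using dominated convergence and stability of SDE/BSDE solutions under $L^4$/$L^2$ perturbations of the coefficients. Once the six processes are uniformly bounded by a constant $K$, the three inequalities are elementary: for instance, $1+2\delta r_t+\delta^2\tilde r_t$ lies in $[1-2\delta K-\delta^2 K,\,1+2\delta K+\delta^2 K]\subset[0,2]$ as soon as $\delta\le \delta_0(K)$, and the remaining two are handled analogously using $\|e^A\|\le e^{\|A\|}$ for the matrix exponential.

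\emph{Well-posedness of the FBSDE.} For the forward SDE, observe that under $(H1)$--$(H3)$ the coefficients $\sigma,b$ are globally Lipschitz on $\mathbb{R}^d$, and the perturbation factors are bounded progressively measurable processes, so the modified drift and diffusion are Lipschitz in $X^\delta$ with a deterministic constant. Standard Itô theory then yields a unique strong solution $X^\delta$ with finite $p$-th moments on compacts. To see that $E\tau^\delta(x)<\infty$, I would apply Itô's formula to $\psi(X^\delta_t)$ using the barrier from $(H2)$: the generator of $X^\delta$ applied to $\psi$ is a perturbation of $\cL\psi\le -1$ by terms that are $O(\delta)$ uniformly, so for $\delta$ small enough the resulting drift on $\psi(X^\delta)$ is still bounded above by $-\tfrac{1}{2}$, yielding $E\tau^\delta<\infty$ by Lemma~\ref{le:stopping}. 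Next, rewrite the backward equation as a standard BSDE in $(Y^\delta,Z^\delta)$: the driver
\[
\tilde f^\delta(t,y,z):=-f\bigl(X^\delta_t,y,z(1+2\delta r_t+\delta^2\tilde r_t)^{1/2}e^{\delta P_t}e^{\frac{1}{2}\delta^2\tilde P_t}\bigr)(1+2\delta r_t+\delta^2\tilde r_t)-z(\cdots)\bigl(\delta\pi_t+\tfrac{1}{2}\delta^2\tilde\pi_t\bigr)
\]
inherits Lipschitz continuity in $(y,z)$ from $(H4)$ with a uniformly bounded constant, and monotonicity in $y$ from $(H5)$ (since the scalar factor $1+2\delta r_t+\delta^2\tilde r_t\ge0$). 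The integrability (\ref{eq:estin1_in_remark}) with $\tau^\delta$ in place of $\tau$ is routine from $|g|_0<\infty$, $|f(\cdot,0,0)|_0<\infty$ and $E\tau^\delta<\infty$. An application of Lemma~\ref{le:BSDE} produces the unique adapted pair $(Y^\delta,Z^\delta)\in\mathcal{M}_\vartheta(0,\tau^\delta;\mathbb{R}^k\times\mathbb{R}^{k\times d_1})$.

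\emph{Limits of the finite-difference quotients.} The two displays (\ref{eq:estin11}) and (\ref{eq:estin12}) involve only the forward processes and are precisely \cite[Theorems 3.1 and 3.2]{WEI2}. I would cite those results, sketching the idea for the reader: a Taylor expansion in $\delta$ of the coefficients of $X^\delta$ shows that the first-order correction satisfies exactly (\ref{eq:Qua1}), so that $\Xi^\delta_t:=\delta^{-1}(X^\delta_t-X_t)-\xi_t$ solves an SDE whose coefficients tend to $0$ in $L^p$ uniformly on compacts; a stopped Gronwall argument on $[0,\tau^\delta\wedge\tau\wedge T]$ yields the first limit. The symmetric second-order finite difference $\delta^{-2}(X^\delta-2X+X^{-\delta})$ cancels all first-order terms, leaving the second-order correction that coincides with (\ref{eq:Qua3}), whence the same strategy gives the second limit.

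\emph{Expected main obstacle.} The delicate point is that the three exit times $\tau$, $\tau^\delta$, $\tau^{-\delta}$ are defined by different forward processes and are generally not continuous in $\delta$; this is precisely the difficulty motivating the quasi-derivative machinery. Taking the infimum $\tau^\delta\wedge\tau^{-\delta}\wedge\tau\wedge T$ in (\ref{eq:estin11})--(\ref{eq:estin12}) sidesteps the need for any such continuity, but the Gronwall estimates must be carried out on $\delta$-dependent intervals. The care needed to keep the constants in these estimates independent of $\delta$ (using the uniform bounds established in the first step) is the main technical burden, and is where the boundedness reduction in step~1 is essential.
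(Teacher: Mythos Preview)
Your proposal is correct and in fact more detailed than what the paper offers: the paper gives no proof for this lemma at all, simply citing \cite[page~520]{KRY1} for the pointwise bounds on the perturbation factors and \cite[Theorems~3.1 and~3.2]{WEI2} for the limits~(\ref{eq:estin11})--(\ref{eq:estin12}), and treating the well-posedness of the FBSDE as implicit in the standing assumptions together with Lemma~\ref{le:BSDE}. Your three-step outline---truncation plus elementary bounds, forward well-posedness via the barrier $\psi$ and then Lemma~\ref{le:BSDE} for the backward component, and finally invoking \cite{WEI2}---is exactly the argument the citations point to, so you are aligned with the paper's approach.

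One small remark on your BSDE step: when you pass from $(H5)$ to monotonicity of the perturbed driver, the effective monotonicity constant becomes $(1+2\delta r_t+\delta^2\tilde r_t)\mu$ and the effective Lipschitz constant in $z$ picks up both the rescaling by $(1+2\delta r_t+\delta^2\tilde r_t)^{1/2}e^{\delta P_t}e^{\frac12\delta^2\tilde P_t}$ and the additive $\tilde z(\delta\pi_t+\tfrac12\delta^2\tilde\pi_t)$ term. To apply Lemma~\ref{le:BSDE} you need the perturbed analogue of the strict inequality in $(H7)$; this does go through for $\delta$ small because under the boundedness reduction $1+2\delta r_t+\delta^2\tilde r_t$ is bounded below away from~$0$ (not merely nonnegative), so the perturbed $\mu,L_0$ are uniformly close to the original ones. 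You gesture at this but it is worth stating, since it is the one place where the FBSDE well-posedness could fail if only $1+2\delta r_t+\delta^2\tilde r_t\ge 0$ were available.
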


\begin{Lemma}\label{le:BSDE4}
 Under the assumptions of Lemma \ref{le:BSDE2}, we have
 \begin{displaymath}
  u(X_t^\delta)=Y_t^\delta, \quad t\in [0,\tau^\delta].
 \end{displaymath}
\end{Lemma}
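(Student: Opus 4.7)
The plan is to realize the perturbed forward-backward system $(X^\delta,Y^\delta,Z^\delta)$ as the standard system $(X,Y,Z)$ started at the shifted point $x+\delta\xi_0+\frac{1}{2}\delta^2\eta_0$, after a random time change, a Girsanov measure change, and an orthogonal rotation of the driving Brownian motion. Since Lemma \ref{le:BSDE3} gives $u(X_t)=Y_t$ along the standard system, this identity will transfer through the inverse transformations to give $u(X_t^\delta)=Y_t^\delta$.

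I will set $\alpha_t:=1+2\delta r_t+\delta^2\tilde r_t$, $\gamma_t:=\delta\pi_t+\frac{1}{2}\delta^2\tilde\pi_t$, and $O_t:=e^{\delta P_t}e^{\frac{1}{2}\delta^2\tilde P_t}$. By Lemma \ref{le:BSDE2}, for sufficiently small $\delta$, $\alpha_t$ is bounded away from $0$ and $\infty$, $|\gamma_t|\le 1$, and $O_t$ is a uniformly bounded orthogonal $d_1\times d_1$ matrix. Let $A_t:=\int_0^t\alpha_s\,ds$ and denote by $T_s$ its inverse. The continuous martingale $N_t:=\int_0^t\alpha_s^{1/2}O_s\,dW_s$ has bracket $A_tI$, hence $\hat N_s:=N_{T_s}$ is a $d_1$-dimensional Brownian motion with respect to $\mathcal{G}_s:=\mathcal{F}_{T_s}$. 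With $\hat\gamma_s:=\alpha_{T_s}^{-1/2}O_{T_s}\gamma_{T_s}$ bounded, Girsanov's theorem furnishes an equivalent probability $\mathbb{Q}$ under which $\bar W_s:=\hat N_s-\int_0^s\hat\gamma_u\,du$ is a standard Brownian motion.

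Next, time-changing $X^\delta$, a direct computation shows that $\hat X_s:=X^\delta_{T_s}$ satisfies the standard SDE under $\mathbb{Q}$,
\begin{equation*}
d\hat X_s=b(\hat X_s)\,ds+\sigma(\hat X_s)\,d\bar W_s,\qquad \hat X_0=x+\delta\xi_0+\tfrac{1}{2}\delta^2\eta_0,
\end{equation*}
with first exit time $\hat\tau=A_{\tau^\delta}$. Applying Lemma \ref{le:BSDE3} in the $\mathbb{Q}$-picture yields $u(\hat X_s)=\hat Y_s$ for $s\in[0,\hat\tau]$, where $(\hat Y,\hat Z)$ solves the standard BSDE driven by $\bar W$ with terminal value $g(\hat X_{\hat\tau})$. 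On the other hand, setting $\check Y_s:=Y^\delta_{T_s}$ and $\check Z_s:=Z^\delta_{T_s}$, and using the identity $\tilde Z^\delta_t=Z^\delta_t\alpha_t^{1/2}O_t$ together with $O_tO_t^{*}=I$, the time-change Jacobian cancels the $\alpha_t$-factor in the drift, the Girsanov drift $\hat\gamma_s$ exactly absorbs the $\tilde Z^\delta\gamma$ term, and one computes that
\begin{equation*}
d\check Y_s=-f(\hat X_s,\check Y_s,\check Z_s)\,ds+\check Z_s\,d\bar W_s,\qquad \check Y_{\hat\tau}=g(\hat X_{\hat\tau}).
\end{equation*}
Thus $(\check Y,\check Z)$ solves the same BSDE as $(\hat Y,\hat Z)$ under $\mathbb{Q}$; uniqueness (Lemma \ref{le:BSDE} on $(\Omega,\mathcal{G},\mathbb{Q})$) forces $\check Y_s=\hat Y_s=u(\hat X_s)$, and setting $s=A_t$ gives the claim.

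The main obstacle I anticipate is verifying that Lemma \ref{le:BSDE3} genuinely applies under the transformed measure $\mathbb{Q}$ with respect to the filtration $\mathcal{G}$: the key integrability condition \eqref{eq:estin1_in_remark} must be re-checked in the transformed picture. This is unproblematic because $\alpha$ is comparable to $1$ (so $E_{\mathbb{Q}}\hat\tau<\infty$ follows from $E\tau^\delta<\infty$), the Girsanov density has $L^p$ moments of every order thanks to the boundedness of $\hat\gamma$, and the hypotheses $|g|_0,\|f(\cdot)\|_{0,1}<\infty$ are unaffected by the change of measure. Once the measure- and time-change bookkeeping (in particular, compatibility with the exit time) is done carefully, the conclusion follows at once from the two uniqueness statements, one for the SDE and one for the BSDE.
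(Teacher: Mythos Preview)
Your argument is essentially correct and constitutes a genuinely different route from the paper's proof. The paper does not unwind the perturbation at all: it regularizes the coefficients so that the associated PDE has a classical solution $u^\epsilon$, applies It\^o's formula to $u^\epsilon(X_t^{\epsilon,\delta})$ to get $u^\epsilon(X_t^{\epsilon,\delta})=Y_t^{\epsilon,\delta}$ directly, and then passes to the limit $\epsilon\to 0$ using BSDE stability (\cite[Theorem~2.4]{BRI}) and stability of degenerate elliptic PDEs (\cite[Theorem~4.6.1]{KRY6}). Your approach instead exploits the very structure that motivates quasi-derivatives: the factors $\alpha_t$, $O_t$, $\gamma_t$ are precisely a time change, an orthogonal rotation, and a Girsanov shift, and after composing their inverses the perturbed system becomes the standard one, so that Lemma~\ref{le:BSDE3} applies verbatim. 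This is more transparent conceptually and avoids any appeal to external stability theorems; the paper's route is shorter to write but imports heavier machinery.

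Two technical points you should make explicit. First, when you invoke Lemma~\ref{le:BSDE3} ``in the $\mathbb{Q}$-picture'' you are implicitly using that the deterministic function $u(x)=Y_0(x)$ depends only on the \emph{law} of the forward--backward system, hence is the same whether built on $(\Omega,\mathcal{F},\mathbb{P},W)$ or on $(\Omega,\mathcal{G},\mathbb{Q},\bar W)$; this follows from Yamada--Watanabe for \eqref{eq:SDE2} and pathwise uniqueness for \eqref{eq:BSDE2}, and deserves a sentence. Second, the filtration $\mathcal{G}_s=\mathcal{F}_{T_s}$ is in general strictly larger than the natural filtration of $\bar W$ (it carries the auxiliary processes $r,\pi,P,\ldots$). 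Your uniqueness step therefore needs: (i) $\bar W$ is a $\mathcal{G}$-Brownian motion under $\mathbb{Q}$ (true by L\'evy's characterization plus Girsanov), (ii) the $\mathcal{H}$-adapted pair $(\hat Y,\hat Z)$ obtained from Lemma~\ref{le:BSDE3} in the natural filtration $\mathcal{H}$ of $\bar W$ is automatically a solution in the larger filtration $\mathcal{G}$, and (iii) BSDE uniqueness in $\mathcal{M}_\vartheta$ holds in $\mathcal{G}$ (it does, since the uniqueness proof in Lemma~\ref{le:BSDE} uses only It\^o's formula and the monotonicity/Lipschitz conditions, not martingale representation). Once these are stated, your proof is complete.
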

\begin{proof}
 The proof is similar to that of Lemma \ref{le:BSDE3}. Thanks to It\^{o}'s formula, we get $u^\epsilon (X_t^{\epsilon,\delta})=Y_t^{\epsilon,\delta}$ by regularizing the equations with smooth coefficients. Then, the desired result is a consequence of the stability of BSDEs provided in \cite[Theorem 2.4]{BRI} and the stability of weak solutions of degenerate elliptic PDEs in \cite[Theorem 4.6.1]{KRY6}.
\end{proof}

%
%
%
%

Our main result is stated in the following theorem.

\begin{Theorem}\label{thm:estimate}
 Let assumptions $(H1)$-$(H5)$, $(H7)$ and $(H9)$ be satisfied. Let $(X, Y)$ be the unique solution of (\ref{eq:SDE1}) and (\ref{eq:BSDE1}), and $u$ be defined by (\ref{eq:Sol1}). Then we have the following assertions.

 {\rm (i)} Under the assumptions $(H6)_0$ and $(H10)_1$, we have $u\in C^{0,1}_{loc}(D)\cap C(\overline{D})$, and for any $\xi_0\in \mathbb{R}^d$ and $a.e.\  x\in D$,
 \begin{equation}\label{eq:estimate1}
  |u_{(\xi_0)}(x)|\leq N\left(|\xi_0|+\frac{|\psi_{(\xi_0)}(x)|}{\psi^{\frac{3}{4}}(x)}\right)(|g|_{0,1}+\|f(\cdot)\|_{0,1})
 \end{equation}
 where $N=N(K_0, d, d_1, k, D, L, L_0, \mu)$.

 {\rm (ii)} Assume $(H6)_1$, $(H8)$ and $(H10)_2$ hold. Then $u\in C^{1,1}_{loc}(D)\cap C^{0,1}(\overline{D})$, and for any $\xi_0\in\mathbb{R}^d$ and $a.e.\  x\in D$,
 \begin{equation*}
  |u_{(\xi_0)(\xi_0)}(x)|\leq N\left(|\xi_0|^2+\frac{\psi_{(\xi_0)}^2(x)}{\psi^{\frac{7}{4}}(x)}\right)[|g|_{1,1}+\|f(\cdot)\|_{0,1}+[f]_{1,1}(1+|g|_1^2+\|f(\cdot)\|_{0,1}^2)] \label{eq:estimate2}
 \end{equation*}
 where $N=N(K_0, d, d_1, k, D, L, L_0, \mu)$. Furthermore, $u$ is the unique solution in
  $C_{loc}^{1,1}(D)\cap C^{0,1}(\overline{D})$ of the semi-linear system of PDEs
\begin{equation}\label{eq:Par2}
 \begin{cases}
    \cL u(x)+f(x,u(x),\nabla u(x)\sigma(x))=0, \quad &a.e.\  x\in D;\\[3mm]
                 u(x)=g(x), \quad & x\in\partial D.
 \end{cases}
\end{equation}
\end{Theorem}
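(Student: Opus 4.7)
The plan is to prove the gradient and Hessian bounds by the same scheme used in \cite{WEI1} for the scalar linear case, transferred to the vector-valued semi-linear setting by means of the BSDE a priori bound of Lemma \ref{le:BSDE}. Via Lemmas \ref{le:BSDE3}--\ref{le:BSDE4}, $u(x+\delta\xi_0+\frac12\delta^2\eta_0)=Y_0^\delta$, so that $u_{(\xi_0)}(x)$ is the limit of $(Y_0^\delta-Y_0)/\delta$ and $u_{(\xi_0)(\xi_0)}(x)$ is the limit of $(Y_0^\delta-2Y_0+Y_0^{-\delta})/\delta^2$ (the latter with $\eta_0=0$). Once these quantitative limits are controlled, the PDE part of (ii) will follow a posteriori from It\^o's formula applied to $u(X_t)$ on $D_\lambda$ together with the uniqueness part of Lemma \ref{le:BSDE}.

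For (i), fix $x\in D$ and $\xi_0\in\mathbb{R}^d$. First I would write the BSDE satisfied by $(\hat Y,\hat Z):=(Y^\delta-Y,\tilde Z^\delta-Z)$ on $[0,\tau\wedge\tau^\delta]$: its driver splits into a linear part in $(\hat Y,\hat Z)$ with Lipschitz constants $L,L_0$ from $(H4)$ and monotonicity $-\mu$ from $(H5)$, plus an $x$-remainder of order $\|f(\cdot)\|_{0,1}|X^\delta_t-X_t|$, while its terminal value at $\tau\wedge\tau^\delta$ is bounded by $|g|_{0,1}|X^\delta_{\tau\wedge\tau^\delta}-X_{\tau\wedge\tau^\delta}|$. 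Applying Lemma \ref{le:BSDE} with the exponent $\beta$ admitted by $(H7)$ gives
\[
 \frac{|Y^\delta_0-Y_0|}{\delta}\le C\bigl(|g|_{0,1}+\|f(\cdot)\|_{0,1}\bigr)\left\|\frac{X^\delta_\cdot-X_\cdot}{\delta}\right\|_{\mathcal{M}_\beta},
\]
which converges to a functional of $\xi_\cdot$ by \eqref{eq:estin11}. To control the terminal increment I would use the freedom in the auxiliary processes $(r_t,\pi_t,P_t)$ of Lemma \ref{le:Qua} to force $\xi_t$ to be tangent to $\partial D$ whenever $X_t$ meets $\partial D$; this construction is feasible under $(H9)$ (transversality of $a$ along normals) and lets us replace $g(X^\delta_\tau)-g(X_\tau)$ by a tangential directional derivative of $g$ bounded by $|g|_{0,1}$ alone. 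The barrier Lemmas \ref{le:quasiestimate1}--\ref{le:quasiestimate2}, where $(H10)_1$ enters, then bound $E_x[|\xi_t|^p]$ by an interior barrier plus a boundary barrier built on $\psi$, and the boundary barrier produces the weight $|\psi_{(\xi_0)}(x)|/\psi^{3/4}(x)$ in \eqref{eq:estimate1}.

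Part (ii) follows the same template applied to the second-order quotient. The BSDE for $Y^\delta-2Y+Y^{-\delta}$ has a driver containing second differences of $f$; since $f$ is only $C^{1,1}$, a direct Taylor expansion leaves a quadratic-in-$z$ remainder. I would absorb this via the exponential change of variables of Pardoux--Peng~\cite[Theorem 2.9]{PAR}, which combined with the monotonicity $f_y\le-\mu$ from $(H8)$ linearises the BSDE and lets Lemma \ref{le:BSDE} apply once more; the price of the quadratic nonlinearity in $z$ is precisely the extra factor $|g|_1^2+\|f(\cdot)\|_{0,1}^2$ in the theorem's bound. Controlling the second quasi-derivative $\eta_t$ by the $p=2$ barrier estimates under $(H10)_2$ then yields the weight $\psi_{(\xi_0)}^2(x)/\psi^{7/4}(x)$. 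Given the regularity $u\in C^{1,1}_{loc}(D)\cap C^{0,1}(\overline{D})$ supplied by (i)--(ii), It\^o's formula applied to $u(X_t)$ on $D_\lambda$ and comparison with BSDE \eqref{eq:BSDE1} forces $\cL u+f(x,u,\nabla u\,\sigma)=0$ a.e.; uniqueness is immediate from the uniqueness part of Lemma \ref{le:BSDE} applied to the BSDE that any other solution $\tilde u$ in the same class would produce via It\^o.

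The hardest step will be the simultaneous construction of the auxiliary processes $(r_t,\pi_t,P_t)$ and $(\tilde r_t,\tilde\pi_t,\tilde P_t)$ that (a) make the first and second quasi-derivatives tangent to $\partial D$ at exit, (b) keep their moments finite under $(H10)_p$ up to $\tau$, and (c) produce the sharp weights $\psi^{-3/4}$ and $\psi^{-7/4}$. This is where the barrier construction of Section \ref{sec:appl} does the real work, and it is the point at which the vector-valued semi-linear case genuinely departs from \cite{WEI1}: higher moments of $\xi_t$ (rather than mere expectations) are required to feed the Pardoux--Peng linearisation that copes with the nonlinearity of $f$ in $z$, which is why condition $(H10)_p$ is needed with $p=1,2$ rather than only $p=1/2$ as in \cite{WEI1}.
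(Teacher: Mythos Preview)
Your plan has the right skeleton but contains a genuine gap in the handling of the terminal value of the difference BSDE.

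You write that the terminal increment at $\tau\wedge\tau^\delta$ is bounded by $|g|_{0,1}|X^\delta-X|$ and that you will ``force $\xi_t$ to be tangent to $\partial D$ whenever $X_t$ meets $\partial D$'' so as to replace $g(X^\delta_\tau)-g(X_\tau)$ by a tangential derivative of $g$. This is not correct. The difference BSDE is run up to a truncated time $\gamma_1=\tau_1\wedge\tau\wedge\tau^\delta\wedge k_n\wedge T$ (not $\tau$), and at that time Lemmas \ref{le:BSDE3}--\ref{le:BSDE4} give $Y_{\gamma_1}^\delta-Y_{\gamma_1}=u(X^\delta_{\gamma_1})-u(X_{\gamma_1})$, which is a difference of the \emph{unknown} $u$, not of $g$, since neither $X_{\gamma_1}$ nor $X^\delta_{\gamma_1}$ need lie on $\partial D$. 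So even if you could force tangency at $\partial D$, you would still face a terminal term of the form $\delta\,u_{(\xi_{\gamma_1})}(X_{\gamma_1})$ with $u_x$ unknown. The paper does not attempt to make $\xi_\tau$ tangent; instead it bounds the ratio $|u_{(\xi_{\gamma_1})}(X_{\gamma_1})|^2/B_1^{1/2}(X_{\gamma_1},\xi_{\gamma_1})$ (Lemma \ref{le:estin1_in_sec4}) and then closes the resulting self-referential inequality by a two-region bootstrap between $D_{\delta_1}^\lambda$ and $D_{\lambda^2}$ using the comparison of $B_1$ and $B_2$ on the interfaces (Lemmas \ref{le:barriers} and \ref{le:estin2_in_sec4}). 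Only in the limit $\delta_1\to0$ does the boundary data $g$ re-enter, and there the structure of $B_1$ (the $\psi_{(y)}^4/\psi^3$ term) automatically separates the tangential case $\psi_{(\zeta)}(z)=0$ from the normal case. This bootstrap is the core of the argument and is absent from your plan.

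A second, smaller issue: you propose to apply Lemma \ref{le:BSDE} directly to $(\hat Y,\hat Z)$. The difference pair does not satisfy a BSDE of the form covered by that lemma, because the perturbed driver carries the extra factor $(1+2\delta r_t)$ in front of $f$ and the term $\tilde Z^\delta_t\,\delta\pi_t$; the paper therefore performs the It\^o computation on $e^{2\beta t}|Y^\delta_t-Y_t|^2$ by hand (equation \eqref{eq:estin25}) and absorbs these extra terms using the $\sup$-moment estimates of $(r_t,\pi_t,P_t)$ from Lemma \ref{le:quasiestimate1}(iii). Those $\sup$ bounds (not just integral bounds) are precisely why the fourth-moment barrier $B_1$ is needed rather than Zhou's second-moment barrier; your proposal states this correctly at the end but does not use it where it matters.
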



\section{Moment Estimates of Quasi-derivatives}\label{sec:appl}
In this section, we construct barrier functions in the spirit of~\cite[Lemmas 3.3 and 3.4]{WEI1} to estimate quasi-derivatives, which are used in the gradient and Hessian estimates.

The following estimates on the first exit time can be found in \cite[Lemma 3.1]{WEI1} and \cite[Lemma 2.1]{KRY2}.
\begin{Lemma}\label{le:stopping}
 Let $(H2)$ be satisfied and $\tau(x)$ be the first exit time of $X_t$ from D. Then we have for $x\in D$,
 \begin{eqnarray*}
  &&\ds E\left[\tau(x)\right]\leq\psi(x)\leq|\psi|_0,\quad  E\left[\tau^2(x)\right]\leq2|\psi|_0\psi(x)\leq2|\psi|_0^2,\\
  &&\ds E\left[\tau^p(x)\right]\leq N(|\psi|_0^p), \quad\forall p>2.
  \end{eqnarray*}
\end{Lemma}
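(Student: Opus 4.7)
The proof has two ingredients: an Itô–formula estimate using the barrier $\psi$ from $(H2)$, and a strong-Markov iteration to pass from first to higher moments.

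First I would establish the bound on $E[\tau(x)]$. Apply Itô's formula to $\psi(X_t)$ on $[0,\tau\wedge n]$ for $n\in\mathbb{N}$:
\begin{equation*}
 \psi(X_{\tau\wedge n}) = \psi(x)+\int_0^{\tau\wedge n}\cL\psi(X_s)\,ds+\int_0^{\tau\wedge n}\psi_x(X_s)\sigma(X_s)\,dW_s.
\end{equation*}
Since $\psi\in C^4$ and $\sigma$ is bounded on $\overline D$ (by $(H3)$), the stochastic integral is a true martingale up to $\tau\wedge n$, and taking expectations gives $E\psi(X_{\tau\wedge n}) = \psi(x)+E\int_0^{\tau\wedge n}\cL\psi(X_s)\,ds$. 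Using $\cL\psi\le -1$ from (\ref{eq:psi}) and $\psi\ge0$ on $\overline D$, this yields $E[\tau\wedge n]\le\psi(x)-E\psi(X_{\tau\wedge n})\le\psi(x)\le|\psi|_0$. Monotone convergence then gives $\tau<\infty$ a.s.\ and $E[\tau(x)]\le\psi(x)\le|\psi|_0$, which is the first inequality.

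Second, for the higher moments I would use the identity $\tau^p=p\int_0^\tau(\tau-s)^{p-1}\,ds$ together with the strong Markov property of $\{X_t\}$. On $\{s<\tau\}$ the shifted exit time $\tau-s$ equals $\tau(X_s)\circ\theta_s$, so
\begin{equation*}
 E\bigl[(\tau-s)^{p-1}\mathbf{1}_{\{s<\tau\}}\,\bigl|\,\mathcal F_s\bigr]=E_{X_s}[\tau^{p-1}]\mathbf{1}_{\{s<\tau\}}.
\end{equation*}
For $p=2$, the first-moment bound just established gives $E_{X_s}[\tau]\le\psi(X_s)\le|\psi|_0$, hence
\begin{equation*}
 E[\tau^2]=2E\!\int_0^\tau E_{X_s}[\tau]\,ds\le 2|\psi|_0 E[\tau]\le 2|\psi|_0\psi(x),
\end{equation*}
which is the second inequality. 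For $p>2$ I would then argue by induction: assuming $E_y[\tau^{p-1}]\le N_{p-1}|\psi|_0^{p-1}$ uniformly in $y\in\overline D$, the same Fubini/strong-Markov computation gives $E[\tau^p]\le pN_{p-1}|\psi|_0^{p-1}E[\tau]\le pN_{p-1}|\psi|_0^p$, i.e.\ $N_p:=pN_{p-1}|\psi|_0$ works.

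The only delicate point is justifying the vanishing of the martingale term and the passage to the limit $n\to\infty$; both follow from $\psi\in C^4(\overline D)$, boundedness of $\sigma$, and dominated/monotone convergence once one knows $\tau<\infty$ a.s., so there is no serious obstacle. Everything reduces to a clean application of Itô's formula to the barrier $\psi$ supplied by $(H2)$, combined with a routine Markov iteration.
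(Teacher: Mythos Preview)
Your proof is correct and is exactly the standard argument; the paper itself does not prove this lemma but only cites \cite[Lemma~3.1]{WEI1} and \cite[Lemma~2.1]{KRY2}, where the same It\^o--barrier computation for $E[\tau]$ and the same strong-Markov iteration $E[\tau^p]=pE\int_0^\tau E_{X_s}[\tau^{p-1}]\,ds$ for higher moments are used. Two cosmetic remarks: (i) you invoke $(H3)$ for boundedness of $\sigma$, but the lemma only assumes $(H2)$; continuity of $\sigma$ from $(H1)$ together with compactness of $\overline D$ already gives the needed bound on the integrand of the stochastic integral; (ii) your induction step handles integer $p$ only, while the statement is for all real $p>2$ --- but this is immediate from the integer case via $\tau^p\le 1+\tau^{\lceil p\rceil}$ or H\"older interpolation, so there is no gap.
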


Given $\lambda$, $\delta_1\in (0,1)$ with $\delta_1<\lambda^2$, define
\begin{equation*}
 D_{\delta_1}^\lambda:=\{x\in D: \delta_1<\psi(x)<\lambda\},\quad D_{\lambda^2}:=\{x\in D: \psi(x)>\lambda^2\}.
\end{equation*}
For $x\in D_{\delta_1}^\lambda$, define $\tau_1:=\tau_{D_{\delta_1}^\lambda}(x)$ be the first exit time of $X_t$ from $D_{\delta_1}^\lambda$. For $x\in D_{\lambda^2}$, define $\tau_2:=\tau_{D_{\lambda^2}}(x)$ be the first exit time of $X_t$ from $D_{\lambda^2}$.

Krylov \cite[Section 3]{KRY3} introduced the method of dividing the whole domain $D$ into two parts to estimate the moments of quasi-derivatives separately. Since $\psi$ vanishes at the boundary, it is not convenient to construct coefficients of the quasi-derivatives, such as $r,\pi$ and $P$, uniformly in the whole domain. Zhou \cite{WEI1} constructed two families of local super-martingales to estimate moments of quasi-derivatives near the boundary and in the interior of the domain, separately. We still use his notions of quasi-derivatives $\xi_t$ and $\eta_t$.  In our more general  BSDE context (see next section for more details), as higher moment estimates of quasi-derivatives are necessary,  we could not use his original barrier functions $B_1$ and $B_2$ in \cite[Lemmas 3.3 and 3.4]{WEI1}, and instead we consider four new barrier functions in this section. See \cite[Remark 3.5]{WEI1} for the motivation of building barrier functions. Actually our main difficulty lies in the term $E_{x,\xi_0} [u_{(\xi_\tau)}(X_\tau)]$ in our gradient estimate of $u$. So we should try to construct $\xi_t$ such that either $\xi_\tau$ is tangent to $\partial D$ at $X_\tau$ almost surely, or $|u_{(\xi_\tau)}(X_\tau)|$ is bounded by a nonnegative local super-martingale $\{B(X_t,\xi_t),\ t\in[0,\tau]\}$. In our Hessian estimate of $u$, the same difficulty exists around both terms $E_{x,\xi_0} [u_{(\xi_\tau)(\xi_\tau)}(X_\tau)]$ and $E_{x,\eta_0} [u_{(\eta_\tau)}(X_\tau)]$. Similarly, we need to construct $\xi_t$ such that either $\xi_\tau$ is tangent to the boundary, or $|u_{(\xi_\tau)(\xi_\tau)}(X_\tau)|$ is bounded by another nonnegative local super-martingale $\{\tilde B(X_t,\xi_t),\ t\in[0,\tau]\}$. Here as mentioned in \cite[page 5]{WEI1},  $\eta_\tau$ is not necessarily tangent to $\partial D$ at $X_\tau$, for  $\eta_\tau$ can be represented as the sum of the tangential  and the normal components.

Define the three functions
 \begin{eqnarray*}
  \varphi(x)&:=&\lambda^2+\psi(x)-\frac{1}{4\lambda}\psi^2(x), \quad x\in D^\lambda_{\delta_1};\\
 B_1(x,y)&:=&\left(\lambda+\sqrt{\psi(x)}+\psi(x)\right)|y|^4+K_1\varphi^{\frac{7}{2}}(x)\frac{\psi_{(y)}^4(x)}{\psi^3(x)}, \quad  (x, y)\in D^\lambda_{\delta_1}\times \mathbb{R}^d,
 \end{eqnarray*}
 where $K_1\in[1,\infty)$ is a constant depending only on $K_0$; and
 \begin{equation*}
  B_2(y):=\lambda^{\frac{3}{4}}|y|^4, \quad  y\in \mathbb{R}^d.
 \end{equation*}

In this section, for simplicity of exposition, we shall omit the argument $X_t$ in the coefficients $\sigma$, $b$, $\psi$ and their derivatives whenever no confusion is made.


\begin{Lemma}\label{le:quasiestimate1}
 Let $(H3)$ and $(H9)$ be satisfied.
Define $X_t$ by \eqref{eq:SDE2} and the first quasi-derivative $\xi_t$ by (\ref{eq:Qua1}), where for $(\tilde x, y)\in D^\lambda_{\delta_1}\times \mathbb{R}^d$
  \begin{eqnarray}
   &&\ds A(\tilde x):=\sum_{i=1}^{d_1}\psi^2_{(\sigma_i)}(\tilde x),\quad \bar{\rho}(\tilde x,y):=-\frac{1}{A(\tilde x)}\sum_{i=1}^{d_1} \psi_{(\sigma_i)}(\tilde x)(\psi_{(\sigma_i)})_{(y)}(\tilde x),\nonumber\\
   &&\ds r(\tilde x,y):=\bar{\rho}(\tilde x,y)+\frac{\psi_{(y)}(\tilde x)}{\psi(\tilde x)},\quad  \tilde r(\tilde x,y):=\frac{\psi^2_{(y)}(\tilde x)}{\psi^2(\tilde x)},\nonumber\\
   &&\ds \pi_i(\tilde x,y):=\frac{4\psi_{(\sigma_i)}(\tilde x)\psi_{(y)}(\tilde x)}{\varphi(\tilde x)\psi(\tilde x)}, \quad i=1,\cdots,d_1,\nonumber\\
   &&\ds P_{ij}(\tilde x,y):=\frac{1}{A(\tilde x)}[\psi_{(\sigma_j)}(\tilde x)(\psi_{(\sigma_i)})_{(y)}(\tilde x)-\psi_{(\sigma_i)}(\tilde x)(\psi_{(\sigma_j)})_{(y)}(\tilde x)], \quad i,j=1,\cdots,d_1;\nonumber\\
   &&\ds r_t:=r(X_t,\xi_t),\ \pi_t:=(\pi_i(X_t,\xi_t))_{i=1,\cdots,d_1},\ P_t:=(P_{ij}(X_t,\xi_t))_{i,j=1,\cdots,d_1},\nonumber\\
   &&\ds \tilde r_t:=\tilde r(X_t,\xi_t),\ \tilde \pi_t=0,\ \tilde P_t=0, \quad \forall (x, \xi_0, t)\in D_{\delta_1}^\lambda\times \mathbb{R}^d\times[0,\tau_1]. \label{eq:coefficientinB1}
  \end{eqnarray}
Then for sufficiently small $\lambda$,  we have for $(x, \xi_0)\in D_{\delta_1}^\lambda\times \mathbb{R}^d$,

 {\rm (i)}\ the process $(B_1(X_t,\xi_t), B_1^{1\over 2}(X_t,\xi_t)), t\in[0,\tau_1]$ is a local super-martingale;

 {\rm (ii)}\ we have $\ds E_{x,\xi_0}\int_0^{\tau_1}\left(|\xi_t|^4+\frac{\psi_{(\xi_t)}^4(X_t)}{\psi^4(X_t)}\right)dt\leq N(K_0,\lambda)B_1(x,\xi_0)$ and
 $$\ds E_{x,\xi_0}\int_0^{\tau_1}(|r_t|^4+|\pi_t|^4+\|P_t\|^4+|\tilde r_t|^2)dt\leq N(K_0,\lambda)B_1(x,\xi_0);$$

 {\rm (iii)}\ we have $\ds E_{x,\xi_0}\left[\sup\limits_{0\leq t\leq\tau_1}\left(|\xi_t|^4+\frac{\psi_{(\xi_t)}^4(X_t)}{\psi^4(X_t)}\right)\right]\leq N(K_0,\lambda)B_1(x,\xi_0)$
  and
  $$\ds E_{x,\xi_0}\left[\sup\limits_{0\leq t\leq\tau_1}(|r_t|^4+|\pi_t|^4+\|P_t\|^4+|\tilde r_t|^2)\right]\leq N(K_0,\lambda)B_1(x,\xi_0).$$
\end{Lemma}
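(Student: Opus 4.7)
The plan is to compute $dB_1(X_t,\xi_t)$ via Itô's formula, verify that with the specific choice of auxiliary processes $(r,\tilde r,\pi,P)$ in (3.1) the drift is nonpositive for sufficiently small $\lambda$, and then read off the three assertions from that single computation plus standard localization and BDG arguments. The guiding heuristic behind the choice of $(r,\tilde r,\pi,P)$ is that $\pi$ is chosen to kill the only term in the drift of $\Psi(x,y):=K_1\varphi^{7/2}(x)\psi_{(y)}^4(x)/\psi^3(x)$ that is singular like $\psi^{-4}$ as $\psi\to 0$, the skew-symmetric rotation $P$ together with $\bar\rho$ is chosen so that the diffusion coefficient of $\psi_{(\xi_t)}$ contains only the component along $\psi_{(\xi_t)}/\psi$ (this requires the identity $\sum_i\psi_{(\sigma_i)}[(\sigma_{(\xi)}+r\sigma+\sigma P)_i\cdot\psi_x]=(\psi_{(\xi)}/\psi)A(x)+\langle\bar\rho,\xi\rangle A(x)$, which is built into (3.1)), and $\tilde r=\psi_{(\xi)}^2/\psi^2$ balances the second order correction. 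Assumption (H9) ensures $A(x)\ge c(\lambda)>0$ on $D^\lambda_{\delta_1}$, so these coefficients are well-defined and bounded (in terms of $\lambda$) by the quantities $|\xi_t|$ and $|\psi_{(\xi_t)}|/\psi(X_t)$ that appear in $B_1$.

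First I would apply Itô's formula to $\Phi(x,y):=(\lambda+\sqrt{\psi}+\psi)|y|^4$ and to $\Psi(x,y)$ separately, keeping all drift terms and showing the diffusion part is a genuine local martingale by a standard stopping argument. Then I would substitute the expressions (3.1) and collect the drift. The term $(\lambda+\sqrt\psi+\psi)|\xi_t|^4$ produces, via the $r,P,\pi$-dependent parts of $d\xi_t$, contributions of order $|\xi_t|^4(|r_t|^2+\|P_t\|^2+|\pi_t|^2)$, which after substitution of (3.1) are of order $\psi_{(\xi_t)}^4/\psi^4$ but come with the small factor $\lambda+\sqrt\psi+\psi$; they are absorbed into the negative drift $-(|\psi_{x}|^2/(4\lambda))|\xi_t|^4$ coming from $\cL(-\psi^2/(4\lambda))\le-\psi_x^\ast a\psi_x/(2\lambda)$. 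The singular term in $d\Psi$ from $\varphi^{7/2}\psi^{-3}$ differentiated twice is exactly the one canceled by the choice of $\pi_t$. What survives is a drift bounded above by $-\kappa(\lambda)\bigl(|\xi_t|^4+\psi_{(\xi_t)}^4(X_t)/\psi^4(X_t)\bigr)$ for a positive $\kappa(\lambda)$, provided $\lambda$ is small enough that the $-1$ in $\cL\psi\le -1$ dominates the remaining $O(1)$ terms in the drift.

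This single inequality on the drift gives (i) immediately, since the drift of $B_1(X_t,\xi_t)$ is nonpositive on $[0,\tau_1]$. It also gives the integral bound in (ii): take a localizing sequence $\tau_1^n\uparrow\tau_1$, take expectations, and use $B_1\ge 0$ to get $E\int_0^{\tau_1^n}(|\xi_t|^4+\psi_{(\xi_t)}^4/\psi^4)\,dt\le\kappa(\lambda)^{-1}B_1(x,\xi_0)$, then let $n\to\infty$ by monotone convergence. The bounds on $\int_0^{\tau_1}(|r_t|^4+|\pi_t|^4+\|P_t\|^4+|\tilde r_t|^2)\,dt$ follow because by (3.1) and (H3), on $D^\lambda_{\delta_1}$ each of $|r_t|,|\pi_t|,\|P_t\|$ is dominated by $N(K_0,\lambda)(|\xi_t|+|\psi_{(\xi_t)}|/\psi(X_t))$ and $|\tilde r_t|=\psi_{(\xi_t)}^2/\psi^2$, so their $L^p$-norms reduce to the already controlled integrand. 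For $B_1^{1/2}$, Itô's formula on $\{B_1>0\}$ yields $dB_1^{1/2}=\tfrac{1}{2}B_1^{-1/2}\,dB_1-\tfrac{1}{8}B_1^{-3/2}\,d\langle B_1\rangle$; both correction terms are nonpositive, so $B_1^{1/2}$ is a local super-martingale on $\{B_1>0\}$, and the extension across $\{B_1=0\}$ is automatic since $|\xi_t|=0$ forces $\xi\equiv 0$ (the linear SDE (2.13) has no additive noise) and hence $B_1\equiv 0$ afterward.

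For (iii), decompose $B_1(X_t,\xi_t)=B_1(x,\xi_0)+M_t-A_t$ with $M$ a local martingale and $A$ nondecreasing. The quadratic variation $\langle M\rangle_{\tau_1}$ is a polynomial in $|\xi_t|$ and $|\psi_{(\xi_t)}|/\psi$ whose expectation is controlled by (ii) and by $\sup_{t\le\tau_1}B_1(X_t,\xi_t)$ itself, so the BDG inequality applied to $M$ together with the relation $\sup B_1\le B_1(x,\xi_0)+\sup|M|$ gives the desired $E\sup_{t\le\tau_1}B_1(X_t,\xi_t)\le N(K_0,\lambda)B_1(x,\xi_0)$ via a standard quadratic absorbing argument; the sup bounds on $|\xi_t|^4$, $\psi_{(\xi_t)}^4/\psi^4$, and on $|r_t|,|\pi_t|,\|P_t\|,|\tilde r_t|$ then follow from the form of $B_1$ and from (3.1). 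The main obstacle will be the cancellation verification in the Itô drift of $\Psi$: the choices in (3.1) are pinned down by the demand that the $O(\psi^{-4})$ and $O(\psi^{-3})$ singular terms vanish identically, and the $O(\psi^{-2})$ and lower-order terms assemble into something that is nonpositive after choosing $\lambda$ small, relying on (H2) (in particular $\cL\psi\le -1$) and (H9) (so that $A(x)$ stays uniformly positive); carrying out that algebra carefully, without losing a favorable sign, is the delicate step.
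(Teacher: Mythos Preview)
Your overall strategy---compute the It\^o drift of $B_1$, show it is bounded above by a strictly negative multiple of $|\xi_t|^4+\psi_{(\xi_t)}^4/\psi^4$, and read off (i)--(iii)---is the same as the paper's. Two points, however, are not right.

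\medskip
\textbf{Source of the dominant negative term for $|\xi_t|^4$.} You attribute it to $\mathcal L(-\psi^2/(4\lambda))$, i.e.\ to the $\varphi$-factor. But $\varphi$ only enters through $\Psi=K_1\varphi^{7/2}\psi_{(y)}^4/\psi^3$; it never multiplies $|\xi_t|^4$. The decisive negative contribution for the $\Phi$-part comes from the It\^o correction of $\sqrt{\psi}$: in $d\bigl[(\lambda+\sqrt\psi+\psi)|\xi_t|^4\bigr]$ one picks up $-\tfrac{A}{8\psi^{3/2}}|\xi_t|^4$, and it is precisely this singular (as $\psi\to 0$) term that absorbs the $|\xi_t|^4$-type cross terms, which are themselves of order $|\xi_t|^4/\psi^{3/2}$ after Young's inequality. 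The ``bad'' terms of order $\varphi^{5/2}\psi_{(\xi_t)}^4/\psi^4$ produced by $\Phi$ are absorbed not by any $|\xi_t|^4$-term but by the large negative drift from $\Psi$, via the free constant $K_1$. This is why $K_1$ appears in the definition of $B_1$.

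\medskip
\textbf{Extraction of the sup-bound in (iii).} Your plan is to apply BDG to the martingale part of $B_1$ and conclude $E\sup_t B_1(X_t,\xi_t)\le N B_1(x,\xi_0)$, then read off the sup-bound on $\psi_{(\xi_t)}^4/\psi^4$ ``from the form of $B_1$''. This last step fails: $B_1$ contains $K_1\varphi^{7/2}\psi_{(\xi_t)}^4/\psi^3$, not $\psi_{(\xi_t)}^4/\psi^4$, and the missing factor $1/\psi$ cannot be recovered with a constant depending only on $(K_0,\lambda)$ since $\psi$ can be as small as $\delta_1$. The paper does not go through $\sup B_1$. Instead it treats the two pieces separately: a direct BDG argument on $d|\xi_t|^4$ (whose drift is bounded by $N(|\xi_t|^4+\psi_{(\xi_t)}^4/\psi^4)$, already controlled by (ii)) gives $E\sup|\xi_t|^4\le NB_1(x,\xi_0)$; and for $(\psi_{(\xi_t)}/\psi)^4$ one observes that, thanks to the specific choice of $r$ and $P$, the process $\psi_{(\xi_t)}/\psi$ has \emph{no} martingale part. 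Indeed the diffusion coefficient of $\psi_{(\xi_t)}$ is $(\psi_{(\xi_t)}/\psi)\psi_{(\sigma)}$, which exactly cancels against $\psi_{(\xi_t)}\,d(\psi^{-1})$ in the product rule. Hence $d(\psi_{(\xi_t)}/\psi)^4$ is pure drift, bounded above by $N|\xi_t|^4/\psi\,dt$ after using $\mathcal L\psi\le -1$, and the sup-bound follows by integrating and invoking the strengthened integral estimate $E\int|\xi_t|^4/\psi^{3/2}\,dt\le NB_1(x,\xi_0)$ coming from the sharp drift inequality $dB_1\le -c|\xi_t|^4/\psi^{3/2}\,dt+\cdots$. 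You should incorporate this vanishing-martingale observation; it is the only route to a $\delta_1$-independent bound on $\sup_t(\psi_{(\xi_t)}/\psi)^4$.
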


\begin{proof}

 First, in view of $(H9)$, there exists a constant $\delta'>0$, such that for $x\in\partial D$
 \begin{equation*}
  A:=\sum_{i=1}^{d_1}\psi^2_{(\sigma^i)}=2\langle a\psi_x,\psi_x\rangle=2|\psi_x|^2\langle an,n\rangle\geq 2\delta'.
 \end{equation*}
 Here we use the fact that $\psi_x$ has the same direction of $n$ and $|\psi_x|\geq 1$ near the boundary by continuity.
 Assume that $A\geq 1$ without loss of generality by replacing $\psi$ by $\psi/2\delta'$ if necessary.

 On the one hand, let
 \begin{equation*}
  \bar{\sigma}_t:=\sigma_{(\xi_t)}+r_t\sigma+\sigma P_t,\quad \bar{b}_t:=b_{(\xi_t)}+2r_tb-\sigma\pi_t, \quad t\geq 0.
 \end{equation*}
 Then we have
 \begin{equation*}
  d\xi_t=\bar{b}_tdt+\bar{\sigma}_tdW_t,\quad t\geq 0.
 \end{equation*}
 Using $(H3)$ and $\varphi\geq\lambda^2$, we reduce that
 \begin{equation*}
  |\sigma\pi_t|\leq K\left|\frac{\psi_{(\xi_t)}}{\psi}\right|, \quad t\in[0,\tau_1],
 \end{equation*}
 where $K$ is a constant depending on $K_0$ and $\lambda$. So, we have
 \begin{equation}\label{eq:quasi1}
  \|\bar{\sigma}_t\|\leq K\left(|\xi_t|+\frac{|\psi_{(\xi_t)}|}{\psi}\right),\quad |\bar{b}_t|\leq K\left(|\xi_t|+\frac{|\psi_{(\xi_t)}|}{\psi}\right),\quad t\in[0,\tau_1].
 \end{equation}
 Using It\^{o}'s formula, we have
 \begin{eqnarray*}
  &&\ds d\left[\left(\lambda+\sqrt{\psi}+\psi\right)|\xi_t|^4\right]\\
  &=&\ds \left(\lambda+\sqrt{\psi}+\psi\right)\left(4|\xi_t|^3\bar{b}_tdt+6|\xi_t|^2\|\bar{\sigma}_t\|^2dt+4|\xi_t|^3\bar{\sigma}_tdW_t\right)\\
  &&\ds +|\xi_t|^4\left[\left(1+2\sqrt{\psi}\right)\frac{\cL\psi}{2\sqrt{\psi}}-\frac{A}{8\psi^{\frac{3}{2}}}\right]dt+|\xi_t|^4\left(1+2\sqrt{\psi}\right)\frac{\psi_{(\sigma)}}
  {2\sqrt{\psi}}dW_t\\
  &&\ds +4|\xi_t|^3\bar{\sigma}_t\left(1+2\sqrt{\psi}\right)\frac{\psi_{(\sigma)}}{2\sqrt{\psi}}dt\\
  &=&\ds \Gamma_1(X_t,\xi_t)dt+\Lambda_1(X_t,\xi_t)dW_t, \quad t\in [0,\tau_1].
 \end{eqnarray*}
Set $\Gamma_1(X_t,\xi_t):=I_1+I_2+I_3+I_4$. Since $\lambda^2\leq\varphi\leq 2\lambda$ and $\psi\leq 2\varphi$, applying (\ref{eq:quasi1}) and Young's inequality, we have
 \begin{eqnarray*}
  I_1&=&\ds \lambda\left(4|\xi_t|^3\bar{b}_t+6|\xi_t|^2\|\bar{\sigma}_t\|^2\right)\\
  &\leq&\ds \lambda K\left[|\xi_t|^3\left(|\xi_t|+\frac{|\psi_{(\xi_t)}|}{\psi}\right)+|\xi_t|^2\left(|\xi_t|+\frac{|\psi_{(\xi_t)}|}{\psi}\right)^2\right]\\
  &\leq&\ds \left(3\lambda K\psi^{\frac{3}{2}}+\lambda K\frac{3}{4}+\frac{1}{32}\right)\frac{|\xi_t|^4}{\psi^{\frac{3}{2}}}+\left(\lambda K 2^{\frac{9}{2}}\varphi^2+32\cdot 2^{\frac{3}{2}}K^2\right)\varphi^{\frac{5}{2}}\frac{|\psi_{(\xi_t)}|^4}{\psi^4},
  \end{eqnarray*}

  \begin{eqnarray*}
  I_2&=&\ds\left(\sqrt{\psi}+\psi\right)\left(4|\xi_t|^3\bar{b}_t+6|\xi_t|^2\|\bar{\sigma}_t\|^2\right)\\
  &\leq&\ds 2K\sqrt{\psi}\left(3|\xi_t|^4+|\xi_t|^3\frac{|\psi_{(\xi_t)}|}{\psi}+2|\xi_t|^2\frac{|\psi_{(\xi_t)}|^2}{\psi^2}\right)\\
  &\leq&\ds \left(6K\psi^{\frac{5}{2}}+ \frac{3}{2}K\psi^{\frac{1}{2}}+\frac{1}{32}\right)\frac{|\xi_t|^4}{\psi^{\frac{3}{2}}}+\left(2^{\frac{11}{2}}K \varphi^\frac{5}{2}+ 2^{\frac{17}{2}}K^2\right)\varphi^{\frac{5}{2}}\frac{|\psi_{(\xi_t)}|^4}{\psi^4}.
  \end{eqnarray*}
Since $\cL\psi\leq -1$ and $A\geq 1$, we have
  \begin{equation*}
  I_3=|\xi_t|^4\left[\left(1+2\sqrt{\psi}\right)\frac{\cL\psi}{2\sqrt{\psi}}-\frac{A}{8\psi^{\frac{3}{2}}}\right]\leq -\frac{|\xi_t|^4}{8\psi^{\frac{3}{2}}}.
  \end{equation*}
 Applying (\ref{eq:quasi1}) and Young's inequality, we have
  \begin{eqnarray*}
  I_4&=&\ds 4|\xi_t|^3\bar{\sigma}_t\left(1+\sqrt{\psi}\right)\frac{\psi_{(\sigma)}}{2\sqrt{\psi}}\leq 2K\frac{|\xi_t|^3}{\sqrt{\psi}}\left(|\xi_t|+\frac{|\psi_{(\xi_t)}|}{\psi}\right)\\
  &\leq&\ds 2K\frac{|\xi_t|^4}{\sqrt{\psi}}+2K\frac{|\xi_t|^3|\psi_{(\xi_t)}|}{\psi^{\frac{3}{2}}}\leq 2K\psi\frac{|\xi_t|^4}{\psi^{\frac{3}{2}}}+\frac{3}{128}\frac{|\xi_t|^4}{\psi^{\frac{3}{2}}}+2^{17}K^4\frac{|\psi_{(\xi_t)}|^4}{\psi^{\frac{3}{2}}}.
 \end{eqnarray*}

 On the other hand, by definition of $r$ and $P$, we get
  \begin{equation*}
   \sum\limits_{i} \langle\psi_{xx}\sigma_i, P\sigma_i\rangle = {\rm{tr}}(\sigma\sigma^{*}\psi_{xx}P) = 0,
 \end{equation*}
 and
 \begin{equation*}
  (\psi_{(\sigma_j)})_{(\xi_t)}+r_t\psi_{(\sigma_j)}+\sum_i \psi_{(\sigma_i)}P_t^{ij}=\frac{\psi_{(\xi_t)}}{\psi}\psi_{(\sigma_j)},\quad t\in[0,\tau_1].
 \end{equation*}
 By It\^{o}'s formula, we have
 \begin{equation}\label{eq:estin3_in_B1}
  d\psi_{(\xi_t)}=\sum_i\frac{\psi_{(\xi_t)}}{\psi}\psi_{(\sigma_i)}dW_t^i+\left[(\cL\psi)_{(\xi_t)}+2r_t\cL\psi-\sum_i\psi_{(\sigma_i)}\pi_t^i\right]dt,\quad t\in[0,\tau_1].
 \end{equation}
 Since $\varphi\leq 2\lambda$, $\psi\leq 2\varphi$ and $\cL\psi\leq-1$, after removing the negative terms, we have
 \begin{small}\begin{eqnarray}
   \ds d\left(\varphi^{\frac{7}{2}}\frac{\psi^4_{(\xi_t)}}{\psi^3}\right)&=&\ds \frac{7}{2}\varphi^{\frac{5}{2}}\left(1-\frac{\psi}{2\lambda}\right)\cL\psi\frac{\psi^4_{(\xi_t)}}{\psi^3}dt
   -\frac{7}{2}\varphi^{\frac{5}{2}}\frac{\psi^2_{(\sigma)}}{4\lambda}\frac{\psi^4_{(\xi_t)}}{\psi^3}dt+
   \frac{35}{8}\varphi^{\frac{3}{2}}\left(1-\frac{\psi}{2\lambda}\right)^2\psi^2_{(\sigma)}\frac{\psi^4_{(\xi_t)}}{\psi^3}dt\nonumber\\
&&\ds+4\varphi^{\frac{7}{2}}\frac{\psi^3_{(\xi_t)}}{\psi^3}\left[(\cL\psi)_{(\xi_t)}+2\bar{\rho} \cL\psi\right]dt
+5\varphi^{\frac{7}{2}}\cL\psi\frac{\psi_{(\xi_t)}^4}{\psi^4}dt-16\varphi^{\frac{5}{2}}\frac{\psi^2_{(\sigma)}\psi^4_{(\xi_t)}}{\psi^4}dt\nonumber\\
&&\ds +\frac{7}{2}\varphi^{\frac{5}{2}}\left(1-\frac{\psi}{2\lambda}\right)\psi_{(\sigma)}\frac{\psi^4_{(\xi_t)}}{\psi^3}dW_t
+\varphi^{\frac{7}{2}}\frac{\psi^4_{(\xi_t)}\psi_{(\sigma)}}{\psi^4}dW_t
+\frac{7}{2}\left(1-\frac{\psi}{2\lambda}\right)\varphi^{\frac{5}{2}}\frac{\psi^2_{(\sigma)}\psi^4_{(\xi_t)}}{\psi^4}dt\nonumber\\
&\leq &\ds -\frac{15}{4}\varphi^{\frac{5}{2}}\frac{A\psi_{(\xi_t)}^4}{\psi^4}dt+4 K\varphi^{\frac{7}{2}}\frac{\psi_{(\xi_t)}^3}{\psi^3}|\xi_t|dt+\Lambda_2(X_t,\xi_t)dW_t,\label{eq:estin1_in_B1}
 \end{eqnarray}\end{small}
where
$$\Lambda_2(X_t,\xi_t):=\frac{7}{2}\varphi^{\frac{5}{2}}\left(1-\frac{\psi}{2\lambda}\right)\psi_{(\sigma)}\psi^4_{(\xi_t)}\psi^{-3}
+\varphi^{\frac{7}{2}}\psi^4_{(\xi_t)}\psi_{(\sigma)}\psi^{-4}.$$

Collecting all the above estimates and choosing $K_1$ such that $K_1\geq K$, and letting $\lambda$ be sufficiently small, we get
 \begin{equation}\label{eq:estimate_1}
  dB_1(X_t,\xi_t)\leq \left(-\frac{|\xi_t|^4}{64\psi^{\frac{3}{2}}}-\frac{1}{2}\varphi^{\frac{5}{2}}\frac{\psi^4_{(\xi_t)}}{\psi^4}\right)dt+dm_t\leq dm_t,\quad t\in[0,\tau_1],
 \end{equation}
 where $m_t:=\Lambda_1(X_t,\xi_t)dW_t+K_1\Lambda_2(X_t,\xi_t)dW_t, t\in [0,\tau_1]$ is a local martingale. It follows that the process $\{B_1(X_t,\xi_t),\ t\in [0, \tau_1]\}$ is a local super-martingale.

 Also, since $f(x)=\sqrt{x}$ is concave, the process $\{B_1^{1\over 2}(X_t,\xi_t),\ t\in [0, \tau_1]\}$ is a local super-martingale. Thus Assertion {\rm (i)} is proved.


By definition, we know that for $(x,\xi_0)\in D_{\delta_1}^\lambda\times\mathbb{R}^d$
\begin{equation*}
E_{x,\xi_0}\int_0^{\tau_1}\left(|(r_t, \pi_t, P_t)|^4+|\tilde r_t|^2\right)dt\leq NE_{x,\xi_0}\int_0^{\tau_1}\left(|\xi_t|^4+\frac{\psi^4_{(\xi_t)}}{\psi^4}\right)dt.
\end{equation*}

From (\ref{eq:estimate_1}), there exists a sufficiently small positive $\lambda_0$, such that
\begin{equation*}
 \lambda_0 E_{x,\xi_0}\int_0^{\tau_1}\left(|\xi_t|^4+\frac{\psi^4_{(\xi_t)}}{\psi^4}\right)dt\leq B_1(x,\xi_0)-E_{x,\xi_0}B_1(X_{\tau_1},\xi_{\tau_1})\leq B_1(x,\xi_0),
\end{equation*}
which yields Assertion {\rm (ii)}.


 Using It\^{o}'s formula,  from (\ref{eq:quasi1}), we have
 \begin{eqnarray*}
   d|\xi_t|^4&=& \ds 2|\xi_t|^2\left(2\langle\xi_t,\bar{b}_t\rangle+\|\bar{\sigma}_t\|^2\right)dt
   +|\langle\xi_t,\bar{\sigma}_t\rangle|^2dt+4|\xi_t|^2\langle\xi_t,\bar{\sigma}_td W_t\rangle\\
   &\leq&\ds N\left(|\xi_t|^4+\frac{\psi_{(\xi_t)}^4}{\psi^4}\right)dt+4|\xi_t|^2\langle\xi_t,\bar{\sigma}_td W_t\rangle,\quad t\in[0,\tau_1].
 \end{eqnarray*}
Using Assertion (ii) and the BDG inequality, for $\tau_n=\tau_1\wedge\inf\{t\geq 0:|\xi_t|\geq n\}$, we have
 \begin{eqnarray*}
   E_{\xi_0}\left[\sup\limits_{0\leq t\leq\tau_n} |\xi_t|^4\right]&\leq&\ds |\xi_0|^4+NE_{x,\xi_0}\int_0^{\tau_n}\left(|\xi_t|^4+\frac{\psi_{(\xi_t)}^4}{\psi^4}\right)dt\\
   & &\ds+4E_{x,\xi_0}\left[\sup\limits_{0\leq t\leq\tau_n}\left|\int_0^{\tau_n}|\xi_t|^2\langle\xi_t,\bar{\sigma}_td W_t\rangle\right|\right]\\
   &\le&\ds N B_1(x,\xi_0)+N E_{x,\xi_0}\left[\left(\int_0^{\tau_n}|\xi_t|^6\left(|\xi_t|^2+\frac{\psi_{(\xi_t)}^2}{\psi^2}\right)dt\right)^{\frac{1}{2}}\right].
 \end{eqnarray*}
 Since the last expectation is rewritten and estimated (using twice Cauchy inequality, and then Assertion (ii)) as  follows
  \begin{eqnarray*}
   &\leq&\ds E_{x,\xi_0}\left[\left(\int_0^{\tau_n}N^2\left(|\xi_t|^4+|\xi_t|^2\frac{\psi_{(\xi_t)}^2}{\psi^2}\right)dt\right)^{\frac{1}{2}}\sup_{0\leq t\leq\tau_n}|\xi_t|^2\right]\\
   &\leq&\ds {1\over2}  E_{\xi_0}\left[\sup\limits_{0\leq t\leq\tau_n} |\xi_t|^4\right] +{1\over 2} N^2 E_{x,\xi_0}\int_0^{\tau_n}\left(|\xi_t|^4+|\xi_t|^2\frac{\psi_{(\xi_t)}^2}{\psi^2}\right)dt\\
   &\le & {1\over2}  E_{\xi_0}\left[\sup\limits_{0\leq t\leq\tau_n} |\xi_t|^4\right] +{1\over 2} N^2 E_{x,\xi_0}\int_0^{\tau_n}\left({5\over 4}|\xi_t|^4+\frac{\psi_{(\xi_t)}^4}{\psi^4}\right)dt\\
  &\le & {1\over2}  E_{\xi_0}\left[\sup\limits_{0\leq t\leq\tau_n} |\xi_t|^4\right]+ N^2 N B_1(x,\xi_0),
 \end{eqnarray*}
we conclude the following
 \begin{equation}\label{eq:estin10_in_B1}
  E_{\xi_0}\left[\sup\limits_{0\leq t\leq\tau_n}|\xi_t|^4\right]\leq NB_1(x,\xi_0), \quad \forall (x,\xi_0)\in D_{\delta_1}^\lambda\times\mathbb{R}^d.
 \end{equation}
  In view of \eqref{eq:estin3_in_B1} and using It\^{o}'s formula to the term $\psi_{(\xi_t)}\psi^{-1}$, we find that the relevant local martingale is vanishing. Then using It\^{o}'s formula to the term $\psi_{(\xi_t)}^4\psi^{-4}$, we only need to consider the drift term. Hence, by Young's inequality, we have for $A\geq 1$
 \begin{eqnarray*}
   \ds d\left(\frac{\psi_{(\xi_t)}}{\psi}\right)^4 &=&4\frac{\psi_{(\xi_t)}^3}{\psi^3}\left[\frac{(\cL\psi)_{(\xi_t)}+2\bar{\rho}\cL\psi}{\psi}+\frac{\psi_{(\xi_t)}\cL\psi}{\psi^2}-\frac{4A\psi_{(\xi_t)}}{\varphi\psi^2}\right]dt\\
   &\leq&\displaystyle 4K\frac{|\psi_{(\xi_t)}|^3|\xi_t|}{\psi^4}dt+4\frac{\psi_{(\xi_t)}^4}{\psi^5}\cL\psi dt\\
   &\leq&\displaystyle 3\frac{\psi_{(\xi_t)}^4}{\psi^5}dt
                                   +NK^4\frac{|\xi_t|^4}{\psi}dt+4\frac{\psi_{(\xi_t)}^4}{\psi^5}\cL\psi dt,\quad t\in[0,\tau_1].
 \end{eqnarray*}
 In fact, by \eqref{eq:estimate_1}, we have $E\int_0^{\tau_1} |\xi_t|^4\psi^{-1}dt\leq NB_1(x,\xi_0)$. Hence, by $\cL\psi\leq-1$, we obtain
 \begin{equation}\label{eq:estin11_in_B1}
  E_{x,\xi_0}\left[\sup\limits_{0\leq t\leq\tau_n}\left(\frac{\psi_{(\xi_t)}}{\psi}\right)^4\right]\leq NB_1(x,\xi_0),\quad \forall (x,\xi_0)\in D_{\delta_1}^\lambda\times\mathbb{R}^d.
 \end{equation}

By \eqref{eq:estin10_in_B1} and \eqref{eq:estin11_in_B1}, letting $n\to\infty$, we conclude
 \begin{equation*}
  E_{x,\xi_0}\left[\sup\limits_{0\leq t\leq\tau_1}\left(|\xi_t|^4+\frac{\psi_{(\xi_t)}^4}{\psi^4}\right)\right]\leq NB_1(x,\xi_0),\quad \forall (x,\xi_0)\in D_{\delta_1}^\lambda\times\mathbb{R}^d.
 \end{equation*}
Also, by definition, we have for $(x,\xi_0)\in D_{\delta_1}^\lambda\times\mathbb{R}^d$
\begin{equation}\label{eq:estin2_in_B1}
 E_{x,\xi_0}\left[\sup\limits_{0\leq t\leq\tau_1}\left(|(r_t, \pi_t, P_t)|^4+|\tilde r_t|^2\right)\right]\leq NE_{x,\xi_0}\left[\sup\limits_{0\leq t\leq\tau_1}\left(|\xi_t|^4+\frac{\psi_{(\xi_t)}^4}{\psi^4}\right)\right].
\end{equation}
Thus Assertion {\rm (iii)} is proved.

\end{proof}

\begin{Remark}
 \rm{(i)}  In our Lemma~\ref{le:quasiestimate1},  to make the term $\varphi^{\frac{5}{2}}A\psi_{(\xi_t)}^4\psi^{-4}$ in \eqref{eq:estin1_in_B1} negative, $\pi_i(\tilde x,y)=4\psi_{(\sigma_i)}(\tilde x)\psi_{(y)}(\tilde x)(\varphi(\tilde x)\psi(\tilde x))^{-1}$ is the double of that of \cite[Lemma 3.3]{WEI1}.

 \rm{(ii)} The above estimate \eqref{eq:estin2_in_B1} is new to quasi-derivatives, and will be used to estimate the gradient and Hessian matrix of $u$.
\end{Remark}

\begin{Lemma}\label{le:quasiestimate2}
 Let $(H3)$ and $(H10)_1$ be satisfied.
 Define $X_t$ by \eqref{eq:SDE2} and the first quasi-derivative $\xi_t$ by (\ref{eq:Qua1}), where for $(\tilde x,y)\in D_{\lambda^2}\times\mathbb{R}^d$
 \begin{eqnarray}
   & &\ds r(\tilde x,y):=\langle\rho(\tilde x),y\rangle,\ \pi(\tilde x,y):=\frac{M(\tilde x)}{2}\sigma^{*}(\tilde x)y,\ P(\tilde x,y):= Q(\tilde x,y);\nonumber\\
   & &\ds r_t:=r(X_t,\xi_t),\ \pi_t:=\pi(X_t,\xi_t),\ P_t:=P(X_t,\xi_t),\quad \forall (x, \xi_0,t)\in D_{\lambda^2}\times\mathbb{R}^d\times[0,\tau_2].\nonumber\\ \label{eq:coefficientinB2}
 \end{eqnarray}
 Then for sufficiently small $\lambda$, we have for $(x, \xi_0)\in D_{\lambda^2}\times\mathbb{R}^d$,

 {\rm (i)}\ the process $(e^{4\beta t}B_2(\xi_t), e^{2\beta t}B_2^{1\over 2}(\xi_t)),\ t\in[0,\tau_2]$ is a local super-martingale;

 {\rm (ii)}\ $\ds E_{\xi_0}\int_0^{\tau_2}e^{4\beta t}|\xi_t|^4dt\leq N(K_0,\lambda)B_2(\xi_0)$;

 {\rm (iii)}\ $E_{x,\xi_0}\left[\sup\limits_{0\leq t\leq\tau_2}e^{4\beta t}(|\xi_t|^4+|r_t|^4+|\pi_t|^4+\|P_t\|^4)\right]\leq N(K_0,\lambda)B_2(\xi_0)$.
\end{Lemma}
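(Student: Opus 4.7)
The plan is to mirror the argument of Lemma \ref{le:quasiestimate1}, but now using the simpler interior barrier $B_2(y) = \lambda^{3/4}|y|^4$ weighted by the exponential $e^{4\beta t}$: away from $\partial D$, no singular boundary correction is required, and the job of killing the drift of $|\xi_t|^4$ is done entirely by assumption $(H10)_1$, which is tailored so that the specific choices of $r_t, \pi_t, P_t$ in \eqref{eq:coefficientinB2} produce a non-positive drift.

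For Assertion (i), I apply It\^o's formula to $e^{4\beta t}|\xi_t|^4$ using \eqref{eq:Qua1} with the new coefficients. Writing $\bar{\sigma}_t := \sigma_{(\xi_t)} + r_t\sigma + \sigma P_t$ and $\bar{b}_t := b_{(\xi_t)} + 2r_t b - \sigma\pi_t$, and bounding $|\bar{\sigma}_t^{*}\xi_t|^2 \leq |\xi_t|^2\|\bar{\sigma}_t\|^2$, the drift reduces to $e^{4\beta t}|\xi_t|^2\{4\beta|\xi_t|^2+6\|\bar{\sigma}_t\|^2+4\langle\xi_t,\bar{b}_t\rangle\}$. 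The key observation is that $6\|\bar{\sigma}_t\|^2+4\langle\xi_t,\bar{b}_t\rangle$ is precisely the combination appearing on the left-hand side of the $p=1$ instance of $(H10)_1$: the choice $\pi_t = \tfrac{M}{2}\sigma^{*}\xi_t$ yields $\sigma\pi_t = Ma\xi_t$, and the $-4M\langle a\xi_t,\xi_t\rangle$ contribution thereby produced in $\langle\xi_t,\bar{b}_t\rangle$ cancels cleanly with the $2M\langle a\xi_t,\xi_t\rangle$ on the right of $(H10)_1$. Scaling $y=\xi_t/|\xi_t|$ and using the homogeneity of $\bar{\sigma}_t$ and $\bar{b}_t$ in $\xi_t$, one obtains a drift bounded by $-e^{4\beta t}|\xi_t|^4$, which is non-positive. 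This yields the supermartingale property for $\{e^{4\beta t}B_2(\xi_t)\}$; the square-root process $\{e^{2\beta t}B_2^{1/2}(\xi_t)\}$ is then a local supermartingale via a second application of It\^o to $\sqrt{\cdot}$, whose concavity contributes an extra non-positive It\^o correction term.

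For Assertions (ii)--(iii), I integrate the supermartingale inequality: taking expectations yields $E_{\xi_0}\int_0^{\tau_2}e^{4\beta t}|\xi_t|^4\,dt \leq N(K_0,\lambda)B_2(\xi_0)$ from the strict term $-e^{4\beta t}|\xi_t|^4$ in the drift. For the sup estimate, I apply It\^o to $e^{4\beta t}|\xi_t|^4$ and take $\sup_{t\leq\tau_2}$, controlling the martingale increment by the BDG inequality: its quadratic variation is dominated by $K\int_0^{\tau_2}e^{8\beta t}|\xi_t|^8\,dt \leq K\sup_t(e^{4\beta t}|\xi_t|^4)\cdot\int_0^{\tau_2}e^{4\beta t}|\xi_t|^4\,dt$, so Young's inequality absorbs half the sup into the left-hand side and the remaining integral is handled by (ii), exactly as in the final steps of Lemma \ref{le:quasiestimate1}. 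The bounds on $|r_t|^4, |\pi_t|^4, \|P_t\|^4$ follow from the boundedness of $\rho, M, Q(\cdot,y)$ on $D_{\lambda^2}$ imposed by $(H10)_1$(i)--(ii), together with the linearity of these coefficients in $\xi_t$ and the boundedness of $\sigma$ in $(H3)$.

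The main obstacle is step (i): carefully tracking the algebraic cancellation between the $-4M\langle a\xi_t,\xi_t\rangle$ arising from $\sigma\pi_t$ and the $2M\langle a\xi_t,\xi_t\rangle$ appearing on the right of $(H10)_1$, and verifying that the leftover drift is truly a non-positive multiple of $|\xi_t|^4$ once the $4\beta|\xi_t|^4$ coming from the $e^{4\beta t}$ weight is incorporated. Once this calibration is settled, the remainder is standard It\^o/BDG bookkeeping patterned on Lemma \ref{le:quasiestimate1}.
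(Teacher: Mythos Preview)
Your proposal is correct and follows the same route as the paper: apply It\^o's formula to $e^{4\beta t}|\xi_t|^4$, invoke $(H10)_1$ with the choices \eqref{eq:coefficientinB2} to obtain a strictly negative drift (the paper records $\Gamma_2\leq(-4\beta-2)|\xi_t|^4$, hence $d(e^{4\beta t}|\xi_t|^4)\leq -2e^{4\beta t}|\xi_t|^4\,dt+dm_t$), then read off (ii) from this drift term and (iii) via BDG plus Young, finishing with the pointwise bound $|(r_t,\pi_t,P_t)|^4\leq K_0|\xi_t|^4$. One small correction: the $M$-terms do not cancel \emph{cleanly}---combining $-4M\langle a\xi_t,\xi_t\rangle$ from $\sigma\pi_t$ with $+2M\langle a\xi_t,\xi_t\rangle$ on the right of $(H10)_1$ leaves a residual $-2M\langle a\xi_t,\xi_t\rangle$---but this does not affect the supermartingale conclusion, and the paper treats it the same way.
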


\begin{proof}

 First of all, replacing $K_0$ by
 \begin{equation*}
  \max\left\{K_0,\sup\limits_{\tilde x\in D_{\lambda^2}}|\rho(\tilde x)|,\mathop{\sup\limits_{\tilde x\in D_{\lambda^2},}}_{y\in\mathbb{R}^d}\frac{\|Q(\tilde x,y)\|}{|y|},\sup\limits_{\tilde x\in D_{\lambda^2}}M(\tilde x)\right\}.
 \end{equation*}
 By It\^{o}'s formula, we have
 \begin{equation*}
  d|\xi_t|^4=\Gamma_2(X_t,\xi_t)dt+\sum_{m=1}^{d_1}\Lambda_2^m(X_t,\xi_t)dW_t^m, \quad t\in[0,\tau_2],
 \end{equation*}
 where
 \begin{equation*}
  \Lambda_2(X_t,\xi_t)=4|\xi_t|^2\langle\xi_t,\sigma_{(\xi_t)}+r_t\sigma+\sigma P_t\rangle.
 \end{equation*}
In view of $(H10)_1$, we have
 \begin{equation*}
  \Gamma_2(X_t,\xi_t)=|\xi_t|^2\left[4\langle\xi_t,b_{(\xi_t)}+2r_tb-\sigma\pi_t\rangle+6\|\sigma_{(\xi_t)}+r_t\sigma+\sigma P_t\|^2\right]\leq(-4\beta-2)|\xi_t|^4.
 \end{equation*}
 So
 \begin{equation}\label{eq:quasi2}
  d\left(e^{4\beta t}|\xi_t|^4\right)\leq -2e^{4\beta t}|\xi_t|^4dt+e^{4\beta t}\sum_{m=1}^{d_1}\Lambda_2^m(X_t,\xi_t)dW_t^m,\quad t\in[0,\tau_2].
 \end{equation}
Therefore, the process $\{e^{4\beta t}B_2(\xi_t), t\in[0,\tau_2]\}$ is a local super-martingale. In view of the concavity of the squared root function,  the process $\{e^{2\beta t}B_2^{1\over 2}(\xi_t), t\in [0,\tau_2]\}$ is a local super-martingale. Assertion (i) is proved.

In view of (\ref{eq:quasi2}), there exists a constant $N>0$ such that
 \begin{equation*}
  E_{\xi_0}\int_0^{\tau_2}e^{4\beta t}|\xi_t|^4dt\leq N\left(B_2(\xi_0)-E_{\xi_0}e^{4\beta\tau_2}B_2(\xi_{\tau_2})\right)\leq NB_2(\xi_0),\quad \forall \xi_0\in\mathbb{R}^d,
 \end{equation*}
 which implies Assertion (ii).

 Using Assertion (ii) and the BDG inequality, for $\tau_n:=\tau_2\wedge\inf\{t\geq 0: |\xi_t|\geq n\}$, we have
 \begin{eqnarray*}
   E_{\xi_0}\left[\sup\limits_{0\leq t\leq\tau_n} e^{4\beta t}|\xi_t|^4\right]&\leq&\displaystyle NB_2(\xi_0)+E_{\xi_0}\left[\sup\limits_{0\leq t\leq\tau_n}\left|\int_0^t e^{4\beta t}\Lambda_2(X_t,\xi_t)dW_t\right|\right]\\
   &\leq&\displaystyle NB_2(\xi_0)+NE_{\xi_0}\left[\left(\int_0^{\tau_n}e^{8\beta t}|\xi_t|^8dt\right)^{\frac{1}{2}}\right].
 \end{eqnarray*}
 Since the last expectation is written and estimated (using Cauchy inequality, and then Assertion (ii)) as follows
  \begin{eqnarray*}
   &\leq&\displaystyle E_{\xi_0}\left[\left(\sup\limits_{0\leq t\leq\tau_n} e^{2\beta t}|\xi_t|^2\right)\left(\int_0^{\tau_n}N^2 e^{4\beta t}|\xi_t|^4dt\right)^{\frac{1}{2}}\right]\\
   &\leq&\displaystyle {1 \over 2} E_{\xi_0}\left[\sup\limits_{0\leq t\leq\tau_n} e^{4\beta t}|\xi_t|^4\right]+\frac{N^2}{2}E_{\xi_0}\int_0^{\tau_n}e^{4\beta t}|\xi_t|^4dt\\
   &\leq& \ds {1 \over 2} E_{\xi_0}\left[\sup\limits_{0\leq t\leq\tau_n} e^{4\beta t}|\xi_t|^4\right]+ N^2NB_2(\xi_0),
  \end{eqnarray*}
  we conclude the following
  \begin{equation*}
    E_{\xi_0}\left[\sup\limits_{0\leq t\leq\tau_n} e^{4\beta t}|\xi_t|^4\right]\leq NB_2(\xi_0),\quad \forall \xi_0\in\mathbb{R}^d.
  \end{equation*}
 Thus, letting $n\to\infty$, we have
 \begin{equation*}
  E_{\xi_0}\left[\sup\limits_{0\leq t\leq\tau_2} e^{4\beta t}|\xi_t|^4\right]\leq NB_2(\xi_0),\quad \forall \xi_0\in\mathbb{R}^d.
 \end{equation*}
By definition, we have for $t\in[0,\tau_2]$, $$|(r_t, \pi_t, P_t)|^4\leq K_0|\xi_t|^4.$$
Hence, we have for $(x,\xi_0)\in D_{\lambda^2}\times\mathbb{R}^d$
\begin{equation*}
 E_{x,\xi_0}\left[\sup\limits_{0\leq t\leq\tau_2} e^{4\beta t}|(r_t, \pi_t, P_t)|^4\right]\leq K_0 E_{\xi_0}\left[\sup\limits_{0\leq t\leq\tau_2} e^{4\beta t}|\xi_t|^4\right]\leq NB_2(\xi_0),
\end{equation*}
which proves Assertion (iii).
\end{proof}

 As shown in both Lemmas \ref{le:quasiestimate1} and \ref{le:quasiestimate2}, both barrier functions $B_1$ and $B_2$ play a crucial role  in the fourth-order moment estimates of first quasi-derivatives $\xi_t$, which are used to estimate the gradient of $u$.

To estimate the Hessian of $u$, the second quasi-derivative is introduced, and the eighth-order  moment estimates of first quasi-derivatives  have to be considered due to standard BSDE estimates for second-order difference.
Define both functions
  \begin{equation*}
    B_3(x,y):=\left(\lambda+\sqrt{\psi(x)}+\psi(x)\right)|y|^8+K_1\varphi^{\frac{15}{2}}(x)\frac{\psi_{(y)}^8(x)}{\psi^7(x)},\quad (x, y)\in D_{\delta_1}^\lambda\times \mathbb{R}^d,
  \end{equation*}
where $K_1\in[1,\infty)$ is a constant depending only on $K_0$;
and
 \begin{equation*}
  B_4(y):=\lambda^{\frac{3}{4}}|y|^8, \quad  y\in\mathbb{R}^d.
 \end{equation*}
With the help of both barrier functions $B_3$ and $B_4$, we can extend both Lemmas~\ref{le:quasiestimate1} and \ref{le:quasiestimate2} to estimate the eighth-order moment of $\xi_t$ and the fourth-order  moment of  $\eta_t$.

\begin{Lemma}\label{le:quasiestimate3}
 Let the assumptions of Lemma \ref{le:quasiestimate1} be satisfied.  Define $X_t$ by \eqref{eq:SDE2}, the first quasi-derivative $\xi_t$ by (\ref{eq:Qua1}), and the second quasi-derivatives $\eta_t$ by (\ref{eq:Qua3}). For $(x,\xi_0,t)\in D_{\delta_1}^\lambda\times\mathbb{R}^d\times[0,\tau_1]$, let the coefficients $r_t,\ \tilde r_t,\ \tilde\pi_t,\ P_t,\ \tilde P_t$ be defined as \eqref{eq:coefficientinB1} in Lemma \ref{le:quasiestimate1} and define $\pi_t:=8[\psi_{(\sigma)}\psi_{(\xi_t)}(\varphi\psi)^{-1}](X_t)$. Then for sufficiently small $\lambda$, we have for $(x, \xi_0)\in D_{\delta_1}^\lambda\times\mathbb{R}^d$ and $\eta_0=0$

 {\rm (i)} the process $(B_3(X_t,\xi_t), B_3^{1\over 2}(X_t,\xi_t)), t\in[0,\tau_1]$ is a local super-martingale;

 {\rm (ii)} $\ds E_{x,\xi_0}\left[\sup\limits_{0\leq t\leq\tau_1}\left(|\xi_t|^8+\frac{\psi_{(\xi_t)}^8(X_t)}{\psi^8(X_t)}\right)+\int_0^{\tau_1}|\xi_t|^8+\frac{\psi_{(\xi_t)}^8(X_t)}{\psi^8(X_t)}dt\right]\leq N(K_0,\lambda)B_3(x,\xi_0)$;

 {\rm (iii)} $E_0\left[\sup\limits_{0\leq t\leq\tau_1}|\eta_t|^4\right]\leq N(K_0,\lambda)B_3(x,\xi_0)$.
\end{Lemma}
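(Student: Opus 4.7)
The plan is to follow the template of Lemma~\ref{le:quasiestimate1} but with the exponent on $|y|$ raised from $4$ to $8$ (and correspondingly $\psi_{(y)}^4\psi^{-3}\varphi^{7/2}$ replaced by $\psi_{(y)}^8\psi^{-7}\varphi^{15/2}$), then handle $\eta_t$ through the linear SDE~\eqref{eq:Qua3} with $\eta_0=0$.

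For assertion~\textrm{(i)}, I would apply It\^o's formula to $B_3(X_t,\xi_t)$. Split the drift into four pieces: (a) the $\lambda|\xi_t|^8$ part, (b) the $(\sqrt{\psi}+\psi)|\xi_t|^8$ part, (c) the $-\tfrac{A}{8}\psi^{-3/2}|\xi_t|^8$-type part from $d\sqrt\psi$, and (d) the cross term from $4\cdot 8|\xi_t|^6\bar\sigma_t\psi_{(\sigma)}/(2\sqrt\psi)$. Using $\|\bar\sigma_t\|+|\bar b_t|\le K(|\xi_t|+|\psi_{(\xi_t)}|/\psi)$ from~(\ref{eq:quasi1}) and Young's inequality with appropriate weights (so that (c) dominates everything with $|\xi_t|^8\psi^{-3/2}$ and the cross terms fall into either $|\xi_t|^8\psi^{-3/2}$ or $\varphi^{13/2}\psi_{(\xi_t)}^8\psi^{-8}$), I would control the $|\xi_t|^8$ part. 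For the $\psi_{(y)}^8\psi^{-7}\varphi^{15/2}$ part, apply It\^o's formula using \eqref{eq:estin3_in_B1} for $d\psi_{(\xi_t)}$. Here is the reason for doubling $\pi_t$: the drift now contains $-8\psi_{(\sigma_i)}\pi_t^i\psi_{(\xi_t)}^7\psi^{-7}\varphi^{15/2}$, which with $\pi_t=8\psi_{(\sigma)}\psi_{(\xi_t)}/(\varphi\psi)$ produces $-64 A\psi_{(\xi_t)}^8\psi^{-8}\varphi^{13/2}$, enough to dominate all remaining positive contributions from $d\varphi^{15/2}$, $d\psi^{-7}$ and $d\psi_{(\xi_t)}^8$ once we choose $\lambda$ small and $K_1\geq K$. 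Collecting,
\[
dB_3(X_t,\xi_t)\le -c\Bigl(|\xi_t|^8\psi^{-3/2}+\varphi^{13/2}\psi_{(\xi_t)}^8\psi^{-8}\Bigr)dt+dm_t
\]
for a local martingale $m_t$, which gives the super-martingale property, and the concavity of $\sqrt\cdot$ gives it for $B_3^{1/2}$.

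Assertion~\textrm{(ii)} has two parts. The time-integral bound follows by taking expectations in the above drift inequality on $[0,\tau_1]$. For the supremum bound, repeat the BDG argument used in Lemma~\ref{le:quasiestimate1}: apply It\^o's formula separately to $|\xi_t|^8$ and to $(\psi_{(\xi_t)}/\psi)^8$, the driftless parts being martingales, bound the martingale parts by BDG with exponent $1/2$, then apply Cauchy--Schwarz so that $\sup_t|\xi_t|^8$ appears on both sides and can be absorbed with coefficient $\tfrac12$, while the remaining integral is controlled by the already-established time-integral bound.

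For assertion~\textrm{(iii)}, recall that $\eta_t$ solves the linear SDE \eqref{eq:Qua3} with $\eta_0=0$; with the choice \eqref{eq:coefficientinB1} of $r_t,\pi_t,P_t$ and the present coefficients, every inhomogeneous term is of the form $F(X_t,\xi_t)$ where $|F|\le N(|\xi_t|^2+\psi_{(\xi_t)}^2/\psi^2)$ (the quadratic dependence comes from $\sigma_{(\xi_s)(\xi_s)}$, $r_s\sigma_{(\xi_s)}$, $r_s^2\sigma$, $\sigma P_s^2$, $\sigma_{(\xi_s)}P_s$, etc.; note $\tilde r_t=\psi_{(\xi_t)}^2/\psi^2$ and $\tilde\pi_t=\tilde P_t=0$). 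Then the standard SDE estimate applied to the linear equation for $\eta_t$ (Gronwall plus BDG, with the coefficient of $\eta_t$ bounded by $K_0$) yields
\[
E_0\Bigl[\sup_{0\le t\le\tau_1}|\eta_t|^4\Bigr]\le N\,E_{x,\xi_0}\int_0^{\tau_1}\!\!\Bigl(|\xi_t|^8+\psi_{(\xi_t)}^8\psi^{-8}\Bigr)\,dt,
\]
and the right-hand side is bounded by $NB_3(x,\xi_0)$ by assertion~\textrm{(ii)}.

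The main obstacle will be the bookkeeping in step~\textrm{(i)}: verifying that the larger coefficient $8$ in $\pi_t$ produces exactly enough negativity in the $\psi_{(\xi_t)}^8\psi^{-8}$ term (at the new exponent $15/2$ on $\varphi$) to absorb every Young-inequality remainder and every cross term generated by expanding $d[\varphi^{15/2}\psi_{(\xi_t)}^8\psi^{-7}]$; the choice of exponents $(15/2,-7)$ is dictated by requiring that no half-integer powers of $\psi$ appear in the residual drift, mirroring the $(7/2,-3)$ choice in Lemma~\ref{le:quasiestimate1}.
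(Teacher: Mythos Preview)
Your treatment of assertions~(i) and~(ii) is correct and essentially matches the paper's argument: the paper records the same displayed inequality
\[
dB_3(X_t,\xi_t)\le -\Bigl(\tfrac{1}{64}|\xi_t|^8\psi^{-3/2}+\tfrac12\varphi^{13/2}\psi_{(\xi_t)}^8\psi^{-8}\Bigr)\,dt+dm_t
\]
and then repeats the BDG/Cauchy absorption argument from Lemma~\ref{le:quasiestimate1}. Your explanation of why $\pi_t$ must be doubled is exactly the right one.

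The gap is in assertion~(iii). A ``Gronwall plus BDG'' argument for the linear SDE of $\eta_t$ on the random interval $[0,\tau_1]$ does \emph{not} give the claimed bound with a constant $N$ independent of the path: the linear coefficients $\sigma_{(\eta_t)}$, $b_{(\eta_t)}$ produce a positive drift $C|\eta_t|^4$, and Gronwall would yield a factor $e^{C\tau_1}$, which is random and has no reason to have finite expectation (we only know $E[\tau_1^p]<\infty$). So the inequality you wrote down, $E_0\sup_{t\le\tau_1}|\eta_t|^4\le N\,E\int_0^{\tau_1}(|\xi_t|^8+\psi_{(\xi_t)}^8\psi^{-8})\,dt$, cannot be obtained this way.

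The paper circumvents this by inserting the bounded multiplicative weight $e^{2\varphi(X_t)}$, where $\varphi=\lambda^2+\psi-\psi^2/(4\lambda)$. Applying It\^o's formula to $e^{2\varphi}|\eta_t|^4$, the drift of the weight contributes an extra term $-A/(2\lambda)$ (coming from the second derivative $\varphi''=-1/(2\lambda)$ together with $A=\sum_i\psi_{(\sigma_i)}^2\ge1$ near the boundary). For $\lambda$ small this term dominates the positive constant $N=N(K_0)$ arising from the linear coefficients in $\eta_t$, so the full coefficient of $|\eta_t|^4$ in the drift becomes $\le -\lambda_0<0$. That damping replaces Gronwall and yields first a bound on $E[e^{2\varphi}|\eta_t|^4]$ and on $E\int_0^{\tau_1}e^{2\varphi}|\eta_s|^4\,ds$ in terms of the forcing, and then the supremum bound via BDG exactly as you outlined. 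Since $e^{2\varphi}$ is bounded above and below on $D_{\delta_1}^\lambda$, this gives~(iii). This is the missing ingredient you need to add.
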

\begin{proof}
Repeating the arguments between \eqref{eq:quasi1} and \eqref{eq:estin1_in_B1}, we have the analogue of \eqref{eq:estimate_1}
\begin{equation*}
 dB_3(X_t,\xi_t)\leq \left(-\frac{1}{64}\frac{|\xi_t|^8}{\psi^{\frac{3}{2}}}-\frac{1}{2}\varphi^{\frac{13}{2}}\frac{\psi_{(\xi_t)}^8}{\psi^8}\right)dt+dm_t\leq dm_t, \quad t\in[0,\tau_1],
\end{equation*}
where $\{m_t, t\in[0,\tau_1]\}$ is a local martingale. Then, following the arguments next to formula (\ref{eq:estimate_1}), we can prove Assertion (i).

Analogous to the proof of  Assertions (ii) and (iii) of Lemma \ref{le:quasiestimate1}, we have Assertion (ii).

Now we estimate the moments of the second quasi-derivative $\eta_t$.
By \eqref{eq:Qua3}, we have
\begin{equation*}
 d\eta_t=\left[\sigma_{(\eta_t)}+G(X_t,\xi_t)\right]dW_t+\left[b_{(\eta_t)}+H(X_t,\xi_t)\right]dt, \quad t\in[0,\infty),
\end{equation*}
with
\begin{eqnarray*}
 G(X_t,\xi_t)&=&\ds \sigma_{(\xi_t)(\xi_t)}+2r_t\sigma_{(\xi_t)}+\left(2\sigma_{(\xi_t)}+2r_t\sigma+\sigma P_t\right)P_t+(\tilde r_t-r_t^2),\\
 H(X_t,\xi_t)&=&\ds b_{(\xi_t)(\xi_t)}+4r_tb_{(\xi_t)}+2\tilde r_t b-2\sigma_{(\xi_t)}\pi_t-2r_t\sigma\pi_t-2\sigma P_t\pi_t,\quad t\geq 0.
\end{eqnarray*}
Then, we have the estimates
\begin{equation*}
 ||G||\leq N|\xi_t|\left(|\xi_t|+\frac{|\psi_{(\xi_t)}|}{\psi}\right),\quad |H|\leq N\left(|\xi_t|^2+\frac{\psi_{(\xi_t)}^2}{\psi^2}\right),\quad t\in [0,\tau_1].
\end{equation*}
Hence, It\^{o}'s formula implies
\begin{eqnarray*}
 d\left(e^{2\varphi}|\eta_t|^4\right)&\leq& e^{2\varphi}\left[2\left(1-\frac{\psi}{2\lambda}\right)\cL\psi+\left(1-\frac{\psi}{2\lambda}\right)^2 A+N-\frac{A}{2\lambda}\right]|\eta_t|^4dt\\
 &&+Ne^{2\varphi}\left(|\xi_t|^8+\frac{\psi_{(\xi_t)}^8}{\psi^8}\right)dt+2e^{2\varphi}|\eta_t|^4(1-\frac{\psi}{2\lambda})\psi_{(\sigma)}dW_t\\
 &&+4e^{2\varphi}|\eta_t|^2\langle \eta_t,[\sigma_{(\eta_t)}+G(X_t,\xi_t)]dW_t\rangle, \quad t\in[0,\tau_1].
\end{eqnarray*}
For sufficiently small $\lambda$, there exists a positive constant $\lambda_0$, such that for $(x, \xi_0)\in D_{\delta_1}^\lambda\times\mathbb{R}^d$ and $\eta_0=0$
\begin{equation}\label{eq:estin1_in_B3}
 E_{x,0}\left[e^{2\varphi}|\eta_t|^4\right]+\lambda_0E_{x,0}\int_0^te^{2\varphi}|\eta_s|^4ds\leq NE_{x,\xi_0}\int_0^t e^{2\varphi}\left(|\xi_t|^8+\frac{\psi_{(\xi_t)}^8}{\psi^8}\right)ds.
\end{equation}
Next, using Assertion (ii), formula \eqref{eq:estin1_in_B3}, BDG inequality and Cauchy inequality, for $\tau_n:=\tau_1\wedge\{t\geq 0: e^{\varphi}|\eta_t|^2\geq n\}$ and $\eta_0=0$, we have
\begin{eqnarray}
&&\ds E_{x,0}\left[\sup\limits_{0\leq t\leq\tau_n}\left|\int_0^t 2 e^{2\varphi}\left(1-\frac{\psi}{2\lambda}\right)\psi_{(\sigma)}|\eta_s|^4dW_s\right|\right]\nonumber\\
 &\leq&\ds {1 \over 3} E_{x,0}\left[\sup\limits_{0\leq t\leq\tau_n} e^{2\varphi}|\eta_t|^4\right]+3N^2E_{x,0}\int_0^{\tau_n}e^{2\varphi}|\eta_t|^4dt,\label{eq:estin2_in_B3}
\end{eqnarray}
and
\begin{eqnarray}
 &&\ds E_{x,\xi_0,0}\left[\sup\limits_{0\leq t\leq\tau_n}\left|\int_0^t4e^{2\varphi}|\eta_t|^2\langle \eta_t,[\sigma_{(\eta_t)}+G(X_t,\xi_t)]dW_t\rangle\right|\right]\nonumber\\
 &\leq&\ds E_{x,\xi_0,0}\left\{\sup\limits_{0\leq t\leq\tau_n} (e^{\varphi}|\eta_t|^2)\left[\int_0^{\tau_n}N^2e^{2\varphi}|\eta_t|^4+N^3e^{2\varphi}|\eta_t|^2|\xi_t|^2\left(|\xi_t|^2+\frac{|\psi_{(\xi_t)}|^2}{\psi^2}\right)dt\right]^{1\over2}\right\}\nonumber\\
 &\leq&\ds {1\over 3} E_{x,0}\left[\sup\limits_{0\leq t\leq\tau_n} e^{2\varphi}|\eta_t|^4\right]+N^2E_{x,0}\int_0^{\tau_n}e^{2\varphi}|\eta_t|^4dt\nonumber\\
 &&\ds+2N^4E_{x,\xi_0}\int_0^{\tau_n}e^{2\varphi}\left(|\xi_t|^8+\frac{|\psi_{(\xi_t)}|^8}{\psi^8}\right)dt.\nonumber\\
 &\leq&\ds {1\over 3} E_{x,0}\left[\sup\limits_{0\leq t\leq\tau_n} e^{2\varphi}|\eta_t|^4\right]+ (N^2N+2N^4)E_{x,\xi_0}\int_0^{\tau_n}e^{2\varphi}\left(|\xi_t|^8+\frac{|\psi_{(\xi_t)}|^8}{\psi^8}\right)dt\nonumber\\
 &\leq&\ds {1\over 3} E_{x,0}\left[\sup\limits_{0\leq t\leq\tau_n} e^{2\varphi}|\eta_t|^4\right]+ (N^2N+2N^4)NB_3(x,\xi_0).
 \label{eq:estin3_in_B3}
\end{eqnarray}
Then by Assertion \rm{(ii)}, formulas \eqref{eq:estin2_in_B3} and \eqref{eq:estin3_in_B3}, we have
\begin{eqnarray*}
 E_{x,0}\left[\sup\limits_{0\leq t\leq\tau_n} e^{2\varphi}|\eta_t|^4\right]&\leq&\displaystyle NB_3(x,\xi_0)+E_{x,0}\left[\sup\limits_{0\leq t\leq\tau_n}\left|\int_0^t 2 e^{2\varphi}\left(1-\frac{\psi}{2\lambda}\right)\psi_{(\sigma)}|\eta_s|^4dW_s\right|\right]\\
 &&\displaystyle +E_{x,\xi_0,0}\left[\sup\limits_{0\leq t\leq\tau_n}\left|\int_0^t4e^{2\varphi}|\eta_t|^2\langle \eta_t,[\sigma_{(\eta_t)}+G(X_t,\xi_t)]dW_t\rangle\right|\right]\\
 &\leq &\ds NB_3(x,\xi_0),\quad \forall (x, \xi_0)\in D_{\delta_1}^\lambda\times\mathbb{R}^d.
\end{eqnarray*}
Thus, letting $n\to\infty$, Assertion {\rm (iii)} is proved.
\end{proof}

\begin{Lemma}\label{le:quasiestimate4}
 Let $(H3)$ and $(H10)_2$ be satisfied.
Define $X_t$ by \eqref{eq:SDE2}, the first quasi-derivative $\xi_t$ by \eqref{eq:Qua1}, and the second quasi-derivatives $\eta_t$ by (\ref{eq:Qua3}). For $(\tilde x,y,t)\in D_{\lambda^2}\times\mathbb{R}^d\times[0,\infty)$, define functions $r(\tilde x,y),\ \pi(\tilde x,y),\ P(\tilde x,y)$ by \eqref{eq:coefficientinB2}. For $(x,\xi_0,t)\in D_{\lambda^2}\times\mathbb{R}^d\times[0,\tau_2]$ and $\eta_0=0$, let the  coefficients $r_t$, $\pi_t$ and $P_t$ be defined by \eqref{eq:coefficientinB2} in Lemma \ref{le:quasiestimate2}, and define
\begin{equation}\label{eq:coefficientinB4}
  \tilde r_t:=r(X_t,\eta_t),\quad
  \tilde\pi_t:=\pi(X_t,\eta_t),\quad
  \tilde P_t:=P(X_t,\eta_t).
\end{equation}
Then for sufficiently small $\lambda$, we have for $(x,\xi_0)\in D_{\lambda^2}\times\mathbb{R}^d$ and $\eta_0=0$,

 {\rm (i)} the process $(e^{8\beta t}B_4(\xi_t), e^{4\beta t}B_4^{1\over 2}(\xi_t)),\ t\in[0,\tau_2]$ is a local super-martingale;

 {\rm (ii)} $\ds E_{\xi_0}\left[\sup\limits_{0\leq t\leq\tau_2}e^{8\beta t}|\xi_t|^8\right]+E_{\xi_0}\int_0^{\tau_2}e^{8\beta t}|\xi_t|^8dt\leq N(K_0,\lambda)B_4(\xi_0)$;

 {\rm (iii)} $E_{x,0}\left[\sup\limits_{0\leq t\leq\tau_2}e^{8\beta t}(|\eta_t|^4+|\tilde r_t|^4)\right]\leq N(K_0,\lambda)B_4(\xi_0)$.
\end{Lemma}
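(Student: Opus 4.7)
The plan is to mirror the structure of Lemma \ref{le:quasiestimate2}, but pushed up to the eighth moment of $\xi_t$, and then to treat the second quasi-derivative analogously to Assertion~(iii) of Lemma~\ref{le:quasiestimate3}. Throughout, I will exploit the specific choices $r_t=\langle\rho,\xi_t\rangle$, $\pi_t=\tfrac{M}{2}\sigma^{*}\xi_t$, $P_t=Q(X_t,\xi_t)$, and the corresponding tilde-versions obtained by swapping $\xi_t$ for $\eta_t$.

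First, for Assertion~(i) I would apply It\^o's formula to $|\xi_t|^8=(|\xi_t|^2)^4$ and use $|\bar\sigma_t^{*}\xi_t|^2\le |\xi_t|^2\|\bar\sigma_t\|^2$ to obtain the clean drift bound
\begin{equation*}
8|\xi_t|^6\langle\xi_t,\bar b_t\rangle+28|\xi_t|^6\|\bar\sigma_t\|^2,
\end{equation*}
where $\bar\sigma_t=\sigma_{(\xi_t)}+\langle\rho,\xi_t\rangle\sigma+\sigma Q(\cdot,\xi_t)$ and $\bar b_t=b_{(\xi_t)}+2\langle\rho,\xi_t\rangle b-M a\,\xi_t$. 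Substituting the chosen $\pi_t$ gives a cancellation $-8M|\xi_t|^6\langle a\xi_t,\xi_t\rangle$, and applying $(H10)_2$ to the unit vector $y=\xi_t/|\xi_t|$ (and rescaling by $|\xi_t|^8$) yields the drift bound $\le(-8\beta-1)|\xi_t|^8-4M|\xi_t|^6\langle a\xi_t,\xi_t\rangle\le(-8\beta-1)|\xi_t|^8$. Multiplying by $e^{8\beta t}$ produces the inequality
\begin{equation*}
d\bigl(e^{8\beta t}|\xi_t|^8\bigr)\le -e^{8\beta t}|\xi_t|^8\,dt+dm_t,
\end{equation*}
so $\{e^{8\beta t}B_4(\xi_t)\}$ is a local super-martingale; concavity of $\sqrt{\cdot}$ gives the second one.

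For Assertion~(ii), taking expectation against a localizing sequence and using the negative drift coefficient gives
$E_{\xi_0}\int_0^{\tau_2}e^{8\beta t}|\xi_t|^8 dt\le N B_4(\xi_0)$. The supremum estimate then follows exactly as in Lemma~\ref{le:quasiestimate2}(iii): apply BDG to the martingale part $\int_0^{\cdot}e^{8\beta s}\Lambda_s dW_s$, write the resulting $E[(\int e^{16\beta s}|\xi_s|^{14}\|\bar\sigma_s\|^2 ds)^{1/2}]$ as a product of $\sup_{[0,\tau_n]}e^{4\beta t}|\xi_t|^4$ with $(\int e^{8\beta s}|\xi_s|^8 ds)^{1/2}$, apply Cauchy--Schwarz, absorb one half of the supremum term on the left, and conclude by letting $n\to\infty$.

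Assertion~(iii) is the technical core. I would rewrite \eqref{eq:Qua3} as
\begin{equation*}
d\eta_t=[\sigma_{(\eta_t)}+\langle\rho,\eta_t\rangle\sigma+\sigma Q(\cdot,\eta_t)+G(X_t,\xi_t)]\,dW_t+[b_{(\eta_t)}+2\langle\rho,\eta_t\rangle b-M a\,\eta_t+H(X_t,\xi_t)]\,dt,
\end{equation*}
where $G$ and $H$ gather the source terms that are at most quadratic in $\xi_t$. Applying It\^o to $|\eta_t|^4$, the drift splits into (a) a ``diagonal'' $\eta$-part treated by $(H10)_2$, and (b) cross and pure-$\xi$ source terms. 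For (a), I note that dividing $(H10)_2$ by $2$ and using $14\|\cdot\|^2\ge 6\|\cdot\|^2$ yields the weakened inequality $6\|\cdot\|^2+4\langle y,\cdot\rangle\le-4\beta-\tfrac12+2M\langle ay,y\rangle$, which bounds the $\eta$-diagonal drift by $(-4\beta-\tfrac12)|\eta_t|^4-2M|\eta_t|^2\langle a\eta_t,\eta_t\rangle$. For (b), Young's inequality bounds every cross term of the forms $|\eta_t|^2\langle\eta_t,H\rangle$, $|\eta_t|^2\langle\bar\sigma_\eta,G\rangle$, $|\langle\eta_t,G\rangle|^2$ and $|\eta_t|^2\|G\|^2$ by $\varepsilon|\eta_t|^4+N|\xi_t|^8$. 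Multiplying by $e^{8\beta t}$, absorbing $\varepsilon$ in the negative coefficient $4\beta-\tfrac12<0$, taking expectation and invoking Assertion~(ii) gives
\begin{equation*}
E_{x,0}\bigl[e^{8\beta(t\wedge\tau_2)}|\eta_{t\wedge\tau_2}|^4\bigr]+\lambda_0\int_0^{t\wedge\tau_2}E_{x,0}[e^{8\beta s}|\eta_s|^4]\,ds\le NB_4(\xi_0).
\end{equation*}
The $\sup$ estimate then follows by BDG applied to each of the two martingale parts (as in \eqref{eq:estin2_in_B3}--\eqref{eq:estin3_in_B3} of Lemma~\ref{le:quasiestimate3}(iii)), again using Cauchy--Schwarz to absorb $\tfrac12 E[\sup e^{8\beta t}|\eta_t|^4]$ on the left and bounding the remainder via Assertion~(ii). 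The $|\tilde r_t|^4$ bound is then immediate, since $|\tilde r_t|=|\langle\rho,\eta_t\rangle|\le K_0|\eta_t|$.

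The main obstacle is the bookkeeping in step~(iii): one must carefully verify that every source term in the $\eta$-dynamics can be dominated by either $\varepsilon|\eta_t|^4$ (to be absorbed) or a multiple of $|\xi_t|^8$ (to be handled by~(ii)), and that the constant $\lambda_0>0$ in the absorbed drift survives after all the BDG/Cauchy--Schwarz juggling. The rest is a direct adaptation of the templates laid down in Lemmas~\ref{le:quasiestimate2} and~\ref{le:quasiestimate3}.
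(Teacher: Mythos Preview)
Your proposal is correct and follows essentially the same route as the paper: It\^o's formula on $|\xi_t|^8$ combined with $(H10)_2$ for Assertion~(i), the BDG/Cauchy--Schwarz absorption trick for the supremum in Assertion~(ii), and for Assertion~(iii) the decomposition of the $\eta$-dynamics into a ``diagonal'' part $\tilde\sigma_t,\tilde b_t$ (handled by $(H10)_2$) plus source terms $G_t,H_t$ bounded by $N|\xi_t|^2$, followed by the same BDG argument as in Lemma~\ref{le:quasiestimate3}(iii). Your explicit derivation of the weakened inequality $6\|\cdot\|^2+4\langle\cdot\rangle\le 14\|\cdot\|^2+4\langle\cdot\rangle\le(-4\beta-\tfrac12)+2M\langle a\cdot,\cdot\rangle$ from $(H10)_2$ is a detail the paper leaves implicit (it simply writes ``using $(H10)_2$'' and records the drift bound $(4\beta-1)|\eta_t|^4$), but the mechanism is the same.
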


\begin{proof}

By It\^{o}'s formula, we have
 \begin{equation*}
  d|\xi_t|^8= \Gamma_3(X_t,\xi_t)dt+\Lambda_3(X_t,\xi_t)dW_t,\quad t\in[0,\tau_2],
 \end{equation*}
 where by $(H10)_2$, we have
 \begin{equation*}
  \Gamma_3(X_t,\xi_t)=8|\xi_t|^6\left\langle\xi_t,\left(b_{(\xi_t)}+2r_tb-\sigma\pi_t\right)\right\rangle+28|\xi_t|^6\|\sigma_{(\xi_t)}+r_t\sigma+\sigma P_t\|^2\leq (-8\beta-4)|\xi_t|^2.
 \end{equation*}
 Then repeating the arguments next to formula (\ref{eq:quasi2}), we can prove Assertion {\rm (i)}.

Similarly, Assertion {\rm (ii)} can be proved in the same way as Lemma \ref{le:quasiestimate2}.

Now, we estimate the moments of the second quasi-derivative $\eta_t$. By \eqref{eq:Qua3}, we have
\begin{eqnarray*}
  d\eta_t&=&\ds \left[\sigma_{(\eta_t)}+\tilde r_t\sigma+\sigma\tilde P_t+\sigma_{(\xi_t)(\xi_t)}+2r_t\sigma_{(\xi_t)}-r_t^2\sigma+2\sigma_{(\xi_t)}P_t+2r_t\sigma P_t+\sigma P_t^2\right]dW_t\\
  &&\ds+\left[b_{(\eta_t)}+2\tilde r_t b-\sigma\tilde\pi_t+b_{(\xi_t)(\xi_t)}
  +4r_tb_{(\xi_t)}-2\sigma_{(\xi_t)}\pi_t-2r_t\sigma\pi_t-2\sigma P_t\pi_t\right]dt\\
  &=&\ds \left[\tilde\sigma_t+G_t \right]dW_t+\left[\tilde b_t+H_t\right]dt,\quad t\geq 0,
\end{eqnarray*}
where
\begin{equation*}
 \tilde\sigma_t=\sigma_{(\eta_t)}+\tilde r_t\sigma+\sigma\tilde P_t, \quad \tilde b_t=b_{(\eta_t)}+2\tilde r_t b-\sigma\tilde\pi_t, t\geq 0,
\end{equation*}
\begin{eqnarray*}
  & &G_t=\sigma_{(\xi_t)(\xi_t)}+2r_t\sigma_{(\xi_t)}-r_t^2\sigma+(2\sigma_{(\xi_t)}+2r_t\sigma+\sigma P_t)P_t,\\
  & &H_t=b_{(\xi_t)(\xi_t)}+4r_t b_{(\xi_t)}-2(\sigma_{(\xi_t)}+r_t\sigma-\sigma P_t)\pi_t,\quad t\geq 0.
\end{eqnarray*}
Assuming that
\begin{equation*}
 \sup\limits_{\tilde x\in D_{\lambda^2}}|\rho(\tilde x)|\leq K_0,\quad \sup\limits_{\tilde x\in D_{\lambda^2}}\|Q(\tilde x,y)\|\leq K_0|y|,\quad \sup\limits_{\tilde x\in D_{\lambda^2}}M(\tilde x)\leq K_0,
\end{equation*}
by $(H3)$, it is not hard to see that
\begin{equation}\label{eq:quasi4}
 \|G_t\|\leq N|\xi_t|^2,\quad |H_t|\leq N|\xi_t|^2,\quad t\in [0,\tau_2].
\end{equation}
So, using $(H10)_2$ and formula (\ref{eq:quasi4}), we have
\begin{eqnarray*}
 d\left(e^{8\beta t}|\eta_t|^4\right)&=&\ds |\eta_t|^2\left[8\beta|\eta_t|^2+4\langle\eta_t,\tilde b_t+H_t\rangle+6\|\tilde\sigma_t+G_t\|^2\right]e^{8\beta t}dt\\
 & &\ds+4|\eta_t|^2e^{8\beta t}\langle\eta_t,(\tilde\sigma_t+G_t)dW_t\rangle\\
 &\leq&\ds \left[(4\beta-1)|\eta_t|^4+N|\xi_t|^8\right]e^{8\beta t}dt+4|\eta_t|^2e^{8\beta t}\langle\eta_t,(\tilde\sigma_t+G_t)dW_t\rangle, \quad t\in[0,\tau_2],
\end{eqnarray*}
where $4\beta-1\leq -1$. Then following the arguments next to formula \eqref{eq:estin1_in_B3}, we prove Assertion {\rm (iii)}.
\end{proof}


We have  more  moment estimates for quasi-derivatives.

\begin{Corollary}
 In addition to $(H3)$ and $(H9)$, let $(H10)_p$ be satisfied for some positive $p$. Define both functions
 \begin{equation*}
   B_{2p-1}(x,y):=\left(\lambda+\sqrt{\psi(x)}+\psi(x)\right)|y|^{4p}+K_1\varphi^{\frac{8p-1}{2}}(x)\frac{\psi_{(y)}^{4p}(x)}{\psi^{4p-1}(x)},\quad (x,y)\in D_{\delta_1}^\lambda\times\mathbb{R}^d,
 \end{equation*}
 where $K_1\in[1,\infty)$ is a constant depending only on $K_0$; and
 \begin{equation*}
  B_{2p}(y):=\lambda^{\frac{3}{4}}|y|^{4p},\quad  y\in\mathbb{R}^d.
 \end{equation*}
 Define $X_t$ by \eqref{eq:SDE2}, the first quasi-derivative $\xi_t$ by \eqref{eq:Qua1}, and the second quasi-derivative $\eta_t$ by \eqref{eq:Qua3}. For $(x, \xi_0,t)\in D_{\delta_1}^\lambda\times\mathbb{R}^d\times[0,\tau_1]$, define
 $$\pi_t:=4p[\psi_{(\sigma)}\psi_{(\xi_t)}(\varphi\psi)^{-1}](X_t)$$ and choose other coefficients $r_t,\ \tilde r_t,\ \tilde\pi_t,\ P_t,\ \tilde P_t$ defined as \eqref{eq:coefficientinB1} in Lemma \ref{le:quasiestimate1}. For $(x, \xi_0,t)\in D_{\lambda^2}\times\mathbb{R}^d\times[0,\tau_2]$, choose the coefficients $r_t,\ \tilde r_t,\ \pi_t,\ \tilde\pi_t,\ P_t,\ \tilde P_t$ defined as \eqref{eq:coefficientinB2} and \eqref{eq:coefficientinB4} in Lemmas \ref{le:quasiestimate2} and \ref{le:quasiestimate4}. Then for sufficiently small $\lambda$, we have

 {\rm (i)} for $(x, \xi_0)\in D_{\delta_1}^\lambda\times\mathbb{R}^d$ and $\eta_0=0$, the process $(B_{2p-1}(X_t,\xi_t), B_{2p-1}^{1\over 2}(X_t,\xi_t)), t\in[0,\tau_1]$ is a local super-martingale;
 \begin{eqnarray}
  &&\ds E_{x,\xi_0,0}\left[\sup\limits_{0\leq t\leq\tau_1}\left(|\xi_t|^{4p}+\frac{\psi_{(\xi_t)}^{4p}(X_t)}{\psi^{4p}(X_t)}+|\eta_t|^{2p}\right)\right]
 +E_{x,\xi_0}\int_0^{\tau_1}\left(|\xi_t|^{4p}+\frac{\psi_{(\xi_t)}^{4p}(X_t)}{\psi^{4p}(X_t)}\right)dt\nonumber\\
  &\leq&\ds N(K_0,\lambda) B_{2p-1}(x,\xi_0);
 \end{eqnarray}

 {\rm (ii)} for $(x, \xi_0)\in D_{\lambda^2}\times\mathbb{R}^d$ and $\eta_0=0$, the process $(e^{4p\beta t}B_{2p}(\xi_t), e^{2p\beta t}B_{2p}^{1\over 2}(\xi_t)), t\in[0,\tau_2]$ is a local super-martingale;
 \begin{equation}
  E_{x,\xi_0,0}\left[\sup\limits_{0\leq t\leq\tau_2}e^{4p\beta t}\left(|\xi_t|^{4p}+|\eta_t|^{2p}\right)\right]+E_{x,\xi_0}\int_0^{\tau_2} e^{4p\beta t}|\xi_t|^{4p}dt\leq N(K_0,\lambda) B_{2p}(\xi_0).
 \end{equation}
\end{Corollary}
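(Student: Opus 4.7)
The plan is to follow the four preceding lemmas verbatim, replacing the exponents $4$ and $8$ on $|\xi_t|$ and the matching exponents on $\psi$ and $\varphi$ by $4p$, and verifying that the barrier functions $B_{2p-1}$ and $e^{4p\beta t}B_{2p}$ still give rise to nonnegative local super-martingales. I organize this as two independent blocks: the near-boundary block (Assertion (i), modeled on Lemmas~\ref{le:quasiestimate1} and \ref{le:quasiestimate3}) and the interior block (Assertion (ii), modeled on Lemmas~\ref{le:quasiestimate2} and \ref{le:quasiestimate4}).

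For the near-boundary block, apply It\^o's formula separately to $(\lambda+\sqrt{\psi}+\psi)|\xi_t|^{4p}$ and to $\varphi^{(8p-1)/2}\psi_{(\xi_t)}^{4p}\psi^{-(4p-1)}$. The first computation parallels the estimates of $I_1,\ldots,I_4$ in Lemma~\ref{le:quasiestimate1} with $4$ replaced by $4p$; Young's inequality converts the cross terms into $C_p(K_0)\left(|\xi_t|^{4p}\psi^{-3/2}+\varphi^{(8p-3)/2}\psi_{(\xi_t)}^{4p}\psi^{-4p}\right)$ plus a local martingale, where the first dominant term is killed by $I_3\le-|\xi_t|^{4p}/(8\psi^{3/2})$ coming from $\cL\psi\le-1$. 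The second computation is the point where the constant $4p$ in the prescription $\pi_t=4p[\psi_{(\sigma)}\psi_{(\xi_t)}/(\varphi\psi)]$ is forced upon us: the It\^o correction on $\psi_{(\xi_t)}^{4p}$ produces a cross term proportional to $4p\sum_i\psi_{(\sigma_i)}\pi_t^i\psi_{(\xi_t)}^{4p-1}/\psi^{4p-1}$ which must cancel against the production term, leaving behind a negative multiple of $\varphi^{(8p-3)/2}A\psi_{(\xi_t)}^{4p}\psi^{-4p}$ large enough to absorb the positive contribution of the first piece when $\lambda$ is small. This yields the super-martingale property of $B_{2p-1}(X_t,\xi_t)$ on $[0,\tau_1]$ and, since $\sqrt{\cdot}$ is concave, of $B_{2p-1}^{1/2}(X_t,\xi_t)$ too. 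The integral bound and supremum bound on $|\xi_t|^{4p}+\psi_{(\xi_t)}^{4p}/\psi^{4p}$ then follow as in Lemma~\ref{le:quasiestimate1}, using It\^o's formula separately on each summand and the BDG and Cauchy inequalities. The estimate on $|\eta_t|^{2p}$ is then obtained by applying It\^o to $e^{2\varphi}|\eta_t|^{2p}$ exactly as in Lemma~\ref{le:quasiestimate3}, bounding $\|G\|$ and $|H|$ as there and invoking the already-proved supremum and integral estimates on the $4p$-th order quantities of $\xi_t$.

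For the interior block, It\^o's formula gives
\begin{equation*}
d|\xi_t|^{4p}=2p|\xi_t|^{4p-2}\bigl[2\langle\xi_t,\bar b_t\rangle+(4p-1)\|\bar\sigma_t\|^2\bigr]\,dt+4p|\xi_t|^{4p-2}\langle\xi_t,\bar\sigma_t\,dW_t\rangle,
\end{equation*}
and assumption $(H10)_p$ is designed precisely so that the drift is bounded above by $(-4p\beta-1)|\xi_t|^{4p}$; multiplying by $e^{4p\beta t}$ makes $e^{4p\beta t}B_{2p}(\xi_t)$ a local super-martingale, and concavity of $\sqrt{\cdot}$ gives the same for $e^{2p\beta t}B_{2p}^{1/2}(\xi_t)$. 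The integral and supremum estimates for $\xi_t$ then follow line-by-line from Lemma~\ref{le:quasiestimate2} (BDG plus the elementary inequality $ab\le\tfrac12 a^2+\tfrac12 b^2$). Finally, the estimate on $\eta_t$ is obtained as in Lemma~\ref{le:quasiestimate4}: with the choice \eqref{eq:coefficientinB4} the drift of $e^{8p\beta t}|\eta_t|^{2p}$ is bounded by $[(4p\beta-1)|\eta_t|^{2p}+N|\xi_t|^{4p}]e^{8p\beta t}$, and the integrated estimate for $\xi_t$ closes the argument.

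The main obstacle is the bookkeeping near the boundary: one must confirm that the exponent $(8p-1)/2$ on $\varphi$ in $B_{2p-1}$ is exactly the one that makes It\^o's formula produce a negative $\varphi^{(8p-3)/2}A\psi_{(\xi_t)}^{4p}/\psi^{4p}$ term large enough to swallow the cross terms generated both within the second piece of the barrier and between the two pieces. The scaling $4p$ in $\pi_t$ and the fractional exponent $(8p-1)/2$ in the barrier are coupled: any other choice spoils the telescoping that makes the drift nonpositive. Once this algebraic balance is verified, every other step is a routine extension of the $p=1,2$ arguments already carried out in Lemmas~\ref{le:quasiestimate1}--\ref{le:quasiestimate4}.
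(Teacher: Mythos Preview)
Your proposal is correct and matches the paper's approach: the paper states this result as a Corollary with no proof, the tacit argument being precisely what you outline---repeat Lemmas~\ref{le:quasiestimate1}--\ref{le:quasiestimate4} verbatim with the exponent $4$ (resp.\ $8$) on $|\xi_t|$ replaced by $4p$, the coefficient in $\pi_t$ scaled to $4p$, and the exponent on $\varphi$ adjusted to $(8p-1)/2$. One minor bookkeeping slip: in the interior $\eta$-estimate the weight should be $e^{4p\beta t}$ (so that $p=2$ recovers Lemma~\ref{le:quasiestimate4}'s $e^{8\beta t}$ and so that the conclusion matches the Corollary's stated bound), not $e^{8p\beta t}$.
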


\section{Interior Gradient and Hessian Estimates}\label{sec:der}

In this section, we prove Theorem \ref{thm:estimate}. We begin with a standard BSDE estimate and estimate the derivative in two regions near the boundary and in the interior of the domain.

\begin{Remark}\label{re:sec4}
 Without loss of generality, we may assume that $u\in C^1(\overline{D})$ when investigating the gradient estimate of $u$, and $u\in C^2(\overline{D})$ when investigating the Hessian estimate of $u$.
In fact, to find the gradient estimate of the solution such as $\lim\limits_{\delta\to 0}|Y_0^\delta-Y_0|\delta^{-1}\leq N$, we can choose smooth coefficients $(\sigma^\epsilon, f^\epsilon, g^\epsilon)$ such that they converge to $(\sigma, f, g)$. Here we need to notice that $(b, \sigma)$ is smooth enough under assumptions. Assume $(\sigma^\epsilon)^2\geq (\sigma)^2+\epsilon I$, then the nondegenerate elliptic equation with $(b, \sigma^\epsilon, f^\epsilon, g^\epsilon)$ has a classical solution $u^\epsilon(x)=Y_0^\epsilon$. Moreover, due to the solution of a BSDE has continuous dependence on parameter (see \cite[Theorem 2.4]{BRI}), we have
 $$
 \lim\limits_{\delta\to 0}\lim\limits_{\epsilon\to 0} |Y_0^\delta-Y_0^{\epsilon,\delta}|\delta^{-1}=0 \quad \hbox{ \rm and} \quad \lim\limits_{\delta\to 0}\lim\limits_{\epsilon\to 0} |Y_0-Y_0^\epsilon|\delta^{-1}=0.
 $$
  Therefore, our problem is reduced to the estimate of
  $\lim\limits_{\delta\to 0}|Y_0^{\epsilon,\delta}-Y_0^\epsilon|\delta^{-1}$, and thus we can assume $u\in C^1$ to derive the gradient estimate. By the way, we can also assume that $g\in C^1$ and $f\in C^1$ with bounded partial derivatives in $(x,y,z)$ when estimating the gradient of $u$. Noting that $ \cL^\epsilon \psi\leq \cL\psi+\epsilon\psi_{xx}/2\leq -1/2$, we have $ E\tau^\epsilon(x)\leq N$.

  Similarly, we may assume $u, g\in C^2$ and $f\in C^2$ with bounded first and second order partial derivatives in $(x,y,z)$ when investigating Hessian estimates.
\end{Remark}
\subsection{Interior Gradient Estimate}\label{sec:first}
Define the first quasi-derivative $\xi_t$ by \eqref{eq:Qua1}, $X_t$ by \eqref{eq:SDE2} and $X_t^\delta$ by \eqref{eq:FBSDE1}. Let $\tau$ and $\tau^\delta$ be the first exit time from $D$ of $X_t$ and $X_t^\delta$, respectively; and  $\tau_1$ and $\tau_2$ be the first exit time of $X_t$ from $D_{\delta_1}^\lambda$ and  $D_{\lambda^2}$, respectively. Set
$$\gamma^{\delta,n}_T:=\tau\wedge\tau^\delta\wedge k_n\wedge T,$$
where $k_n:=\inf\{t\geq 0;|\xi_t|\geq n\}$ and $T\in[1,\infty)$. Set
$$\gamma_0=\tau^\delta\wedge T, \quad \gamma_{1,T}^{\delta,n}:=\tau_1\wedge\gamma^{\delta,n}_T,  \hbox{ \rm and } \quad \gamma_{2,T}^{\delta,n}=\tau_2\wedge\gamma^{\delta,n}_T.$$
For simplicity of notation, write
$$\gamma:=\gamma^{\delta,n}_T, \quad \gamma_1:=\gamma_{1,T}^{\delta,n}, \quad\hbox{ \rm and } \quad \gamma_2:=\gamma_{2,T}^{\delta,n}$$ whenever no confusion is made.
Also, write for $(x,\xi_0,t)\in D\times\mathbb{R}^d\times[0,\gamma]$,
$$E_{x,\xi_0}[X_t^\delta-X_t]:=E[X_t^{\delta}(x+\delta\xi_0)-X_t(x)]. $$

The main result of this section is stated as follows.

\begin{Theorem}\label{thm:firstorder}
Define $u$ by (\ref{eq:Sol1}). Let $X$ be the unique solution of SDE (\ref{eq:SDE1}), $(Y, Z)$ be the unique solution of BSDE (\ref{eq:BSDE1}) and $(X^\delta, Y^\delta, Z^\delta)$ be the unique solution of FBSDE (\ref{eq:FBSDE1}) with $(\eta_0,\tilde r_t,\tilde\pi_t,\tilde P_t)$ vanishing. Let assumptions $(H1)$-$(H5)$, $(H6)_0$, $(H7)$, $(H9)$ and $(H10)_1$ be satisfied. Then $u\in C^{0,1}_{loc}(D)\cap C(\overline{D})$,
 \begin{eqnarray}
  & &\ds \mathop{\varlimsup\limits_{T\to\infty}}_{n\to\infty}\varlimsup\limits_{\delta\to 0} E_{x,\xi_0}\left[\sup\limits_{t\leq{\gamma_{1,T}^{\delta,n}}\wedge{\gamma_{2,T}^{\delta,n}}}\left|\frac{Y_t^\delta-Y_t}{\delta}\right|^2\right]\nonumber\\
  &\leq&\ds N\left(|\xi_0|^2+\frac{|\psi_{(\xi_0)}(x)|^2}{\psi^{\frac{3}{2}}(x)}\right)(|g|^2_{0,1}+\|f(\cdot)\|_{0,1}^2),\  \forall (x,\xi_0)\in D\times\mathbb{R}^d. \label{eq:estin13}
 \end{eqnarray}
In particular, for any $\xi_0\in\mathbb{R}^d$ and $a.e.\ x\in D,$
 \begin{equation}
  |u_{(\xi_0)}(x)|\leq N\left(|\xi_0|+\frac{|\psi_{(\xi_0)}(x)|}{\psi^{\frac{3}{4}}(x)}\right)(|g|_{0,1}+\|f(\cdot)\|_{0,1})
 \end{equation}
 where $N=N(K_0, d, d_1, k, D, \mu, L, L_0)$.
\end{Theorem}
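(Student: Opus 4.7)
The plan is to obtain \eqref{eq:estin13} from a BSDE-stability argument for the difference process, where the quasi-derivative moment bounds from Lemmas \ref{le:quasiestimate1} and \ref{le:quasiestimate2} supply precisely the right weights to mesh with the choice of $\beta$ in $(H7)$. First I would subtract BSDE \eqref{eq:BSDE2} from the perturbed BSDE in \eqref{eq:FBSDE1} to form a BSDE on $[0,\gamma]$ for $(\Delta Y_t^\delta,\Delta Z_t^\delta):=(\delta^{-1}(Y_t^\delta-Y_t),\,\delta^{-1}(\tilde Z_t^\delta-Z_t))$. Its generator splits into (a) a Lipschitz-monotone part in $\Delta Y$ controlled by $(H5)$, (b) a Lipschitz part in $\Delta Z$ with constant $L_0$ from $(H4)$, (c) a source term $\delta^{-1}[f(X_t^\delta,Y_t,Z_t)-f(X_t,Y_t,Z_t)]$ bounded by $\|f(\cdot)\|_{0,1}\,|\delta^{-1}(X_t^\delta-X_t)|$, which converges in $L^p$ to $\|f(\cdot)\|_{0,1}|\xi_t|$ by \eqref{eq:estin11}, and (d) correction terms coming from the time-change $r_t$, the Girsanov drift $\pi_t$ and the rotation $P_t$, which involve $(Y_t,Z_t)$ multiplied by $(r_t,\pi_t,P_t)$. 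Applying It\^o to $e^{2\beta t}|\Delta Y_t|^2$ on $[0,\gamma]$ and using $-\mu+2L_0^2<2\beta<0$, the $|\Delta Z|^2$ term is absorbed, yielding schematically
\begin{equation*}
E|\Delta Y_0|^2 \leq E\bigl[e^{2\beta\gamma}|\Delta Y_\gamma|^2\bigr]+C\,E\!\int_0^\gamma e^{2\beta t}\Bigl(\|f(\cdot)\|_{0,1}^2|\xi_t|^2+|(r_t,\pi_t,P_t)|^2(|Y_t|^2+\|Z_t\|^2)\Bigr)dt.
\end{equation*}

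Next I would split the time interval at $\gamma_1$ and $\gamma_2$ and pick the coefficients of the quasi-derivative according to the region. On $[0,\gamma_1]$ I use the coefficients \eqref{eq:coefficientinB1} of Lemma \ref{le:quasiestimate1}, under which $B_1^{1/2}(X_t,\xi_t)$ is a local supermartingale; this bounds all second-order moments of $|\xi_t|$ and $\psi_{(\xi_t)}(X_t)/\psi(X_t)$ by $B_1^{1/2}(x,\xi_0)$, which is comparable to $|\xi_0|^2+|\psi_{(\xi_0)}(x)|^2/\psi^{3/2}(x)$ up to a constant. On $[0,\gamma_2]$ I use the coefficients \eqref{eq:coefficientinB2} of Lemma \ref{le:quasiestimate2}, so that $e^{2\beta t}B_2^{1/2}(\xi_t)$ is a local supermartingale controlling the corresponding moments by $B_2^{1/2}(\xi_0)\lesssim|\xi_0|^2$. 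The terminal term $|\Delta Y_\gamma|^2$ is handled by writing $\Delta Y_\gamma$ on $\{\gamma=\tau\wedge\tau^\delta\}$ as a bounded quantity ($\lesssim 2|g|_0/\delta$) outside a small set, or, on the full-measure event where $\xi_\tau$ is tangent to $\partial D$ at $X_\tau$, identifying $\lim_{\delta\to0}\delta^{-1}(g(X_\tau^\delta)-g(X_\tau))$ with a directional derivative of $g$ along $\xi_\tau$ using \eqref{eq:estin11}, which is bounded by $|g|_{0,1}|\xi_\tau|$. The moments of $(Y_t,Z_t)$ in $(|Y_t|^2+\|Z_t\|^2)|(r_t,\pi_t,P_t)|^2$ are handled by H\"older's inequality, pairing the $L^4$ bounds on $(r_t,\pi_t,P_t)$ from Lemmas \ref{le:quasiestimate1}(ii) and \ref{le:quasiestimate2}(iii) with the $L^4$ bounds on $(Y_t,Z_t)$ from \eqref{eq:estin10}.

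Once these ingredients are combined, passing to the limits $\delta\to 0$, $n\to\infty$, then $T\to\infty$ (using Fatou and the $L^p$-convergences of Lemma \ref{le:BSDE2}) yields \eqref{eq:estin13}. To translate this into the pointwise gradient estimate, I invoke Lemmas \ref{le:BSDE3}--\ref{le:BSDE4} which give $Y_0^\delta=u(x+\delta\xi_0)$ and $Y_0=u(x)$, so $\delta^{-1}(Y_0^\delta-Y_0)$ is a difference quotient of $u$ along $\xi_0$. Together with the continuity of $u\in C(\overline D)$ from Lemma \ref{le:BSDE3} and a Lebesgue-differentiation argument along a countable dense set of directions $\xi_0\in\mathbb{Q}^d$, the uniform bound \eqref{eq:estin13} shows $u$ is locally Lipschitz in $D$ with the claimed pointwise inequality for $|u_{(\xi_0)}(x)|$ at a.e.\ $x\in D$.

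The main obstacle I expect is the term involving $\pi_t$ in the correction part of the generator: $\pi_t$ blows up near $\partial D$ like $\psi_{(\xi_t)}/(\varphi\psi)$, and multiplies $Z_t$, so one must carefully match the $L^4$-in-time control of $\pi_t$ provided by $B_1$ with an $L^4$-in-time control of $Z_t$ obtained from Lemma \ref{le:BSDE}, and verify via H\"older that the resulting $L^2$-in-time product is integrable with the weight $e^{2\beta t}$. This is exactly why $B_1$ is designed to control fourth-order (rather than the simpler second-order) moments of $\xi_t$ and $\psi_{(\xi_t)}/\psi$, as emphasized in the introduction; the rest of the proof is a careful bookkeeping of these estimates on $[0,\gamma_1]$ and $[0,\gamma_2]$ respectively.
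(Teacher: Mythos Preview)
Your BSDE-stability skeleton (It\^o on $e^{2\beta t}|\Delta Y_t|^2$, absorption of $|\Delta Z|^2$ via $(H7)$, H\"older pairing of $L^4$-moments of $(r,\pi,P)$ against $L^4$-moments of $(Y,Z)$) matches the paper and is fine. The gap is in how you treat the terminal term, and it is a real one.

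You assert that the terminal contribution is handled ``on the full-measure event where $\xi_\tau$ is tangent to $\partial D$'' by replacing $\delta^{-1}(g(X_\tau^\delta)-g(X_\tau))$ with $g_{(\xi_\tau)}$. Two things go wrong. First, the quasi-derivatives constructed in Lemmas~\ref{le:quasiestimate1}--\ref{le:quasiestimate2} do \emph{not} force $\xi_\tau$ to be tangent to $\partial D$; that is a special feature available only in particular geometries (see the discussion after Lemma~\ref{le:Qua}), and the barrier approach here is precisely the substitute when tangentiality fails. Second, and more fundamentally, the stopping times actually used are $\gamma_1=\tau_1\wedge\gamma$ and $\gamma_2=\tau_2\wedge\gamma$, where $\tau_1,\tau_2$ are exit times from the \emph{subdomains} $D_{\delta_1}^\lambda$ and $D_{\lambda^2}$. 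At $\gamma_i$ the process sits on an interior level set $\{\psi=\delta_1\}$, $\{\psi=\lambda\}$, or $\{\psi=\lambda^2\}$, where the boundary datum is $u$, not $g$. So the terminal term is $E[e^{2\beta\gamma_i}|u_{(\xi_{\gamma_i})}(X_{\gamma_i})|^2]$, which involves the \emph{unknown} $u_x$.

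The paper closes this circularity with an iteration you have not included. Via the supermartingale property of $B_i^{1/2}$ one gets the recursive inequalities (Lemma~\ref{le:estin1_in_sec4} and the Proposition following it)
\[
\frac{|u_{(\xi_0)}(x)|^2}{B_1^{1/2}(x,\xi_0)}\le \sup_{\partial D_{\delta_1}^\lambda}\frac{|u_{(\zeta)}(y)|^2}{B_1^{1/2}(y,\zeta)}+N,\qquad
\frac{|u_{(\xi_0)}(x)|^2}{B_2^{1/2}(\xi_0)}\le \sup_{\partial D_{\lambda^2}}\frac{|u_{(\zeta)}(y)|^2}{B_2^{1/2}(\zeta)}+N.
\]
Then the key comparison Lemma~\ref{le:barriers} ($B_1\ge 4B_2$ on $\{\psi=\lambda\}$, $4B_1\le B_2$ on $\{\psi=\lambda^2\}$) lets one bounce between the two inequalities with a contraction factor, reducing everything to the supremum on $\{\psi=\delta_1\}$ (Lemma~\ref{le:estin2_in_sec4}). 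Finally one sends $\delta_1\to 0$: if the limiting direction has $\psi_{(\zeta)}(z)\neq 0$ the denominator $B_1^{1/2}$ blows up and the ratio vanishes; if $\psi_{(\zeta)}(z)=0$ the direction is tangential and $u_{(\zeta)}=g_{(\zeta)}$. Only at this last step does $g$ enter. Without this bootstrapping your estimate never decouples from $|u_x|_0$, so you cannot conclude. A minor related point: the paper legitimizes writing the terminal term as $u_{(\xi_{\gamma_i})}(X_{\gamma_i})$ by first assuming $u\in C^1$ via mollification (Remark~\ref{re:sec4}) and checking the final constant is independent of $|u_x|_0$; your Lebesgue-differentiation route at the end does not provide this.
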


The proof of Theorem \ref{thm:firstorder} consists of the following sequel of propositions. The following proposition is an analogue to Lemma \ref{le:BSDE}.
\begin{Proposition}\label{pro:estin9}
Under the assumptions of Theorem \ref{thm:firstorder} and the condition (\ref{ass:co1}), there exists a constant $N$ such that for sufficiently small $\delta$

\rm{(i)} $\ds E\left[\sup\limits_{0\leq t\leq\gamma_0}|Y_t^\delta|^2\right]+E\int_0^{\gamma_0}\left(|Y_t^\delta|^2+\|\tilde Z_t^\delta\|^2\right)ds\leq N(|g|_0^2+|f(\cdot,0,0)|^2_0)$;

\rm{(ii)} $\ds E\left[\sup\limits_{0\leq t\leq\gamma_0}|Y_t^\delta|^{2p}\right]+E\left[\left(\int_0^{\gamma_0}\|\tilde Z_t^\delta\|^2ds\right)^p\right]\leq N(|g|_0^{2p}+|f(\cdot,0,0)|_0^{2p}), \quad p\geq 2$.
\end{Proposition}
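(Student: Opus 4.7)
The plan is to recast the perturbed BSDE \eqref{eq:FBSDE1} as a standard random-terminal BSDE for $(Y^\delta,\tilde Z^\delta)$ with an effective generator $\tilde f_\delta$, and then apply Lemma \ref{le:BSDE} with constants uniform in small $\delta$. Since $P_t,\tilde P_t\in\mathfrak{B}$ are skew-symmetric, the matrix $O_t:=e^{\delta P_t}e^{\frac{1}{2}\delta^2\tilde P_t}$ is orthogonal. Setting $c_t:=1+2\delta r_t+\delta^2\tilde r_t$ and $\hat p_t:=\delta\pi_t+\frac{1}{2}\delta^2\tilde\pi_t$, the defining relation $\tilde Z_t^\delta=Z_t^\delta c_t^{1/2}O_t$ inverts to $Z_t^\delta=c_t^{-1/2}\tilde Z_t^\delta O_t^{*}$ whenever $c_t>0$, and substituting into the equation for $Y^\delta$ gives
\[
dY_t^\delta=-\tilde f_\delta(X_t^\delta,Y_t^\delta,\tilde Z_t^\delta)\,dt+\tilde Z_t^\delta\,dW_t,\qquad Y_{\tau^\delta}^\delta=g(X_{\tau^\delta}^\delta),
\]
with $\tilde f_\delta(x,y,\tilde z):=c_t f(x,y,c_t^{-1/2}\tilde z O_t^{*})+\tilde z \hat p_t$.

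Next I would verify that $\tilde f_\delta$ inherits the hypotheses $(H4)$, $(H5)$ and $(H7)$ with constants uniform in small $\delta$. The boundedness of $(r,\tilde r,\pi,\tilde\pi)$ granted by Lemma \ref{le:BSDE2}, together with continuity in $\delta$, lets me choose $\delta$ small enough that $c_t\in[\frac12,\frac32]$, $|\hat p_t|\le 1$ and $\|O_t\|=1$; in particular $c_t$ stays bounded away from zero, making the change of variable harmless. A direct computation then yields Lipschitz constants $\tilde L\le 2L$ in $y$, $\tilde L_0\le\sqrt{2}L_0+1$ in $\tilde z$, a monotonicity constant $\tilde\mu\ge\mu/2$ in $y$, and $|\tilde f_\delta(\cdot,0,0)|_0\le\frac{3}{2}|f(\cdot,0,0)|_0$. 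Since all these constants converge to those of $f$ as $\delta\to 0$, the structural inequality $-\tilde\mu+2\tilde L_0^2<2\beta<0$ of $(H7)$ can be preserved uniformly in small $\delta$, possibly after slightly shrinking $\beta$.

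Armed with this, I would apply Lemma \ref{le:BSDE} to the canonical BSDE for $(Y^\delta,\tilde Z^\delta)$ on $[0,\tau^\delta]$. Since $E\tau^\delta<\infty$ (the perturbed operator still satisfies $\cL^\delta\psi\le-\frac{1}{2}$ for small $\delta$, by Remark~\ref{re:sec4}-type reasoning), the lemma delivers
\[
E\left[\sup_{0\le t\le\tau^\delta}|Y_t^\delta|^{2p}\right]+E\int_0^{\tau^\delta}|Y_s^\delta|^{2p}\,ds+E\left[\left(\int_0^{\tau^\delta}\|\tilde Z_s^\delta\|^2\, ds\right)^p\right]\le N\bigl(|g|_0^{2p}+|f(\cdot,0,0)|_0^{2p}\bigr)
\]
for every $p\ge 1$, with $N$ independent of $\delta$ and of $T$. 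Since $\gamma_0=\tau^\delta\wedge T\le\tau^\delta$ and all integrands above are nonnegative, restriction to $[0,\gamma_0]$ immediately yields (i) (take $p=1$) and (ii) (take $p\ge 2$).

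The main technical step is the bookkeeping in paragraph two: verifying that the effective generator $\tilde f_\delta$ satisfies the hypotheses of Lemma \ref{le:BSDE}, and in particular the $(H7)$ structural relation among $\tilde\mu$, $\tilde L_0$ and $\beta$, with constants uniform in small $\delta$. Once this is in place, the a priori BSDE estimates of Lemma \ref{le:BSDE} apply directly and no fresh It\^o–Gronwall argument is required.
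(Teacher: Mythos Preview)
Your approach is correct and cleaner than the paper's, but it is genuinely different. The paper proceeds by a direct It\^o computation on $|Y_t^\delta|^2$: it expands $d|Y_t^\delta|^2$, uses $(H4)$--$(H5)$ and Cauchy's inequality to produce a differential inequality, then chooses $\delta$ small so that $|\delta r_t|<\frac12$, $|\delta\pi_t|$ is small, and $\|Z_t^\delta\|\le N\|\tilde Z_t^\delta\|$; the terminal value at $\gamma_0$ is controlled via Lemma~\ref{le:BSDE4} ($Y_{\gamma_0}^\delta=u(X_{\gamma_0}^\delta)$) together with the boundedness of $u$ from Lemma~\ref{le:BSDE}. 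By contrast, you recast the perturbed equation as a standard BSDE for $(Y^\delta,\tilde Z^\delta)$ with effective driver $\tilde f_\delta$ and invoke Lemma~\ref{le:BSDE} directly on $[0,\tau^\delta]$. Your route avoids redoing the It\^o--Gronwall argument and uses the terminal condition $g(X_{\tau^\delta}^\delta)$ rather than the intermediate value $u(X_{\gamma_0}^\delta)$; the price is the bookkeeping that $\tilde f_\delta$ still satisfies $(H4)$, $(H5)$, $(H7)$ uniformly in small $\delta$. Two minor remarks: first, since the perturbed constants $(\tilde\mu,\tilde L_0)$ converge to $(\mu,L_0)$ as $\delta\to 0$ and the inequality in $(H7)$ is strict, the \emph{same} $\beta$ works for small $\delta$---no shrinking is needed. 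Second, Lemma~\ref{le:BSDE} as stated in the paper has a deterministic driver $f(x,y,z)$, whereas your $\tilde f_\delta$ is random through $c_t,O_t,\hat p_t$; this is harmless because the underlying Darling--Pardoux estimates \cite[Proposition~4.3, Corollary~4.4.1]{DAR} apply to progressively measurable drivers satisfying the Lipschitz and monotonicity conditions pathwise, but you should flag this extension explicitly.
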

\begin{proof}
Using It\^{o}'s formula, we have
\begin{eqnarray*}
  d|Y_t^\delta|^2 &\geq&\displaystyle \left\{\left(1+2\delta r_t\right)\left[2\mu-2\left(1+\epsilon\right)L_0^2-\epsilon'\right]-\left(1+\epsilon''\right)|\delta\pi_t|^2 \right\}|Y_t^\delta|^2 dt-\frac{1}{2(1+\epsilon)}\|Z_t^\delta\|^2 dt\\
  &&\displaystyle +\left[1-\frac{1}{1+\epsilon''}\right]\|\tilde Z_t^\delta\|^2 dt-(1+2\delta r_t)\frac{1}{\epsilon'}|f(X_t^\delta,0,0)|^2dt+2Y_t^\delta \tilde Z_t^\delta dW_t.
\end{eqnarray*}
For $x\in D$, if we choose $\delta$ is small enough, we have
$$|\delta r_t|< \frac{1}{2}, \quad |\delta\pi_t|<\epsilon', \quad \hbox{ \rm  and} \quad \|Z_t^\delta\|\leq N\|\tilde Z_t^\delta\|,$$
and from Lemma \ref{le:BSDE4} and the BDG inequality, we have
\begin{eqnarray*}
  &&\ds E\left[\sup\limits_{0\le t\leq\gamma_0}|Y_t^\delta|^2\right]+E\int_0^{\gamma_0}|Y_t^\delta|^2ds+E\int_0^{\gamma_0}\|\tilde Z_t^\delta\|^2ds\\
  &\leq&\ds N\left(E\left[|u(X_{\gamma_0}^\delta)|^2\right]+E\int_0^{\gamma_0}|f(X_s^\delta,0,0)|^2ds\right),
\end{eqnarray*}
which yields Assertion \rm{(i)}. Assertion \rm{(ii)} can be proved in the same way.
\end{proof}

The following lemma is about estimating the directional derivatives of the solutions along first quasi-derivatives on the boundary. When $u$ taking values in $\mathbb{R}^k$, we can still use the methods provided in \cite{WEI2} where concerning the case of dimension one. We refer the reader to arguments (4.3)-(4.14) in \cite{WEI2} for details, but we still prove it for the sake of completeness.

\begin{Lemma}\label{le:estin1_in_sec4}
Let the assumptions of Theorem \ref{thm:firstorder} be satisfied. Assume $u\in C^1(\overline{D})$. Then, there exists a constant $N$ such that

\rm{(i)} for $(x,\xi_0)\in D_{\delta_1}^\lambda\times\mathbb{R}^d$,
\begin{equation}\label{eq:estin1_in_lemma1}
\mathop{\varlimsup\limits_{T\to\infty}}_{n\to\infty}\varlimsup\limits_{\delta\to 0} E_{x,\xi_0}\left[|u_{(\xi_{\gamma_{1,T}^{\delta,n}})}(X_{\gamma_{1,T}^{\delta,n}})|^2e^{2\beta\gamma_{1,T}^{\delta,n}}\right]
\leq NB_1^{1\over 2}(x,\xi_0)+ \mathop{\sup\limits_{y\in\partial D_{\delta_1}^\lambda}}_{0\neq\zeta\in\mathbb{R}^d}\frac{|u_{(\zeta)}(y)|^2}{B_1^\sq(y,\zeta)}B_1^{1\over 2}(x,\xi_0);
\end{equation}

\rm{(ii)} for $(x,\xi_0)\in D_{\lambda^2}\times\mathbb{R}^d$,
\begin{equation}
\mathop{\varlimsup\limits_{T\to\infty}}_{n\to\infty}\varlimsup\limits_{\delta\to 0} E_{x,\xi_0}\left[|u_{(\xi_{\gamma_{2,T}^{\delta,n}})}(X_{\gamma_{2,T}^{\delta,n}})|^2e^{2\beta\gamma_{2,T}^{\delta,n}}\right]
\leq NB_2^{1\over 2}(\xi_0)+ \mathop{\sup\limits_{y\in\partial D_{\lambda^2}}}_{0\neq\zeta\in\mathbb{R}^d}\frac{|u_{(\zeta)}(y)|^2}{B_2^\sq(\zeta)}B_2^{1\over 2}(\xi_0).
\end{equation}
\end{Lemma}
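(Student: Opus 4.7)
The plan is to exploit the local super-martingale properties of $B_1^{1/2}(X_t,\xi_t)$ and $e^{2\beta t}B_2^{1/2}(\xi_t)$ from Lemmas~\ref{le:quasiestimate1}(i) and \ref{le:quasiestimate2}(i), combined with a Cauchy–Schwarz-type splitting that isolates the boundary supremum. At the outset I would invoke Remark~\ref{re:sec4} to assume $u\in C^1(\overline{D})$, so that $u_{(\xi)}(x)=\langle u_x(x),\xi\rangle$ is a classical pointwise object with the crude bound $|u_{(\xi)}(x)|\le|u|_1|\xi|$ available in reserve.

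For assertion (i), I decompose the expectation according to $\{\gamma_1=\tau_1\}$ versus $\{\gamma_1<\tau_1\}$. On the first event, $X_{\gamma_1}\in\partial D_{\delta_1}^\lambda$, and one directly bounds
\[
|u_{(\xi_{\gamma_1})}(X_{\gamma_1})|^2\le\Bigl[\mathop{\sup\limits_{y\in\partial D_{\delta_1}^\lambda}}_{0\neq\zeta\in\mathbb{R}^d}\frac{|u_{(\zeta)}(y)|^2}{B_1^{1/2}(y,\zeta)}\Bigr]\cdot B_1^{1/2}(X_{\gamma_1},\xi_{\gamma_1}).
\]
Since $\beta<0$, the nonnegative super-martingale $B_1^{1/2}(X_t,\xi_t)$ remains a super-martingale when multiplied by the nonincreasing factor $e^{2\beta t}$, and optional stopping at $\gamma_1$ (whose built-in localization by $k_n\wedge T$ is precisely what makes this rigorous) yields $E_{x,\xi_0}[e^{2\beta\gamma_1}B_1^{1/2}(X_{\gamma_1},\xi_{\gamma_1})]\le B_1^{1/2}(x,\xi_0)$. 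This produces the boundary-supremum term of \eqref{eq:estin1_in_lemma1}.

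The residual on $\{\gamma_1<\tau_1\}$ I estimate by $|u|_1^2\, E[|\xi_{\gamma_1}|^2\mathbf{1}_{\gamma_1<\tau_1}]$, which Cauchy–Schwarz combined with the fourth-moment bound $E[\sup_{t\le\tau_1}|\xi_t|^4]\le N B_1(x,\xi_0)$ of Lemma~\ref{le:quasiestimate1}(iii) reduces to $N'|u|_1^2 B_1^{1/2}(x,\xi_0)P(\gamma_1<\tau_1)^{1/2}$; this supplies the $NB_1^{1/2}(x,\xi_0)$ summand in the statement (with $N$ absorbing the $|u|_1$ dependence from the regularization) and must be shown to be controlled in the iterated limit. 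Assertion (ii) is handled by the same blueprint with $B_1\leftrightarrow B_2$, $\tau_1\leftrightarrow\tau_2$, and the super-martingale $e^{2\beta t}B_2^{1/2}(\xi_t)$ from Lemma~\ref{le:quasiestimate2}(i).

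The main obstacle is verifying $P(\gamma_1<\tau_1)\to 0$ along the iterated limit $\varlimsup_{T,n\to\infty}\varlimsup_{\delta\to 0}$. The easy components are $P(k_n<\tau_1)\to 0$ (from $E[\sup_{t\le\tau_1}|\xi_t|^4]<\infty$) and $P(T<\tau_1)\to 0$ (since $\tau_1<\infty$ a.s.). The delicate piece is $P(\tau^\delta<\tau_1)\to 0$ as $\delta\to 0$: because $D_{\delta_1}^\lambda\Subset D$ one has $\tau_1<\tau$, and one needs to combine this with the uniform convergence $X^\delta\to X$ on $[0,\tau^\delta\wedge\tau\wedge T]$ from Lemma~\ref{le:BSDE2} and the non-degeneracy $(H9)$ (which prevents tangential boundary contact by $X$) to propagate the separation. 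Once this is in place, the residual bound vanishes and the two assertions follow.
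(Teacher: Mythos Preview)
Your approach is valid but differs from the paper's. You split on the event $\{\gamma_1=\tau_1\}$ versus $\{\gamma_1<\tau_1\}$ and drive the residual to zero via $P(\gamma_1<\tau_1)\to 0$. The paper instead writes
\[
E\bigl[|u_{(\xi_{\gamma_1})}(X_{\gamma_1})|^2 e^{2\beta\gamma_1}\bigr]
\le E\!\left[\frac{|u_{(\xi_{\gamma_1})}(X_{\gamma_1})|^2}{B_1^{1/2}(X_{\gamma_1},\xi_{\gamma_1})}\,B_1^{1/2}(X_{\gamma_1},\xi_{\gamma_1})\right]
\]
and splits the ratio by comparing its value at $X_{\gamma_1}$ with its value at $X_{\tau_1}$. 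Since the ratio is degree~$0$ in $\xi$, it is a continuous function on $\overline{D_{\delta_1}^\lambda}\times S_1$; a Weierstrass polynomial approximation (error $\le 1$) turns the difference into a Lipschitz bound $N|X_{\gamma_1}-X_{\tau_1}|$, and then $E|X_{\gamma_1}-X_{\tau_1}|^2\to 0$ (via $E(\tau_1-\tau_1\wedge\tau^\delta)\to 0$, quoted from \cite{WEI2}) handles the rest. The paper's residual does \emph{not} vanish: the approximation error leaves precisely the $3B_1^{1/2}(x,\xi_0)$ that becomes the $NB_1^{1/2}$ summand in the statement. By contrast, your residual genuinely goes to zero, so your argument actually yields the sharper bound with $N=0$; your remark that the residual ``supplies the $NB_1^{1/2}$ summand'' is therefore mis-phrased.

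Two small corrections. First, you do not need $(H9)$ for $P(\tau^\delta<\tau_1)\to 0$: since $D_{\delta_1}^\lambda\Subset D$ with positive distance $d>0$ to $\partial D$, on $\{\tau^\delta<\tau_1\le T\}$ one has $|X^\delta_{\tau^\delta}-X_{\tau^\delta}|\ge d$, and the uniform convergence from Lemma~\ref{le:BSDE2} alone forces this probability to zero; $(H9)$ plays no role here. Second, be careful with the phrase ``$N$ absorbing the $|u|_1$ dependence'': in this lemma the constant must ultimately be independent of $|u_x|_0$ for the downstream bootstrap (Lemma~\ref{le:estin2_in_sec4}) to work. Your route is safe only because the residual vanishes; had it merely been bounded by $N(|u|_1)B_1^{1/2}$, the argument would break later.
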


\begin{proof}
 First, for $(x,\xi_0)\in D_{\delta_1}^\lambda\times\mathbb{R}^d$,
\begin{eqnarray*}
 &&\ds E_{x,\xi_0}\left[\left|u_{(\xi_{\gamma_1})}(X_{\gamma_1})\right|^2e^{2\beta\gamma_1}\right]\\
 &\leq&\ds E_{x,\xi_0}\left[\frac{|u_{(\xi_{\gamma_1})}(X_{\gamma_1})|^2}{B_1^\sq(X_{\gamma_1},\xi_{\gamma_1})}
 B_1^\sq(X_{\gamma_1},\xi_{\gamma_1})\right]\\
 &\leq&\ds E_{x,\xi_0}\left[\left(\frac{|u_{(\xi_{\gamma_1})}(X_{\gamma_1})|^2}{B_1^\sq(X_{\gamma_1},\xi_{\gamma_1})}
 -\frac{|u_{(\xi_{\gamma_1})}(X_{\tau_1})|^2}{B_1^\sq(X_{\tau_1},\xi_{\gamma_1})}\right)B_1^\sq(X_{\gamma_1},\xi_{\gamma_1})\right]\\
&& +E_{x,\xi_0}\left[\frac{|u_{(\xi_{\gamma_1})}(X_{\tau_1})|^2}{B_1^\sq(X_{\tau_1},\xi_{\gamma_1})}B_1^\sq(X_{\gamma_1},\xi_{\gamma_1})\right]\\
 &=&\displaystyle J_1(\delta,n,T)+J_2(\delta,n,T).
\end{eqnarray*}

Note that the function $(\tilde x, y)\to |u_{(y)}(\tilde x)|^2B_1^{-\sq}(\tilde x,y)$ is continuous from $D_{\delta_1}^\lambda\times S_1$ to $\mathbb{R}$ with $S_1$ being the unitary ball of $\mathbb{R}^d$. Weierstrass approximation theorem asserts that there is a polynomial $W(\tilde x,y),  (\tilde x,y)\in D_{\delta_1}^\lambda\times S_1$ such that
$$\max\limits_{\tilde x\in D_{\delta_1}^\lambda, y\in S_1}\left|\frac{|u_{(y)}(\tilde x)|^2}{B_1^\sq(\tilde x,y)}-W(\tilde x,y)\right|\leq 1.$$

From Assertion \rm{(i)} of Lemma \ref{le:quasiestimate1}, it follows that
 \begin{eqnarray*}
  J_1(\delta,n,T)&\leq&\ds E_{x,\xi_0}\left[|W(X_{\gamma_1},\xi_{\gamma_1})-W(X_{\tau_1},\xi_{\gamma_1})|B_1^\sq(X_{\gamma_1},\xi_{\gamma_1})\right]
  +2E_{x,\xi_0}B_1^\sq(X_{\gamma_1},\xi_{\gamma_1})\\
&\leq&\ds NE_{x,\xi_0}\left[|X_{\gamma_1}-X_{\tau_1}|B_1^\sq(X_{\gamma_1},\xi_{\gamma_1})\right]+2B_1^{1\over 2}(x,\xi_0)\\
&\leq&\ds \frac{N^2}{4}E_{x,\xi_0}\left[|X_{\gamma_1}-X_{\tau_1}|^2\right]B_1^{1\over 2}(x,\xi_0)+\frac{E_{x,\xi_0}B_1(X_{\gamma_1},\xi_{\gamma_1})}{B_1^{1\over 2}(x,\xi_0)}
+2B_1^{1\over 2}(x,\xi_0)\\
&\leq&\ds \frac{N^2}{4}E_{x,\xi_0}\left[|X_{\gamma_1}-X_{\tau_1}|^2\right]B_1^{1\over 2}(x,\xi_0)+3B_1^{1\over 2}(x,\xi_0)\\
&\leq&\displaystyle \frac{N^2}{4}B_1^{1\over 2}(x,\xi_0)\left(E_{x,\xi_0}|\tau_1-\tau_1\wedge\tau^\delta|+E_{x,\xi_0}|\tau_1-\tau_1\wedge T| +E_{x,\xi_0}|\tau_1-\tau_1\wedge k_n|\right)\\
    &&\displaystyle+3B_1^{1\over 2}(x,\xi_0).
 \end{eqnarray*}

Due to \cite[Theorem3.3]{WEI2}, we have
\begin{equation*}
 \varlimsup\limits_{\delta\to 0}E_{x,\xi_0}(\tau_1-\tau_1\wedge\tau^\delta)=0.
\end{equation*}
Thus,
\begin{equation*}
 \mathop{\varlimsup\limits_{T\to\infty}}_{n\to\infty}\varlimsup\limits_{\delta\to 0} J_1(\delta,n,T)\leq 3B_1^{1\over 2}(x,\xi_0).
\end{equation*}
From Assertion \rm{(i)} in Lemma \ref{le:quasiestimate1}, we have
\begin{equation*}
 J_2(\delta,n,T)\leq \mathop{\sup\limits_{y\in\partial D_{\delta_1}^\lambda}}_{0\neq\zeta\in\mathbb{R}^d}\frac{|u_{(\zeta)}(y)|^2}{B_1^\sq(y,\zeta)}B_1^{1\over 2}(x,\xi_0)\ .
\end{equation*}
So, we have Assertion \rm{(i)}.

Second, Assertion \rm{(ii)} can be proved for $(x,\xi_0)\in D_{\lambda^2}\times\mathbb{R}^d$ in the same way.
\end{proof}

Combined with Lemmas \ref{le:BSDE4} and \ref{le:estin1_in_sec4}, we have the following immediate consequence.

\begin{Proposition}
Let the assumptions of Theorem \ref{thm:firstorder} be satisfied. Assume $u\in C^1(\overline{D})$. Then there exists a constant $N$ such that

\rm{(i)} for $(x,\xi_0)\in D_{\delta_1}^\lambda\times\mathbb{R}^d$,
  \begin{equation}\label{eq:estin1_in_pro1}
  \mathop{\varlimsup\limits_{T\to\infty}}_{n\to\infty}\varlimsup\limits_{\delta\to 0} \frac{1}{\delta^2} E_{x,\xi_0}\left[|Y_{\gamma_{1,T}^{\delta,n}}^\delta-Y_{\gamma_{1,T}^{\delta,n}}|^2e^{2\beta\gamma_{1,T}^{\delta,n}}\right]
 \leq NB_1^{1\over 2}(x,\xi_0)+ \mathop{\sup\limits_{y\in\partial D_{\delta_1}^\lambda}}_{0\neq\zeta\in\mathbb{R}^d}\frac{|u_{(\zeta)}(y)|^2}{B_1^\sq(y,\zeta)}B_1^{1\over 2}(x,\xi_0)\ ;
 \end{equation}

\rm{(ii)} for $(x,\xi_0)\in D_{\lambda^2}\times\mathbb{R}^d$,
\begin{equation}\label{eq:estin2_in_pro1}
 \mathop{\varlimsup\limits_{T\to\infty}}_{n\to\infty}\varlimsup\limits_{\delta\to 0} \frac{1}{\delta^2} E_{x,\xi_0}\left[|Y_{\gamma_{2,T}^{\delta,n}}^\delta-Y_{\gamma_{2,T}^{\delta,n}}|^2e^{2\beta\gamma_{2,T}^{\delta,n}}\right]
 \leq NB_2^{1\over 2}(\xi_0)+ \mathop{\sup\limits_{y\in\partial D_{\lambda^2}}}_{0\neq\zeta\in\mathbb{R}^d}\frac{|u_{(\zeta)}(y)|^2}{B_2^\sq(\zeta)}B_2^{1\over 2}(\xi_0).
\end{equation}
\end{Proposition}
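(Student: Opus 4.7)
The plan is to combine the identifications $Y_t = u(X_t)$ from Lemma \ref{le:BSDE3} and $Y_t^\delta = u(X_t^\delta)$ from Lemma \ref{le:BSDE4} with a first-order Taylor expansion of $u$, and then invoke Lemma \ref{le:estin1_in_sec4} to bound the resulting directional derivative term. Thanks to Remark \ref{re:sec4}, we may reduce to the setting where $u\in C^1(\overline{D})$ via the standard regularization, so that the Taylor expansion is legitimate and $|u_x|_0 < \infty$.

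First I would rewrite the left-hand sides of (\ref{eq:estin1_in_pro1}) and (\ref{eq:estin2_in_pro1}) as
$$\frac{1}{\delta^2}\, E_{x,\xi_0}\bigl[|u(X_\gamma^\delta) - u(X_\gamma)|^2\, e^{2\beta\gamma}\bigr]$$
for the two choices $\gamma = \gamma_{1,T}^{\delta,n}$ and $\gamma = \gamma_{2,T}^{\delta,n}$, then apply the fundamental theorem of calculus to split the difference quotient as
$$\frac{u(X_\gamma^\delta) - u(X_\gamma)}{\delta} = u_x(X_\gamma)\cdot \frac{X_\gamma^\delta - X_\gamma}{\delta} + R_\delta,$$
where
$$R_\delta := \int_0^1 \bigl[u_x(X_\gamma + s(X_\gamma^\delta - X_\gamma)) - u_x(X_\gamma)\bigr]\, ds \cdot \frac{X_\gamma^\delta - X_\gamma}{\delta}$$
is a remainder controlled by the modulus of continuity of $u_x$.

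Next I would pass to the limit $\delta \to 0$. By the convergence (\ref{eq:estin11}) of Lemma \ref{le:BSDE2}, $(X_\gamma^\delta - X_\gamma)/\delta \to \xi_\gamma$ in $L^2$; uniform continuity of $u_x$ on the compact set $\overline{D}$ combined with the same convergence forces $R_\delta \to 0$ in $L^2$. Since $|u_x|_0 < \infty$ and $e^{2\beta\gamma} \le 1$ because $\beta < 0$, dominated convergence yields
$$\varlimsup_{\delta\to 0} \frac{1}{\delta^2}\, E_{x,\xi_0}\bigl[|Y_\gamma^\delta - Y_\gamma|^2\, e^{2\beta\gamma}\bigr] \le \varlimsup_{\delta\to 0} E_{x,\xi_0}\bigl[|u_{(\xi_\gamma)}(X_\gamma)|^2\, e^{2\beta\gamma}\bigr].$$
Taking then the successive limits $n\to\infty$ and $T\to\infty$ on both sides and invoking Lemma \ref{le:estin1_in_sec4} in the two regions $D_{\delta_1}^\lambda$ and $D_{\lambda^2}$ produces (\ref{eq:estin1_in_pro1}) and (\ref{eq:estin2_in_pro1}) respectively.

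The main obstacle I anticipate is the control of $R_\delta$ at the random stopping time $\gamma$, which itself depends on $\delta$ (through $\tau^\delta$) and on $n$ (through $k_n$). The redeeming feature is that on $[0,\gamma]$ the process $|\xi_t|$ stays bounded by $n$ while $t \le T$, so the uniform $L^p$-convergence in (\ref{eq:estin11}) over $[0,\tau^\delta \wedge \tau \wedge T]$ suffices to push $R_\delta$ to zero in $L^2$ uniformly over the random endpoint; in particular no finer analysis of the joint convergence of $(X_\gamma^\delta,\xi_\gamma,\gamma)$ is needed because we only require an upper bound on $\varlimsup$. Once this uniform vanishing is secured, the rest is a direct transcription of the bounds in Lemma \ref{le:estin1_in_sec4}.
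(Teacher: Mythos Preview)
Your proposal is correct and follows essentially the same route as the paper: identify $Y^\delta_\gamma - Y_\gamma = u(X^\delta_\gamma) - u(X_\gamma)$ via Lemmas \ref{le:BSDE3} and \ref{le:BSDE4}, use the mean value form of the Taylor expansion together with (\ref{eq:estin11}) and the uniform continuity of $u_x$ on $\overline{D}$ to show the remainder vanishes in $L^2$, and then invoke Lemma \ref{le:estin1_in_sec4}. The only cosmetic difference is that the paper writes the remainder as $\widehat{u}_x\bigl(\tfrac{X^\delta_\gamma-X_\gamma}{\delta}-\xi_\gamma\bigr)+(\widehat{u}_x-u_x(X_\gamma))\xi_\gamma$ with $\widehat{u}_x:=\int_0^1 u_x(X_\gamma+\lambda(X^\delta_\gamma-X_\gamma))\,d\lambda$, which makes the dominated convergence step slightly cleaner because the second piece is multiplied by $\xi_\gamma$ (bounded by $n$ on $\{t\le k_n\}$) rather than by the difference quotient.
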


\begin{proof}
First, for $(x, \xi_0)\in D_{\delta_1}^\lambda\times\mathbb{R}^d$, we have
 \begin{eqnarray*}
    &&\ds E_{x,\xi_0}\left[|Y_{\gamma_1}^\delta-Y_{\gamma_1}|^2e^{2\beta\gamma_1}\right]\\
  &=&\ds E_{x,\xi_0}\left[|u(X_{\gamma_1}^\delta)-u(X_{\gamma_1})|^2e^{2\beta\gamma_1}\right]\\
  &\leq&\ds E_{x,\xi_0} \left[\delta^2\left|\frac{u(X_{\gamma_1}^\delta)-u(X_{\gamma_1})}{\delta}-u_{(\xi_{\gamma_1})}
  (X_{\gamma_1})\right|^2e^{2\beta\gamma_1}\right]+E_{x,\xi_0}\left[ \delta^2\left|u_{(\xi_{\gamma_1})}(X_{\gamma_1})\right|^2e^{2\beta\gamma_1}\right].
 \end{eqnarray*}
And, due to Mean Value Theorem, we get
\begin{small}\begin{eqnarray*}
 &&\ds E_{x,\xi_0}\left[\left|\frac{u(X_{\gamma_1}^\delta)-u(X_{\gamma_1})}{\delta}-u_{(\xi_{\gamma_1})}(X_{\gamma_1})\right|^2e^{2\beta\gamma_1}\right]\\
&\leq&\ds E_{x,\xi_0}\left[\left|{\widehat u}_x\left(\frac{X_{\gamma_1}^\delta-X_{\gamma_1}}{\delta}-\xi_{\gamma_1}\right)\right|^2e^{2\beta\gamma_1}\right]
+E_{x,\xi_0}\left[\left|{\widehat u}_x-u_x(X_{\gamma_1})\right|^2|\xi_{\gamma_1}|^2e^{2\beta\gamma_1}\right],
\end{eqnarray*}\end{small}
 where
 $${\widehat u}_x:=\int_0^1 u_x(X_{\gamma_1}+\lambda(X_{\gamma_1}^\delta-X_{\gamma_1}))d\lambda.$$
 As $u_x$ is continuous and $\ds\lim\limits_{\delta\to 0}E_{x,\xi_0}[|X_{\gamma_1}^\delta-X_{\gamma_1}|^2]=0$, by the dominated convergence theorem and \eqref{eq:estin11} with $p=2$, we have
\begin{equation*}
 \lim\limits_{\delta\to 0}E_{x,\xi_0}\left[\left|\frac{u(X_{\gamma_1}^\delta)-u(X_{\gamma_1})}{\delta}-u_{(\xi_{\gamma_1})}(X_{\gamma_1})\right|^2e^{2\beta\gamma_1}\right]=0.
\end{equation*}
Then, using formula (\ref{eq:estin1_in_lemma1}), we prove Assertion \rm{(i)}.

Second, repeating the arguments for $(x,\xi_0)\in D_{\lambda^2}\times\mathbb{R}^d$, Assertion \rm{(ii)} is proved.
\end{proof}

Note that our definition of $B_1$ and $B_2$ preceding Lemmas \ref{le:quasiestimate1} is different from those of \cite{WEI1}.  However, we can easily check out the same relations as \cite{WEI1} between both barrier functions $B_1$ and $B_2$.

\begin{Lemma}\label{le:barriers}
For sufficiently small $\lambda$, we have
 \begin{equation}\label{eq:estin24}
 \begin{cases}
   B_1(x,y)\geq 4B_2(y),\quad \forall (x,y)\in\{x: \psi(x)=\lambda\}\times\mathbb{R}^d;\\[8pt]
   4B_1(x,y)\leq B_2(y), \quad \forall (x,y)\in\{x: \psi(x)=\lambda^2\}\times\mathbb{R}^d.
  \end{cases}
 \end{equation}
\end{Lemma}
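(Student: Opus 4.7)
The plan is to verify the two inequalities by direct computation, exploiting that on the two level sets $\{\psi=\lambda\}$ and $\{\psi=\lambda^2\}$ the function $\psi$ (and hence $\varphi$) takes known explicit values in terms of $\lambda$, and that everything else can be controlled by the constant $K_0$ from $(H3)$. Both bounds will come down to elementary comparisons between powers of $\lambda$, and the freedom to shrink $\lambda$ will absorb every multiplicative constant.

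For the first inequality, I will discard the (nonnegative) second summand in $B_1$ and keep only
\[
B_1(x,y)\ \ge\ \bigl(\lambda+\sqrt{\psi(x)}+\psi(x)\bigr)|y|^4.
\]
On $\{\psi(x)=\lambda\}$ this gives $B_1(x,y)\ge(\sqrt{\lambda}+2\lambda)|y|^4$, while $4B_2(y)=4\lambda^{3/4}|y|^4$. Dividing by $\sqrt{\lambda}|y|^4$ reduces the claim to $1+2\sqrt{\lambda}\ge 4\lambda^{1/4}$, which holds as soon as $\lambda\le 4^{-4}$. So this side is essentially trivial.

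For the second inequality I need an upper bound on $B_1$. On $\{\psi(x)=\lambda^2\}$ the first summand is at most $(2\lambda+\lambda^2)|y|^4$. For the second summand, I first note that $\varphi(x)=2\lambda^2-\lambda^3/4\le 2\lambda^2$ on this level set, and that the estimate $|\psi_{(y)}(x)|\le|\psi_x(x)|\,|y|\le K_0|y|$ is immediate from $(H3)$. Hence
\[
K_1\varphi^{7/2}(x)\frac{\psi_{(y)}^4(x)}{\psi^3(x)}\ \le\ K_1(2\lambda^2)^{7/2}\frac{K_0^4|y|^4}{\lambda^6}\ =\ 2^{7/2}K_1K_0^4\,\lambda\,|y|^4.
\]
Adding the two contributions yields $B_1(x,y)\le C\lambda|y|^4$ for a constant $C=C(K_0,K_1)$, so $4B_1(x,y)\le 4C\lambda^{1/4}\,B_2(y)$, and taking $\lambda$ small enough that $4C\lambda^{1/4}\le 1$ finishes the proof.

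I do not foresee a genuine obstacle: the only place requiring any care is ensuring that the threshold on $\lambda$ chosen here is compatible with the other smallness requirements on $\lambda$ imposed earlier (in Lemmas \ref{le:quasiestimate1} and \ref{le:quasiestimate2}), which is a matter of taking the minimum of finitely many such thresholds. The bound $|\psi_x|\le K_0$ from $(H3)$ is what makes the $\varphi^{7/2}\psi^{-3}$ term tame in spite of the singular denominator, and this is the one nontrivial input.
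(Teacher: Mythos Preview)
Your proof is correct and is precisely the direct verification the paper has in mind: the paper itself does not write out a proof of this lemma, remarking only that one ``can easily check out the same relations'' as in \cite{WEI1}. Your computation of the leading powers of $\lambda$ on each level set, together with the bound $|\psi_{(y)}(x)|\le K_0|y|$ from $(H3)$ to control the $\varphi^{7/2}\psi_{(y)}^4\psi^{-3}$ term on $\{\psi=\lambda^2\}$, is exactly the intended elementary argument.
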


The following lemma helps to estimate the unknown terms on the right hand sides of formulas (\ref{eq:estin1_in_pro1}) and (\ref{eq:estin2_in_pro1}).


\begin{Lemma}\label{le:estin2_in_sec4}
 Assume that $g\in C^1(\overline{D})$, $f\in C^1(\overline{D}\times\mathbb{R}^k\times\mathbb{R}^{k\times d_1})$ has bounded partial derivatives in $x$, and $u\in C^1(\overline{D})$. Moreover, assume that
 \begin{equation}\label{eq:estin1_in_lemma2}
 \frac{|u_{(\xi_0)}(x)|^2}{B_1^{1\over 2}(x,\xi_0)}\leq \mathop{\sup\limits_{y\in\partial D_{\delta_1}^\lambda}}_{0\neq\zeta\in\mathbb{R}^d}\frac{|u_{(\zeta)}(y)|^2}{B_1^\sq(y,\zeta)}+N(|g|^2_0+\|f(\cdot)\|_{0,1}^2), \quad \forall (x,\xi_0)\in D_{\delta_1}^\lambda\times\mathbb{R}^d\setminus\{0\},
 \end{equation}
 and
 \begin{equation}\label{eq:estin2_in_lemma2}
 \frac{|u_{(\xi_0)}(x)|^2}{B_2^{1\over 2}(\xi_0)}\leq \mathop{\sup\limits_{y\in\partial D_{\lambda^2}}}_{0\neq\zeta\in\mathbb{R}^d}\frac{|u_{(\zeta)}(y)|^2}{B_2^\sq(\zeta)}+N(|g|^2_0+\|f(\cdot)\|_{0,1}^2), \quad \forall (x,\xi_0)\in D_{\lambda^2}\times\mathbb{R}^d\setminus\{0\},
\end{equation}
where $N=N(K_0, d, d_1, k, D, \mu, L, L_0)$, and $B_1$ and $B_2$ are defined preceding Lemmas \ref{le:quasiestimate1}. Then we have
\begin{equation}
  |u_{(\xi_0)}(x)|^2\leq  N\left(|\xi_0|^2+\frac{|\psi_{(\xi_0)}(x)|^2}{\psi^{\frac{3}{2}}(x)}\right)(|g|^2_1+\|f(\cdot)\|_{0,1}^2),\quad \forall (x,\xi_0)\in D\times\mathbb{R}^d.
\end{equation}
In particular, $N$ does not depend on $|u_x|_0$.
\end{Lemma}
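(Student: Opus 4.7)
The plan is to treat the hypotheses \eqref{eq:estin1_in_lemma2}--\eqref{eq:estin2_in_lemma2} as a coupled two-region system, use Lemma \ref{le:barriers} to decouple them, and then send $\delta_1 \to 0$ to trade the artificial ``boundary'' $\{\psi = \delta_1\}$ for the genuine boundary $\partial D$, where $u=g$ is known.

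Write $N' := N(|g|_0^2 + \|f(\cdot)\|_{0,1}^2)$ and let $M_1(\delta_1), M_2$ denote the suprema appearing on the right-hand sides of \eqref{eq:estin1_in_lemma2} and \eqref{eq:estin2_in_lemma2}; both are finite since $u\in C^1(\overline{D})$ and $B_1, B_2 > 0$ for $\zeta \neq 0$. Decompose $\partial D_{\delta_1}^\lambda = \{\psi=\delta_1\} \cup \{\psi=\lambda\}$ and split $M_1(\delta_1) = \max(M_1^-(\delta_1), M_1^+(\delta_1))$ accordingly. At any $y$ with $\psi(y)=\lambda$, Lemma \ref{le:barriers} gives $B_1^{1/2}(y,\zeta) \geq 2 B_2^{1/2}(\zeta)$, and since such $y$ lies in $D_{\lambda^2}$, hypothesis \eqref{eq:estin2_in_lemma2} yields $M_1^+(\delta_1) \leq (M_2+N')/2$. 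Symmetrically, at $y$ with $\psi(y)=\lambda^2$, $B_2^{1/2}(\zeta) \geq 2 B_1^{1/2}(y,\zeta)$ and $y \in D_{\delta_1}^\lambda$, so \eqref{eq:estin1_in_lemma2} gives $M_2 \leq (M_1(\delta_1)+N')/2$. Chaining, $M_1^+(\delta_1) \leq M_1(\delta_1)/4 + 3N'/4$; a case analysis on whether $M_1(\delta_1)$ is attained on the inner or outer component then yields $M_1(\delta_1) \leq M_1^-(\delta_1) + N'$ and hence $M_2 \leq M_1^-(\delta_1)/2 + N'$.

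It remains to bound $M_1^-(\delta_1)$ as $\delta_1\to 0$ by a constant independent of $|u_x|_0$. Fix $y$ with $\psi(y)=\delta_1$ and $\zeta \neq 0$, and decompose $\zeta = \zeta_T + \zeta_N$ with $\zeta_N := (\psi_{(\zeta)}(y)/|\psi_x(y)|^2)\psi_x(y)$, so that $\psi_{(\zeta_T)}(y)=0$, $|\zeta|^2=|\zeta_T|^2+|\zeta_N|^2$, and $|\zeta_N|=|\psi_{(\zeta)}(y)|/|\psi_x(y)|$. For small $\delta_1$ the nearest point $y_0 \in \partial D$ satisfies $|y-y_0|=O(\delta_1)$; since $u=g$ on $\partial D$, tangential derivatives match at $y_0$, so continuity of $u_x$ on $\overline{D}$ together with $|\psi_x|\geq 1/2$ near $\partial D$ gives $|u_{(\zeta_T)}(y)| \leq |g|_1|\zeta_T| + o(1)|\zeta|$ and $|u_{(\zeta_N)}(y)| \leq |u_x|_0|\zeta_N| \leq 2|u_x|_0|\psi_{(\zeta)}(y)|$. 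Using the elementary lower bound $B_1^{1/2}(y,\zeta) \geq c_0(\lambda)(|\zeta|^2 + |\psi_{(\zeta)}(y)|^2/\delta_1^{3/2})$ valid at $\psi=\delta_1$, we obtain
\[
\frac{|u_{(\zeta)}(y)|^2}{B_1^{1/2}(y,\zeta)} \leq \frac{2|g|_1^2 + o(1)}{c_0(\lambda)} + \frac{8|u_x|_0^2\,\delta_1^{3/2}}{c_0(\lambda)},
\]
and the $|u_x|_0$-term vanishes as $\delta_1\to 0$, giving $\limsup_{\delta_1\to 0} M_1^-(\delta_1) \leq N|g|_1^2$.

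Feeding this back into the first step yields $M_1(\delta_1), M_2 \leq N(|g|_1^2 + \|f(\cdot)\|_{0,1}^2)$ uniformly for small $\delta_1$. For arbitrary $x \in D$ we choose $\delta_1 < \psi(x)$ and apply \eqref{eq:estin1_in_lemma2} when $\psi(x) < \lambda$ (otherwise \eqref{eq:estin2_in_lemma2}); the bounds $B_1^{1/2}(x,\xi_0) \leq C(\lambda)(|\xi_0|^2 + |\psi_{(\xi_0)}(x)|^2/\psi^{3/2}(x))$ and $B_2^{1/2}(\xi_0) \leq C(\lambda)|\xi_0|^2$ then deliver the claim. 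The delicate point is the limit in the preceding paragraph: one must verify quantitatively that the singular weight $|\psi_{(\zeta)}|^2/\psi^{3/2}$ built into $B_1$ dominates the product $|u_x|_0^2|\psi_{(\zeta)}|^2$ as $\psi \to 0$, which is precisely the reason the exponent $7/2$ on $\varphi$ (equivalently, $3/2$ on $\psi$) was engineered into the definition of $B_1$.
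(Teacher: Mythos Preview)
Your proof is correct and follows the paper's architecture exactly: decouple the two-region recursion via Lemma~\ref{le:barriers}, reduce everything to the supremum on $\{\psi=\delta_1\}$, then send $\delta_1\to 0$. The one place you deviate is in the limiting step. The paper extracts a convergent subsequence $(y(\delta_1),\xi_0(\delta_1))\to(z,\zeta)$ with $z\in\partial D$, $|\zeta|=1$, and argues by dichotomy: if $\psi_{(\zeta)}(z)\neq 0$ the denominator $B_1^{1/2}$ blows up and the ratio tends to $0$; if $\psi_{(\zeta)}(z)=0$ then $\zeta$ is tangent, so $u_{(\zeta)}(z)=g_{(\zeta)}(z)$ and the ratio is bounded by $|g|_1^2/\sqrt{\lambda}$. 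You instead split $\zeta=\zeta_T+\zeta_N$ at each fixed $\delta_1$ and bound the two pieces quantitatively, obtaining the explicit rate $\delta_1^{3/2}$ for the normal contribution. Your version is more transparent about \emph{why} the normal component disappears (the $\psi^{-3/2}$ weight in $B_1^{1/2}$ beats $|u_x|_0^2$), while the paper's compactness argument is shorter and avoids the bookkeeping of showing that $\zeta_T$ is only approximately tangent at the nearest boundary point. One small point: your $o(1)$ in the tangential bound silently absorbs a term of order $\delta_1|u_x|_0$ coming from the mismatch between $\psi_x(y)$ and $\psi_x(y_0)$; this is harmless since it still vanishes as $\delta_1\to 0$, but it is worth making explicit that the $o(1)$ is allowed to depend on $|u_x|_0$ as long as the \emph{limit} does not.
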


\begin{proof} The proof is same to the arguments next to \cite[relation (3.24)]{WEI1}.

 Set $N_1:=N(|g|^2_0+\|f(\cdot)\|_{0,1}^2)$. In view of formulas (\ref{eq:estin24}), (\ref{eq:estin1_in_lemma2}) and (\ref{eq:estin2_in_lemma2}), we have for $(x,\xi_0)\in \{x\in D: \psi(x)=\lambda\}\times\mathbb{R}^d\setminus\{0\}$
\begin{eqnarray*}
 \ds\frac{|u_{(\xi_0)}(x)|^2}{B_1^{1\over 2}(x,\xi_0)}&\leq&\displaystyle \frac{|u_{(\xi_0)}(x)|^2}{2B_2^{1\over 2}(\xi_0)}
 \leq\displaystyle \mathop{\sup\limits_{\psi(x)=\lambda^2}}_{ 0\neq\xi_0\in\mathbb{R}^d}\frac{|u_{(\xi_0)}(x)|^2}{2B_2^{1\over 2}(\xi_0)}+{1\over 2}N_1\\
 &\leq&\displaystyle \mathop{\sup\limits_{\psi(x)=\lambda^2}}_{ 0\neq\xi_0\in\mathbb{R}^d}\frac{|u_{(\xi_0)}(x)|^2}{4B_1^{1\over 2}(x,\xi_0)}+\frac{N_1}{2}\\
 &\leq&\displaystyle \mathop{\sup\limits_{\psi(x)=\lambda}}_{0\neq\xi_0\in\mathbb{R}^d}\frac{|u_{(\xi_0)}(x)|^2}{4B_1^{1\over 2}(x,\xi_0)}
 +\mathop{\sup\limits_{\psi(x)=\delta_1}}_{0\neq\xi_0\in\mathbb{R}^d}\frac{|u_{(\xi_0)}(x)|^2}{4B_1^{1\over 2}(x,\xi_0)}+\frac{3N_1}{4},
\end{eqnarray*}
which implies that
\begin{equation}\label{eq:estin3}
 \mathop{\sup\limits_{\psi(x)=\lambda}}_{ 0\neq\xi_0\in\mathbb{R}^d} \frac{|u_{(\xi_0)}(x)|^2}{B_1^{1\over 2}(x,\xi_0)}\leq \mathop{\sup\limits_{\psi(x)=\delta_1}}_{0\neq\xi_0\in\mathbb{R}^d}\frac{|u_{(\xi_0)}(x)|^2}{3B_1^{1\over 2}(x,\xi_0)}+N_1.
\end{equation}
Meanwhile, in view of formulas (\ref{eq:estin24}), (\ref{eq:estin1_in_lemma2}) and (\ref{eq:estin3}), we have for $(x,\xi_0)\in\{x: \psi(x)=\lambda^2\}\times\mathbb{R}^d\setminus\{0\}$,
\begin{eqnarray*}
 \ds \frac{|u_{(\xi_0)}(x)|^2}{B_2^{1\over 2}(\xi_0)}&\leq&\displaystyle \frac{|u_{(\xi_0)}(x)|^2}{2B_1^{1\over 2}(x,\xi_0)}
 \leq \mathop{\sup\limits_{\psi(x)=\lambda}}_{ 0\neq\xi_0\in\mathbb{R}^d}\frac{|u_{(\xi_0)}(x)|^2}{2B_1^{1\over 2}(x,\xi_0)}
 +\mathop{\sup\limits_{\psi(x)=\delta_1}}_{0\neq\xi_0\in\mathbb{R}^d}\frac{|u_{(\xi_0)}(x)|^2}{2B_1^{1\over 2}(x,\xi_0)}+{1\over2}N_1\\
 &\leq&\ds \mathop{\sup\limits_{\psi(x)=\delta_1}}_{0\neq\xi_0\in\mathbb{R}^d}\frac{2\, |u_{(\xi_0)}(x)|^2}{3B_1^{1\over 2}(x,\xi_0)}+N_1.
\end{eqnarray*}
Therefore, taking the supremum, we have
\begin{equation}\label{eq:estin4}
  \mathop{\sup\limits_{\psi(x)=\lambda^2}}_{0\neq\xi_0\in\mathbb{R}^d} \frac{|u_{(\xi_0)}(x)|^2}{B_2^{1\over 2}(\xi_0)}\leq \mathop{\sup\limits_{\psi(x)=\delta_1}}_{0\neq\xi_0\in\mathbb{R}^d}\frac{2\, |u_{(\xi_0)}(x)|^2}{3B_1^{1\over 2}(x,\xi_0)}+N_1.
\end{equation}
Combining (\ref{eq:estin1_in_lemma2}) and (\ref{eq:estin3}), we get, for $(x,\xi_0)\in D_{\delta_1}^\lambda\times\mathbb{R}^d\setminus\{0\}$,
\begin{equation}\label{eq:estin5}
  \frac{|u_{(\xi_0)}(x)|^2}{B_1^{1\over 2}(x,\xi_0)}\leq  \mathop{\sup\limits_{\psi(y)=\delta_1}}_{0\neq\xi_0\in\mathbb{R}^d}\frac{4\, |u_{(\xi_0)}(y)|^2}{3B_1^\sq(y,\xi_0)}+2N_1.
\end{equation}
Combining (\ref{eq:estin2_in_lemma2}) and (\ref{eq:estin4}), we get, for $(x,\xi_0)\in D_\lambda^2\times\mathbb{R}^d\setminus\{0\}$,
\begin{equation}\label{eq:estin6}
  \frac{|u_{(\xi_0)}(x)|^2}{B_2^{1\over 2}(\xi_0)}\leq  \mathop{\sup\limits_{\psi(y)=\delta_1}}_{0\neq\xi_0\in\mathbb{R}^d}\frac{2\, |u_{(\xi_0)}(y)|^2}{3B_1^\sq(y,\xi_0)}+2N_1.
\end{equation}
Thus, it remains to estimate
\begin{equation*}
 \varlimsup\limits_{\delta_1\to 0}\mathop{\sup\limits_{\psi(y)=\delta_1}}_{0\neq\xi_0\in\mathbb{R}^d}\frac{|u_{(\xi_0)}(y)|^2}{B_1^\sq(y,\xi_0)}.
\end{equation*}
Notice that for each $\delta_1$, there exist $y(\delta_1)\in\{x:\ \psi(x)=\delta_1\}$ and $\xi_0(\delta_1)\in\{\xi_0:|\xi_0|=1\}$, such that
\begin{equation*}
 \mathop{\sup\limits_{\psi(y)=\delta_1}}_{0\neq\xi_0\in\mathbb{R}^d}\frac{|u_{(\xi_0)}(y)|^2}{B_1^\sq(y,\xi_0)}=\frac{|u_{(\xi_0(\delta_1))}(y(\delta_1))|^2}{B_1^\sq(y(\delta_1),\xi_0(\delta_1))}.
\end{equation*}
A subsequence of $(y(\delta_1),\xi_0(\delta_1))$ converges to some $(z,\zeta)$, as $\delta_1\to 0$, such that $z\in\partial D$ and $|\zeta|=1$.

If $\psi_{(\zeta)}(z)\neq 0$, we have
\begin{equation}\label{eq:estin7}
 \varlimsup\limits_{\delta_1\to 0}\mathop{\sup\limits_{\psi(y)=\delta_1}}_{0\neq\xi_0\in\mathbb{R}^d}\frac{|u_{(\xi_0)}(y)|^2}{B_1^\sq(y,\xi_0)}= \varlimsup\limits_{\delta_1\to 0}\frac{|u_{(\xi_0(\delta_1))}(y(\delta_1))|^2}{B_1^\sq(y(\delta_1),\xi_0(\delta_1))}=0.
\end{equation}
If $\psi_{(\zeta)}(z)= 0$, we have
\begin{equation}\label{eq:estin8}
 \varlimsup\limits_{\delta_1\to 0}\mathop{\sup\limits_{\psi(y)=\delta_1}}_{0\neq\xi_0\in\mathbb{R}^d}\frac{|u_{(\xi_0)}(y)|^2}{B_1^\sq(y,\xi_0)}= \varlimsup\limits_{\delta_1\to 0}\frac{|u_{(\xi_0(\delta_1))}(y(\delta_1))|^2}{B_1^\sq(y(\delta_1),\xi_0(\delta_1))}=\frac{|g_{(\zeta)}(z)|^2}{\sqrt{\lambda}}\leq N|g|^2_1.
\end{equation}
From (\ref{eq:estin5}),(\ref{eq:estin6}),(\ref{eq:estin7}) and (\ref{eq:estin8}), we have
\begin{equation*}
\begin{cases}
  \ds\frac{|u_{(\xi_0)}(x)|^2}{B_1^{1\over 2}(x,\xi_0)}\leq  N(|g|_1^2+\|f(\cdot)\|_{0,1}^2), \quad&  \forall(x,\xi_0)\in D^\lambda\times\mathbb{R}^d\setminus\{0\};\\ \ds\frac{|u_{(\xi_0)}(x)|^2}{B_2^{1\over 2}(\xi_0)}\leq N(|g|_1^2+\|f(\cdot)\|_{0,1}^2), \quad&  \forall (x,\xi_0)\in D_{\lambda^2}\times\mathbb{R}^d\setminus\{0\}.
\end{cases}
\end{equation*}
Notice that $D^\lambda\cup D_{\lambda^2}=D$, and
\begin{equation*}
 \begin{cases}
 B_1^{1\over 2}(x,\xi_0)\leq \ds N\left(|\xi_0|^2+\frac{|\psi_{(\xi_0)}(x)|^2}{\psi^{\frac{3}{2}}(x)}\right), \quad & \forall(x,\xi_0)\in D^\lambda\times\mathbb{R}^d,\\
 B_2^{1\over 2}(\xi_0)\leq N|\xi_0|^2, \quad & \forall\xi_0\in \mathbb{R}^d.
 \end{cases}
\end{equation*}
We then have the desired last assertion.
\end{proof}

Now, we are ready to complete the proof of Theorem \ref{thm:firstorder}.
\begin{proof}[\textbf{Proof of Theorem \ref{thm:firstorder}}]

\textbf{Step 1.} By It\^{o}'s formula, we have
\begin{eqnarray}
 &&\ds d\left(|Y_t^\delta-Y_t|^2e^{2\beta t}\right)\nonumber\\
 &=&\displaystyle e^{2\beta t}2\beta|Y_t^\delta-Y_t|^2dt\nonumber\\
 &&\displaystyle +e^{2\beta t}2\left\langle(Y_t^\delta-Y_t),\left[-f(X_t^\delta,Y_t^\delta,Z_t^\delta)
 (1+2\delta r_t)-\tilde Z_t^\delta\delta\pi_t+f(X_t,Y_t,Z_t)\right]\right\rangle dt\nonumber\\
 &&\displaystyle +e^{2\beta t}\left\|\tilde Z_t^\delta-Z_t\right\|^2dt+e^{2\beta t} 2\llan\left(Y_t^\delta-Y_t\right),\left(\tilde Z_t^\delta-Z_t\right)dW_t\rran, \quad t\in [0,\gamma].\label{eq:estin25}
\end{eqnarray}

Then we calculate by parts:

First, in view of assumptions $(H4)$ and $(H5)$, and Cauchy inequality, we have for $\epsilon\in (0,1)$
 \begin{eqnarray*}
   &&\displaystyle e^{2\beta t}2\llan\left(Y_t^\delta-Y_t\right),\left[-f(X_t^\delta,Y_t^\delta,Z_t^\delta)\left(1+2\delta r_t\right)\right]\rran\\
   &\geq&\displaystyle e^{2\beta t}\left(1+2\delta r_t\right)\left[2\mu|Y_t^\delta-Y_t|^2-2L_0|Y_t^\delta-Y_t|\|Z_t^\delta-Z_t\|-2\llan(Y_t^\delta-Y_t),f(X_t^\delta,Y_t,Z_t)\rran\right]\\
   &\geq&\displaystyle e^{2\beta t}(1+2\delta r_t)|Y_t^\delta-Y_t|^2\left(2\mu-2(1+\epsilon)L_0^2\right)-\frac{1}{2(1+\epsilon)}e^{2\beta t}(1+2\delta r_t)\|Z_t^\delta-Z_t\|^2\\
   &&\displaystyle -e^{2\beta t}2(1+2\delta r_t)\llan(Y_t^\delta-Y_t),f(X_t^\delta,Y_t,Z_t)\rran.
 \end{eqnarray*}
By Cauchy inequality, we have for $\epsilon_1\in (0,1)$
 \begin{displaymath}
   -e^{2\beta t}2\llan(Y_t^\delta-Y_t),\tilde Z_t^\delta\delta\pi_t\rran \geq -e^{2\beta t}\epsilon_1|Y_t^\delta-Y_t|^2-\frac{1}{\epsilon_1}e^{2\beta t}|\delta\pi_t|^2\|\tilde Z_t^\delta\|^2.
 \end{displaymath}
Let $f$ be continuously differentiable and have bounded partial derivatives with respect to $(x,y,z)$ (see Remark \ref{re:sec4}). By Cauchy inequality, we have for $\epsilon_2, \epsilon_3\in (0,1)$
 \begin{eqnarray*}
   &&\displaystyle -e^{2\beta t}2(1+2\delta r_t)\llan(Y_t^\delta-Y_t),f(X_t^\delta,Y_t,Z_t)\rran+e^{2\beta t}2\llan(Y_t^\delta-Y_t),f(X_t,Y_t,Z_t)\rran\\
   &=&\displaystyle -e^{2\beta t}4\delta r_t\llan(Y_t^\delta-Y_t),f(X_t^\delta,Y_t,Z_t)\rran\\
   && -e^{2\beta t}2\llan(Y_t^\delta-Y_t),\left[f(X_t^\delta,Y_t,Z_t)-f(X_t,Y_t,Z_t)\right]\rran\\
   &\geq&\displaystyle -e^{2\beta t}4|\delta r_t|\cdot|Y_t^\delta-Y_t|\left[|f(X_t^\delta,0,0)|+L|Y_t|+L_0\|Z_t\|\right]\\
   && -e^{2\beta t}2\|f(\cdot)\|_{0,1}\cdot|Y_t^\delta-Y_t|\cdot|X_t^\delta-X_t|\\
   &\geq&\ds -e^{2\beta t}(\epsilon_2+\epsilon_3)|Y_t^\delta-Y_t|^2-\frac{16}{\epsilon_2}e^{2\beta t}\delta^2r_t^2\left(L^2|Y_t|^2+L_0^2\|Z_t\|^2\right)-\frac{8}{\epsilon_2}e^{2\beta t}\delta^2r_t^2|f(\cdot,0,0)|^2_0\\
   &&\ds -\frac{\|f(\cdot)\|_{0,1}^2}{\epsilon_3}e^{2\beta t}|X_t^\delta-X_t|^2.
  \end{eqnarray*}

 Second, by Cauchy inequality, we have
 \begin{small}\begin{eqnarray*}
   &&\ds -\frac{1}{2(1+\epsilon)}e^{2\beta t}(1+2\delta r_t)\left\|Z_t^\delta-Z_t\right\|^2\\
   &=&\ds -\frac{1}{2(1+\epsilon)}e^{2\beta t}(1+2\delta r_t)\left\|\tilde Z_t^\delta-Z_t-\tilde Z_t^\delta+Z_t^\delta\right\|^2\\
   &\geq&\ds -\frac{(1+2\delta r_t)}{1+\epsilon}e^{2\beta t}\left\|\tilde Z_t^\delta-Z_t\right\|^2-\frac{(1+2\delta r_t)}{1+\epsilon}e^{2\beta t}\left\|Z_t^\delta\right\|^2\left|\sqrt{1+2\delta r_t}e^{\delta P_t}-1\right|^2,
 \end{eqnarray*}\end{small}
 where
 \begin{eqnarray*}
   &&\ds (1+2\delta r_t)\| Z_t^\delta\|^2\left|\sqrt{1+2\delta r_t}e^{\delta P_t}-1\right|^2\leq 2\|\tilde Z_t^\delta\|^2\left[(\sqrt{1+2\delta r_t}-1)^2+(e^{\delta P_t}-1)^2\right]\\
   &\leq& \ds N\|\tilde Z_t^\delta\|^2\left(\delta^2 r_t^2+\delta^2\|P_t\|^2\right)+o(\delta^3).
 \end{eqnarray*}

Third, condition (\ref{ass:co1}) is satisfied for $(x, t)\in D_{\delta_1}^\lambda\times[0,\gamma_1]$. For fixed  $(x, \xi_0)\in D_{\delta_1}^\lambda \times \mathbb{R}^d$, we can always choose a small $\delta$ such that $x+\delta\xi_0\in D_{\delta_1}^\lambda$. In view of Lemma \ref{le:BSDE},  Assertion \rm{(iii)} in Lemma \ref{le:quasiestimate1}, Proposition \ref{pro:estin9} and H\"{o}lder inequality, we have
\begin{small}\begin{eqnarray*}
  &&\ds E_{x,\xi_0}\int_0^{\gamma_1} |(r_s,\pi_s,P_s)|^2|(Y_s,Z_s,Y_s^\delta,\tilde Z_s^\delta)|^2 e^{2\beta s}ds\\
  &\leq&\ds E_{x,\xi_0}\left[\sup\limits_{0\le t\leq\gamma_1} |(r_t,\pi_t,P_t)|^2\int_0^{\gamma_1}|(Y_s,Z_s,Y_s^\delta,\tilde Z_s^\delta)|^2ds\right]\\
  &\leq&\ds E_{x,\xi_0}\left[\sup\limits_{0\le t\leq\gamma_1} |(r_t,\pi_t,P_t)|^4\right]^{\frac{1}{2}}\cdot E_{x,\xi_0}\left[\left(\int_0^{\gamma_1}|(Y_s,Z_s,Y_s^\delta,\tilde Z_s^\delta)|^2 ds\right)^2\right]^{\frac{1}{2}}\\
  &\leq&\ds NB_1^{1\over 2}(x,\xi_0)(|g|_0^2+|f(\cdot,0,0)|_0^2), \quad (x,\xi_0)\in D_{\delta_1}^\lambda\times\mathbb{R}^d.
\end{eqnarray*}\end{small}

\textbf{Step 2.} Choosing $\delta$ small enough, by $(H7)$, we have for $t\in[0,\gamma_1]$
 \begin{equation*}
 2\beta-(\epsilon_1+\epsilon_2+\epsilon_3)+(1+2\delta r_t)\left[2\mu-2(1+\epsilon)L_0^2\right]>0, \quad and \quad \left|\frac{1+2\delta r_t}{1+\epsilon}\right|<1.
 \end{equation*}
 Combining all the above estimates, we have for $(x,\xi_0,t)\in D_{\delta_1}^\lambda\times\mathbb{R}^d\times[0,\gamma_1]$,
  \begin{small}\begin{eqnarray}
   &&\ds E_{x,\xi_0}\left[|Y_t^\delta-Y_t|^2e^{2\beta t}\right]+E_{x,\xi_0}\int_t^{\gamma_1}e^{2\beta s}|Y_s^\delta-Y_s|^2ds+E_{x,\xi_0}\int_t^{\gamma_1}e^{2\beta s}\|\tilde Z_s^\delta-Z_s\|^2ds\nonumber\\
   &\leq&\ds E_{x,\xi_0}\left[|Y_{\gamma_1}^\delta-Y_{\gamma_1}|^2e^{2\beta\gamma_1}\right]+E_{x,\xi_0}\int_t^{\gamma_1}[\frac{1}{\epsilon_1}e^{2\beta s}\delta^2|\pi_s|^2\|\tilde Z_s^\delta\|^2+\frac{16}{\epsilon_2}e^{2\beta s}\delta^2r_s^2(L^2|Y_s|^2+L_0^2\|Z_s\|^2)\nonumber\\
   &&\ds +\frac{N}{1+\epsilon}e^{2\beta s}\delta^2\|\tilde Z_s^\delta\|^2(r_s^2+\|P_s\|^2)
   +\frac{\|f(\cdot)\|_{0,1}^2}{\epsilon_3}e^{2\beta s}|X_s^\delta-X_s|^2+\frac{8}{\epsilon_2}e^{2\beta s}\delta^2r_s^2|f(\cdot,0,0)|^2_0]ds+o(\delta^3)\nonumber\\
  &\leq&\ds E_{x,\xi_0}\left[|Y_{\gamma_1}^\delta-Y_{\gamma_1}|^2e^{2\beta\gamma_1}\right]
  +N\delta^2E_{x,\xi_0}\int_t^{\gamma_1}|(r_s,\pi_s,P_s)|^2 |(Y_s,Z_s,\tilde Z_s^\delta)|^2 ds\nonumber\\
  &&\ds+E_{x,\xi_0}\int_t^{\gamma_1}\frac{\|f(\cdot)\|_{0,1}^2}{\epsilon_3}e^{2\beta s}|X_s^\delta-X_s|^2ds+N|f(\cdot,0,0)|^2_0\  E\int_t^{\gamma_1}\delta^2\left(|\xi_t|^2+\frac{\psi^2_{(\xi_t)}}{\psi^2}\right)ds+o(\delta^3)\nonumber\\
  &\leq&\ds E_{x,\xi_0}\left[|Y_{\gamma_1}^\delta-Y_{\gamma_1}|^2e^{2\beta\gamma_1}\right]+E_{x,\xi_0}\int_t^{\gamma_1}\frac{\|f(\cdot)\|_{0,1}^2}{\epsilon_3}e^{2\beta s}|X_s^\delta-X_s|^2ds\nonumber\\
  &&\ds+N\delta^2B_1^{1\over 2}(x,\xi_0)(|g|^2_0+|f(\cdot,0,0)|^2_0)+o(\delta^3).\label{eq:result1_in_gradient}
  \end{eqnarray}\end{small}

By Cauchy inequality, we have
\begin{equation*}
  E_{x,\xi_0}\int_t^{\gamma_1}e^{2\beta s}|X_s^\delta-X_s|^2ds\leq E_{x,\xi_0}\int_t^{\gamma_1}2\left(\delta^2\left|\frac{X_s^\delta-X_s}{\delta}-\xi_s\right|^2+\delta^2|\xi_s|^2\right)ds.
\end{equation*}
Dividing $\delta^2$ in both sides and let $\delta\to 0$, due to Dominated convergence theorem, formula (\ref{eq:estin11}) and Assertion (iii) in Lemma \ref{le:quasiestimate1}, we know that
\begin{equation}\label{eq:result2_in_gradient}
 \lim\limits_{\delta\to 0}\frac{1}{\delta^2} E_{x,\xi_0}\int_t^{\gamma_1}\|f(\cdot)\|_{0,1}^2\epsilon_3^{-1}e^{2\beta s}|X_s^\delta-X_s|^2ds\leq NB_1^{1\over 2}(x,\xi_0)\|f(\cdot)\|_{0,1}^2.
\end{equation}

For $t=0$, we have
\begin{equation}\label{eq:result3_in_gradient}
 \lim\limits_{\delta\to 0}\left|\frac{Y_0^\delta-Y_0}{\delta}\right|=|u_{(\xi_0)}(x)|.
\end{equation}
In view of formulas (\ref{eq:estin1_in_pro1}), \eqref{eq:result1_in_gradient}, \eqref{eq:result2_in_gradient} and \eqref{eq:result3_in_gradient}, we have
\begin{equation}\label{eq:estin1}
 \frac{|u_{(\xi_0)}(x)|^2}{B_1^{1\over 2}(x,\xi_0)}\leq \sup\limits_{(y,\zeta)\in\partial D_{\delta_1}^\lambda\times\mathbb{R}^d\setminus \{0\}}\frac{|u_{(\zeta)}(y)|^2}{\sqrt{B_1(y,\zeta)}}+N(|g|^2_0+\|f(\cdot)\|_{0,1}^2),\quad \forall (x,\xi_0)\in D_{\delta_1}^\lambda\times\mathbb{R}^d\setminus\{0\}.
\end{equation}

\textbf{Step 3.} In view of Assertion (iii) in Lemma \ref{le:quasiestimate2}, we have for $(x,\xi_0,t)\in D_{\lambda^2}\times\mathbb{R}^d\times[0,\gamma_2]$
  \begin{eqnarray*}
   &&\ds E_{x,\xi_0}\left[|Y_t^\delta-Y_t|^2e^{2\beta t}\right]+E_{x,\xi_0}\int_t^{\gamma_2}e^{2\beta s}|Y_s^\delta-Y_s|^2ds+E_{x,\xi_0}\int_t^{\gamma_2}e^{2\beta s}\|\tilde Z_s^\delta-Z_s\|^2ds\\
   &\leq&\ds E_{x,\xi_0}\left[|Y_{\gamma_2}^\delta-Y_{\gamma_2}|^2e^{2\beta\gamma_2}\right]+E_{x,\xi_0}\int_t^{\gamma_2}[\frac{1}{\epsilon_1}e^{2\beta s}\delta^2|\pi_s|^2\|\tilde Z_s^\delta\|^2+\frac{16}{\epsilon_2}e^{2\beta s}\delta^2r_s^2(L^2|Y_s|^2+L_0^2\|Z_s\|^2)\\
   &&\ds +\frac{N}{1+\epsilon}e^{2\beta s}\delta^2\|\tilde Z_s^\delta\|^2(r_s^2+\|P_s\|^2)
   +\frac{\|f(\cdot)\|_{0,1}^2}{\epsilon_3}e^{2\beta s}|X_s^\delta-X_s|^2\\
   &&\ds+\frac{8}{\epsilon_2}e^{2\beta s}\delta^2r_s^2|f(X_s^\delta,0,0)|^2]ds+o(\delta^3)\\
  &\leq&\ds E_{x,\xi_0}\left[|Y_{\gamma_2}^\delta-Y_{\gamma_2}|^2e^{2\beta\gamma_2}\right]+E_{x,\xi_0}\int_t^{\gamma_2}\frac{\|f(\cdot)\|_{0,1}^2}{\epsilon_3}e^{2\beta s}|X_s^\delta-X_s|^2ds\\
  &&\ds +N\delta^2E_{x,\xi_0}\int_t^{\gamma_2}e^{2\beta s}|(r_s,\pi_s,P_s)|^2 |(Y_s,Z_s,\tilde Z_s^\delta)|^2ds\\
  &&\ds+|f(\cdot,0,0)|^2_0E_{x,\xi_0}\int_t^{\gamma_2}\delta^2e^{2\beta s}|\xi_t|^2ds+o(\delta^3)\\
  &\leq&\ds E_{x,\xi_0}\left[|Y_{\gamma_2}^\delta-Y_{\gamma_2}|^2e^{2\beta\gamma_2}\right]+E_{x,\xi_0}\int_t^{\gamma_2}\frac{\|f(\cdot)\|_{0,1}^2}{\epsilon_3}e^{2\beta s}|X_s^\delta-X_s|^2ds\\
  & &\ds+N\delta^2B_2^{1\over 2}(\xi_0)(|g|^2_0+|f(\cdot,0,0)|^2_0)+o(\delta^3).
  \end{eqnarray*}
Then repeating the arguments in \textbf{Step 2}, and using formula (\ref{eq:estin2_in_pro1}), we conclude that
\begin{equation}\label{eq:estin2}
 \frac{|u_{(\xi_0)}(x)|^2}{B_2^{1\over 2}(\xi_0)}\leq \sup\limits_{(y,\zeta)\in\partial D_{\lambda^2}\times\mathbb{R}^d\setminus \{0\}}\frac{|u_{(\zeta)}(y)|^2}{B_2^\sq(\zeta)}+N(|g|^2_0+\|f(\cdot)\|_{0,1}^2), \quad \forall (x,\xi_0)\in D_{\lambda^2}\times\mathbb{R}^d\setminus\{0\}.
\end{equation}

\textbf{Step 4.} Finally, due to Lemma \ref{le:estin2_in_sec4} and Remark \ref{re:sec4}, we have for $\xi_0\in\mathbb{R}^d$,
\begin{equation}
 |u_{(\xi_0)}(x)|\leq N\left(|\xi_0|+\frac{|\psi_{(\xi_0)}(x)|}{\psi^{\frac{3}{4}}(x)}\right)(|g|_{0,1}+\|f(\cdot)\|_{0,1}),\quad a.e.\ in\ D.
\end{equation}

We can prove (\ref{eq:estin13})  using the same arguments in \textbf{Steps 2} and \textbf{3} and  the BDG inequality.
The proof is complete.
\end{proof}

Before ending this section, we provide some estimates as follows which play an important role in the next subsection. We emphasize that when $\tilde r$ is not vanishing, Theorem \ref{thm:firstorder} still holds.
\begin{Corollary}
Let $(Y,Z)$ be the unique solution of BSDE (\ref{eq:BSDE1}) and $(Y^\delta, Z^\delta)$ be the unique solution of FBSDE (\ref{eq:FBSDE1}), with $(\tilde\pi_t,\tilde P_t)$ vanishing. Let assumptions $(H1)$-$(H5)$ and $(H7)$ be satisfied. If $f\in C^1(\overline{D}\times\mathbb{R}^k\times\mathbb{R}^{k\times d_1})$ with bounded partial derivatives and $g, u\in C^1(\overline{D})$, we have for sufficiently small $\delta>0$, there exists a constant $N=N(K_0, d, d_1, k, D, \mu, L, L_0)$ such that

\begin{eqnarray}
 & &\ds E_{x,\xi_0}\left[\left(\int_0^{\gamma_1}e^{2\beta t}|Y_t^\delta-Y_t|^2dt\right)^2+\left(\int_0^{\gamma_1}e^{2\beta t}\|\tilde Z_t^{\delta}-Z_t\|^2dt\right)^2+\sup\limits_{0\leq t\leq\gamma_1}(e^{4\beta t}|Y_t^\delta-Y_t|^4)\right]\nonumber\\
 &\leq&\ds N\delta^4 B_3^{\frac{1}{2}}(x,\xi_0)(|g|^4_1+\|f(\cdot)\|_{0,1}^4)+\delta^4 o(\delta, n,T),\quad \forall (x,\xi_0)\in D_{\delta_1}^\lambda\times\mathbb{R}^d, \label{eq:estin14}
 \end{eqnarray}
 and
 \begin{eqnarray}
 & &\ds E_{x,\xi_0}\left[\left(\int_0^{\gamma_2}e^{2\beta t}|Y_t^\delta-Y_t|^2dt\right)^2+\left(\int_0^{\gamma_2}e^{2\beta t}\|\tilde Z_t^{\delta}-Z_t\|^2dt\right)^2+\sup\limits_{0\leq t\leq\gamma_2}(e^{4\beta t}|Y_t^\delta-Y_t|^4)\right]\nonumber\\
 &\leq&\ds N\delta^4 B_4^{\frac{1}{2}}(\xi_0)(|g|^4_1+\|f(\cdot)\|_{0,1}^4)+\delta^4 o(\delta, n,T), \quad \forall (x,\xi_0)\in D_{\lambda^2}\times\mathbb{R}^d,\label{eq:estin15}
\end{eqnarray}
where $o(\delta, n,T)$ is an infinitesimal  as first $\delta\to 0$ and then $T, n\to \infty$.
\end{Corollary}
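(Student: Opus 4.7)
The strategy will mirror that of Theorem~\ref{thm:firstorder}, but the It\^o expansion will be carried out at the fourth order in $|Y_t^\delta-Y_t|$ and, accordingly, the fourth-order quasi-derivative bounds (Lemmas~\ref{le:quasiestimate1} and~\ref{le:quasiestimate2}) will be replaced by the eighth-order bounds (Lemmas~\ref{le:quasiestimate3} and~\ref{le:quasiestimate4}). The scales match because $B_3^{1/2}(x,\xi_0)\sim|\xi_0|^4$ and $B_4^{1/2}(\xi_0)\sim|\xi_0|^4$ are precisely the weights needed for fourth moments of $Y^\delta-Y$.

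The first step is to apply It\^o's formula to $e^{4\beta t}|Y_t^\delta-Y_t|^4$ (which amounts to squaring \eqref{eq:estin25} and reorganizing). This will produce: (a) a ``good'' drift $-c\,e^{4\beta t}|Y^\delta-Y|^4$ together with a negative $\|Z^\delta-Z\|^2|Y^\delta-Y|^2$ coming from $(H4)$, $(H5)$ and $(H7)$, to be absorbed via Cauchy--Young as in the proof of Theorem~\ref{thm:firstorder}; (b) cross terms $\delta^2|(r_t,\pi_t,P_t)|^2\bigl(|Y|^2+\|Z\|^2+\|\tilde Z^\delta\|^2\bigr)|Y^\delta-Y|^2$; and (c) a forcing $\|f(\cdot)\|_{0,1}^2|X^\delta-X|^2|Y^\delta-Y|^2$. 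The cross terms in (b) will be split by H\"older into an eighth-moment of $|(r,\pi,P)|$ and a fourth-moment of $|(Y,Z,Y^\delta,\tilde Z^\delta)|$: the former is bounded by $NB_3^{1/2}(x,\xi_0)$ via Lemma~\ref{le:quasiestimate3}(ii) (each coefficient is of the order $|\xi_t|+|\psi_{(\xi_t)}/\psi|$), and the latter by $|g|_0^4+|f(\cdot,0,0)|_0^4\le N(|g|_1^4+\|f(\cdot)\|_{0,1}^4)$ via Proposition~\ref{pro:estin9}(ii). The forcing in (c) will be treated by writing $|X^\delta-X|\le\delta|\xi_t|+\delta\bigl|(X^\delta-X)/\delta-\xi_t\bigr|$ and invoking \eqref{eq:estin11} with $p=4$ together with Lemma~\ref{le:quasiestimate3}(ii); the residual will contribute the $o(\delta,n,T)$ term.

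Next, the terminal piece $E\bigl[e^{4\beta\gamma_1}|Y_{\gamma_1}^\delta-Y_{\gamma_1}|^4\bigr]=E\bigl[e^{4\beta\gamma_1}|u(X_{\gamma_1}^\delta)-u(X_{\gamma_1})|^4\bigr]$ (Lemma~\ref{le:BSDE4}) will be controlled by the mean-value theorem under the smoothing of Remark~\ref{re:sec4} combined with the gradient bound of Theorem~\ref{thm:firstorder}: along the segment from $X_{\gamma_1}$ to $X_{\gamma_1}^\delta$ one obtains a pointwise estimate in the spirit of Lemma~\ref{le:estin1_in_sec4}, and an eighth-moment application of Lemma~\ref{le:quasiestimate3}(ii) then yields the desired $N\delta^4 B_3^{1/2}(x,\xi_0)(|g|_1^4+\|f(\cdot)\|_{0,1}^4)$ scaling. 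The $\sup$-in-time moment on the left will be extracted by the standard BDG argument applied to the martingale $\int_0^\cdot e^{4\beta s}|Y^\delta-Y|^2\langle Y^\delta-Y,(\tilde Z^\delta-Z)\,dW_s\rangle$, followed by a Cauchy--Schwarz split of $E\bigl[\bigl(\int e^{8\beta s}|Y^\delta-Y|^6\|\tilde Z^\delta-Z\|^2\,ds\bigr)^{1/2}\bigr]$ into $\tfrac{1}{2}E\bigl[\sup e^{4\beta t}|Y^\delta-Y|^4\bigr]$ (absorbed on the left) plus a product of already-estimated fourth- and second-moment integrals. The bound on $D_{\lambda^2}$ will then follow by literally repeating the argument with $(\gamma_1,B_3,\tau_1)$ replaced by $(\gamma_2,B_4,\tau_2)$ and Lemma~\ref{le:quasiestimate3} replaced by Lemma~\ref{le:quasiestimate4}.

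The main obstacle will be the bookkeeping of H\"older exponents: every split must end on the quasi-derivative side at $B_3^{1/2}$ (not $B_3$) and on the BSDE side at the fourth power, while preserving the sign of the monotone drift under $(H7)$ so that both the $\|Z^\delta-Z\|^2|Y^\delta-Y|^2$ term and the $\delta^2\|\tilde Z^\delta\|^2(r_t^2+\|P_t\|^2)$ term arising from the expansion $|\sqrt{1+2\delta r_t}\,e^{\delta P_t}-1|^2=O(\delta^2(r_t^2+\|P_t\|^2))+o(\delta^3)$ can be absorbed. This is a genuine higher-integrability upgrade of the BSDE a priori estimate, and will be indispensable in the next subsection for the limit passage in the Hessian estimate.
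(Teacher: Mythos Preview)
Your proposal is essentially correct and follows the approach implied by the paper. The paper presents this Corollary without proof, merely noting just before it that ``when $\tilde r$ is not vanishing, Theorem~\ref{thm:firstorder} still holds,'' so the intended argument is precisely the fourth-moment upgrade of the proof of Theorem~\ref{thm:firstorder} that you outline: It\^o's formula applied to $e^{4\beta t}|Y_t^\delta-Y_t|^4$, eighth-order quasi-derivative bounds from Lemmas~\ref{le:quasiestimate3} and~\ref{le:quasiestimate4} in place of Lemmas~\ref{le:quasiestimate1} and~\ref{le:quasiestimate2}, Proposition~\ref{pro:estin9}(ii) for the higher BSDE moments, and BDG for the supremum.

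Two small points of clarification on your terminal-value handling. First, once the gradient bound of Theorem~\ref{thm:firstorder} is available, you have $|u_{(\xi)}(y)|^4\le N\,B_1(y,\xi)(|g|_1^4+\|f(\cdot)\|_{0,1}^4)$; combining this with the super-martingale property of $B_1$ (Lemma~\ref{le:quasiestimate1}(i)) and the elementary comparison $B_1\le N\,B_3^{1/2}$ (which follows from $\psi\le 2\varphi$ and $\varphi\le 2\lambda<1$) gives exactly the $N\delta^4 B_3^{1/2}(x,\xi_0)(|g|_1^4+\|f(\cdot)\|_{0,1}^4)$ scaling you claim, without needing to rerun the iterative argument of Lemma~\ref{le:estin2_in_sec4}. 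Second, the Corollary now has $\tilde r_t\neq 0$, so the expansion of $1-(1+2\delta r_t+\delta^2\tilde r_t)^{1/2}e^{\delta P_t}$ picks up an extra $\delta^2\tilde r_t$ contribution; this is absorbed by the same eighth-moment bound since $\tilde r_t = \psi_{(\xi_t)}^2/\psi^2$ and Lemma~\ref{le:quasiestimate3}(ii) controls $E[\sup|\tilde r_t|^4]\le N\,B_3(x,\xi_0)$.
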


\subsection{Interior Hessian Estimate}\label{sec:second}
In this subsection, let $(X, Y, Z)$ be the unique solution of \eqref{eq:SDE2} and \eqref{eq:BSDE2}, $(\xi, \eta)$ be the unique solution of \eqref{eq:Qua1} and \eqref{eq:Qua3}, and $(X^\delta, Y^\delta, Z^\delta)$ be the unique solution of FBSDE \eqref{eq:FBSDE1}. Define $u$ by (\ref{eq:Sol1}). Choose $(\eta_0,\tilde\pi_t,\tilde P_t)$ to be vanishing for simplicity.

\begin{Theorem}\label{thm:secondorder}
 Let assumptions $(H1)$-$(H5)$, $(H6)_1$, $(H7)$-$(H9)$ and $(H10)_2$ be satisfied. Then $u\in C^{1,1}_{loc}(D)\cap C^{0,1}(\overline{D})$, such that for $\xi_0\in \mathbb{R}^d$ and $a.e. \ x\in D$,
\begin{equation*}
 |u_{(\xi_0)(\xi_0)}(x)|\leq N\left(|\xi_0|^2+\frac{\psi_{(\xi_0)}^2(x)}{\psi^{\frac{7}{4}}(x)}\right)[|g|_{1,1}+\|f(\cdot)\|_{0,1}+[f]_{1,1}(1+|g|_1^2+\|f(\cdot)\|_{0,1}^2)]
\end{equation*}
where $N=N(K_0, d, d_1, k, D, L_0, L, \mu)$.
\end{Theorem}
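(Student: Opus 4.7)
The plan is to follow the architecture of Theorem~\ref{thm:firstorder}, but applied to the second-order finite difference
$$\Delta Y_t:=Y_t^\delta-2Y_t+Y_t^{-\delta}, \qquad \tilde\Delta Z_t:=\tilde Z_t^\delta-2Z_t+\tilde Z_t^{-\delta},$$
for which $\delta^{-2}\Delta Y_0\to u_{(\xi_0)(\xi_0)}(x)$ as $\delta\to 0$ once we regularize so that $u,g\in C^2$ and $f\in C^2$ with bounded first and second derivatives, as explained in Remark~\ref{re:sec4}. Near the boundary I would take the first-quasi-derivative coefficients $(r_t,\pi_t,P_t)$ as in Lemma~\ref{le:quasiestimate3} (so that the eighth-order moment estimate of $\xi_t$ and fourth-order moment estimate of $\eta_t$ are available through the barrier $B_3$), and in the interior I would take them as in Lemma~\ref{le:quasiestimate4} (using $B_4$). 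The strategy is to establish two local estimates of the form
$$\frac{|u_{(\xi_0)(\xi_0)}(x)|^2}{B_3^{1/2}(x,\xi_0)}\le\sup_{\partial D^\lambda_{\delta_1}}\frac{|u_{(\zeta)(\zeta)}(y)|^2}{B_3^{1/2}(y,\zeta)}+N\mathcal{N}^2,\qquad \frac{|u_{(\xi_0)(\xi_0)}(x)|^2}{B_4^{1/2}(\xi_0)}\le\sup_{\partial D_{\lambda^2}}\frac{|u_{(\zeta)(\zeta)}(y)|^2}{B_4^{1/2}(\zeta)}+N\mathcal{N}^2,$$
where $\mathcal{N}$ is the quantity in the statement, and then iterate them via a two-region argument analogous to Lemma~\ref{le:estin2_in_sec4}, using the obvious analogue of Lemma~\ref{le:barriers} for $B_3,B_4$.

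The core computation is an It\^o expansion of $|\Delta Y_t|^2 e^{4\beta t}$ on $[0,\gamma_1]$ (and then on $[0,\gamma_2]$). The generator part produces
$$-(1{+}2\delta r_t{+}\delta^2\tilde r_t)f(X_t^\delta,Y_t^\delta,Z_t^\delta)+2f(X_t,Y_t,Z_t)-(1{-}2\delta r_t{+}\delta^2\tilde r_t)f(X_t^{-\delta},Y_t^{-\delta},Z_t^{-\delta}),$$
which I would split into (i) a monotonicity contribution $-2\mu|\Delta Y_t|^2$ via $(H5)$--$(H8)$, (ii) a Lipschitz-in-$z$ contribution absorbed by half of the $\|\tilde\Delta Z_t\|^2$ term thanks to $(H7)$ (now tuned against $4\beta$ rather than $2\beta$), and (iii) a Taylor remainder governed by $[f]_{1,1}$ and acting on the second-order state difference $X_t^\delta-2X_t+X_t^{-\delta}$ together with the products of first-order differences $(X_t^\delta-X_t)$, $(Y_t^\delta-Y_t)$ and $(\tilde Z_t^\delta-Z_t)$. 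Here is where the Pardoux--Peng-style BSDE trick flagged in the introduction enters: the cross product $(Y^\delta_t-Y_t)(\tilde Z^\delta_t-Z_t)$ is handled by Cauchy--Schwarz combined with the fourth-moment bounds \eqref{eq:estin14}--\eqref{eq:estin15} of Corollary~4.11, which convert $L^2$ quantities into $L^1$ tail integrals of order $\delta^4$. Using Lemma~\ref{le:BSDE2} to pass $(X_t^\delta-2X_t+X_t^{-\delta})/\delta^2\to\eta_t$, dividing through by $\delta^4$ and sending $\delta\to 0$, $n\to\infty$, $T\to\infty$, the drift contributes only terms of the prescribed order $\mathcal{N}^2$ once the eighth-order moments of $\xi_t$ and fourth-order moments of $\eta_t$ are invoked from Lemmas~\ref{le:quasiestimate3}--\ref{le:quasiestimate4}.

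The terminal contribution $\delta^{-4}E_{x,\xi_0}[|\Delta Y_{\gamma_1}|^2 e^{4\beta\gamma_1}]$ converges, by Lemma~\ref{le:BSDE4} and a continuity argument analogous to Lemma~\ref{le:estin1_in_sec4}, to
$$E_{x,\xi_0}\bigl[\bigl|u_{(\xi_{\gamma_1})(\xi_{\gamma_1})}(X_{\gamma_1})+u_{(\eta_{\gamma_1})}(X_{\gamma_1})\bigr|^2 e^{4\beta\gamma_1}\bigr].$$
The first summand is treated by the Weierstrass-approximation trick of Lemma~\ref{le:estin1_in_sec4} with $B_3$ in place of $B_1$, producing the $\sup_{\partial D^\lambda_{\delta_1}}$ term. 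The second summand is where the main obstacle lies: since $\eta_{\gamma_1}$ need not be tangent to $\partial D^\lambda_{\delta_1}$, the naive bound $|u_{(\eta_{\gamma_1})}|\le|u_x|_\infty|\eta_{\gamma_1}|$ would reintroduce an a priori uncontrolled quantity. The remedy is to apply the already-proved interior gradient estimate of Theorem~\ref{thm:firstorder} pointwise at $X_{\gamma_1}$, giving $|u_x(X_{\gamma_1})|\le N\psi^{-3/4}(X_{\gamma_1})(|g|_{0,1}+\|f(\cdot)\|_{0,1})$, and then to match this singular factor against the fourth-order moment of $\eta_{\gamma_1}$ that is controlled by $B_3$. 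The exponents $15/2$ in $B_3$ and $7/2$ in $B_1$ were precisely chosen so that this cancellation closes, yielding a bound of order $NB_3^{1/2}(x,\xi_0)(|g|_{0,1}+\|f(\cdot)\|_{0,1})^2$.

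The final step is the two-region iteration. Following the scheme of Lemma~\ref{le:estin2_in_sec4}, I would take the supremum of $|u_{(\xi_0)(\xi_0)}|^2/B_3^{1/2}$ over $\{\psi=\lambda\}$, bound it by its value on $\{\psi=\lambda^2\}$ through the comparison $B_3\ge 4B_4$ on $\{\psi=\lambda\}$ and $4B_3\le B_4$ on $\{\psi=\lambda^2\}$, and iterate until the only remaining unknown is $\varlimsup_{\delta_1\to 0}\sup_{\psi(y)=\delta_1}|u_{(\zeta(\delta_1))(\zeta(\delta_1))}(y)|^2/B_3^{1/2}$. For a subsequential limit $(z,\zeta)\in\partial D\times S^{d-1}$, if $\psi_{(\zeta)}(z)\ne 0$ the ratio vanishes by the $\varphi^{15/2}\psi_{(\zeta)}^{8}/\psi^{7}$ factor in $B_3$; if $\psi_{(\zeta)}(z)=0$ then $\zeta$ is tangent to $\partial D$ at $z$ and $u_{(\zeta)(\zeta)}(z)=g_{(\zeta)(\zeta)}(z)+u_x(z)\cdot\zeta_{(\zeta)}(z)$, bounded by $|g|_{1,1}+N|g|_{0,1}$ via $(H6)_1$ and the $C^{0,1}(\overline D)$ regularity of $u$ that is already in hand from Theorem~\ref{thm:firstorder}. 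This yields the stated estimate, since $B_3^{1/2}(x,\xi_0)\le N\bigl(|\xi_0|^2+\psi_{(\xi_0)}^2(x)\psi^{-7/4}(x)\bigr)$ on $D^\lambda$ and $B_4^{1/2}(\xi_0)\le N|\xi_0|^2$ on $D_{\lambda^2}$.
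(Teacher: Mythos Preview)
Your overall architecture matches the paper's: It\^o on $|Y_t^\delta-2Y_t+Y_t^{-\delta}|^2e^{4\beta t}$, splitting the driver into monotone, Lipschitz-in-$z$, and second-order Taylor pieces, controlling cross terms via the $L^4$ bounds of Corollary~4.11, and closing with a two-region iteration for $B_3,B_4$. Those parts are fine.

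The gap is in your treatment of the terminal summand $E_{x,\xi_0}[|u_{(\eta_{\gamma_1})}(X_{\gamma_1})|^2e^{4\beta\gamma_1}]$. You propose to plug in the interior gradient bound $|u_x(X_{\gamma_1})|\le N\psi^{-3/4}(X_{\gamma_1})(|g|_{0,1}+\|f(\cdot)\|_{0,1})$ and ``match the singular factor against the fourth-order moment of $\eta_{\gamma_1}$ controlled by $B_3$''. There is no such cancellation: Lemma~\ref{le:quasiestimate3}(iii) gives only $E_0[\sup_{t\le\tau_1}|\eta_t|^4]\le NB_3(x,\xi_0)$, with no weight in $\psi(X_t)$. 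On the event $\{\psi(X_{\tau_1})=\delta_1\}$ your bound produces a factor $\delta_1^{-3/2}$ that is not compensated as $\delta_1\to 0$. The exponent $15/2$ in $B_3$ is calibrated to make the eighth-moment supermartingale argument for $\xi_t$ close (parallel to $7/2$ in $B_1$ for fourth moments), not to absorb a $\psi^{-3/4}$ factor coming from $u_x$.

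The paper's fix is simpler and avoids any pointwise $\psi^{-3/4}$ altogether. One writes
\[
|u_{(\eta_{\gamma_1})}(X_{\gamma_1})|\le\Bigl[\sup_{y\in\partial D^\lambda_{\delta_1},\,|\zeta|=1}|u_{(\zeta)}(y)|\Bigr]\,|\eta_{\gamma_1}|,
\]
so that $E[|u_{(\eta_{\gamma_1})}|^2]\le(\sup|u_x|)^2\,E|\eta_{\gamma_1}|^2\le N(\sup|u_x|)^2B_3^{1/2}(x,\xi_0)$. The scalar $\sup_{\partial D^\lambda_{\delta_1}}|u_x|$ is then bounded \emph{after} the iteration, in a separate step: on $\{\psi=\lambda\}$ by Theorem~\ref{thm:firstorder} (giving a $\lambda^{-3/4}$-dependent but fixed constant), and on $\{\psi=\delta_1\}\to\partial D$ by splitting into the tangential part ($\le|g|_1$) and the normal part, for which one needs the boundary normal-derivative estimate of Lemma~\ref{le:normalestimate}. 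This is also why your later claim that ``$C^{0,1}(\overline D)$ regularity of $u$ is already in hand from Theorem~\ref{thm:firstorder}'' is inaccurate: Theorem~\ref{thm:firstorder} gives only $C^{0,1}_{loc}(D)$; the boundary Lipschitz bound comes from Lemma~\ref{le:normalestimate}, which you should invoke explicitly.
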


The following lemma is analogous to \cite[Lemma 3.2]{WEI1}.

\begin{Lemma}\label{le:normalestimate}
Let $f\in C^1(\overline{D}\times\mathbb{R}^k\times\mathbb{R}^{k\times d_1})$ with bounded partial derivatives, $g\in C^2(\overline{D})$, $u\in C^1(\overline{D})$, $(H4)$, $(H5)$, $(H7)$ and $E\tau(x)\leq\psi(x),\ x\in D$ be satisfied. Then we have
\begin{equation*}
 |u_{(n)}(y)|\leq N(|g|_2+|f(\cdot,0,0)|_0), \quad \forall y\in\partial D,
\end{equation*}
where $n:=n(y)$ is the unitary inward normal on $\partial D$ and the positive constant $N$ depends on the quadruple $(K_0, L_0, L, \mu)$. Note that $N$ does not depend on $|u_x|_0$.
\end{Lemma}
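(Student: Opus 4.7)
The strategy is to reduce the boundary normal derivative estimate to an interior comparison of the form $|u(x) - g(x)| \le N\psi(x)$ for all $x \in D$, with $N$ depending on $(K_0, L_0, L, \mu)$ and $(|g|_2, |f(\cdot,0,0)|_0)$. Once this is in hand, for any $y \in \partial D$ and small $t > 0$ one has
\[
\frac{|u(y+tn) - u(y)|}{t} \leq \frac{|u(y+tn) - g(y+tn)|}{t} + \frac{|g(y+tn)-g(y)|}{t} \leq N\,\frac{\psi(y+tn)}{t} + |g|_1,
\]
and since $\psi(y+tn)/t \le |\psi|_1 \le K_0$ by $(H3)$, letting $t \downarrow 0$ gives $|u_{(n)}(y)| \le N(|g|_2+|f(\cdot,0,0)|_0)$.

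To establish $|u - g| \le N\psi$, I will set $v := u - g$ and apply It\^o's formula to $v(X_t)$; combined with the PDE $\cL u = -f(\cdot,u,\nabla u\sigma)$ and $v(X_\tau) = 0$, this yields $v(x) = E\int_0^\tau [\cL g(X_s) + f(X_s, u, \nabla u \sigma)]\,ds$. I will then linearize $f$ via the mean value theorem, writing $f(X_s, u, \nabla u\sigma) - f(X_s, g, \nabla g\sigma) = C_y^s v(X_s) + C_z^s \nabla v(X_s) \sigma(X_s)$, with $\|C_z^s\| \le L_0$ and, by $(H5)$, the symmetric part of $C_y^s$ bounded above by $-\mu I$. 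A Girsanov transform with density $\mathcal{E}_\tau := \exp\bigl(\int_0^\tau (C_z^s)^{*} dW_s - \tfrac{1}{2}\int_0^\tau \|C_z^s\|^2 ds\bigr)$ removes the $\nabla v$ term; under the new measure $\tilde P$, the process $\tilde W_t := W_t - \int_0^t C_z^s\,ds$ is a Brownian motion, $X$ acquires the additional drift $\sigma C_z^{*}$, and $v$ solves the linear BSDE
\[
dv(X_t) = -[G(t) + C_y^t v(X_t)]\,dt + \nabla v\sigma \, d\tilde W_t, \qquad v(X_\tau) = 0,
\]
where $G(s) := \cL g(X_s) + f(X_s, g, \nabla g\sigma)$ satisfies $|G|_0 \le N(|g|_2 + |f(\cdot,0,0)|_0) =: M_1$. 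Representing the solution through the matrix integrating factor $A_s$ with $\dot A_s = A_s C_y^s$ and $A_0 = I$ gives $v(x) = \tilde E\int_0^\tau A_s G(s)\,ds$; a computation on $|A_s^{*} u|^2$ using the monotonicity of $C_y^s$ shows $\|A_s\| \le e^{-\mu s}$, hence $|v(x)| \le M_1\, \tilde E \int_0^\tau e^{-\mu s}\,ds$.

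The last and most delicate step is the exit-time bound $\tilde E_x \int_0^\tau e^{-\mu s}\,ds \le N\psi(x)$. Under $\tilde P$, the added drift $\sigma C_z^{*}$ may invalidate the super-solution property $\cL \psi \le -1$, leaving only the weaker $\tilde{\cL}\psi \le -1 + K_0 L_0 |\psi|_1$, which need not be negative. I propose an exponential-in-$\psi$ barrier $\hat\psi(x) := A(1 - e^{-B\psi(x)})$: the direct computation
\[
\tilde{\cL}\hat\psi = AB\,e^{-B\psi}\bigl[\tilde{\cL}\psi - B\langle a\nabla\psi, \nabla\psi\rangle\bigr],
\]
together with $(H9)$ (which gives $\langle a\nabla\psi, \nabla\psi\rangle \ge c_0 > 0$ in a neighborhood of $\partial D$), shows that $B$ can be chosen large enough to make this quantity strongly negative near the boundary; the $-\mu \hat\psi$ piece of the operator $-\mu\hat\psi + \tilde{\cL}\hat\psi$ then handles the interior once $A$ is taken large. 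With $A, B$ depending only on $K_0, L_0, \mu$, I secure $-\mu\hat\psi + \tilde{\cL}\hat\psi \le -1$ throughout $D$. Since $1 - e^{-B\psi} \le B\psi$ gives $\hat\psi \le AB\,\psi$, applying It\^o's formula to $e^{-\mu t} \hat\psi(X_t)$ under $\tilde P$ produces $\tilde E_x \int_0^\tau e^{-\mu s}\,ds \le \hat\psi(x) \le AB\,\psi(x)$, completing the argument.

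The main obstacle will be this barrier construction: the Girsanov drift correction couples the BSDE estimate to the boundary geometry via $(H9)$, and the exponential-in-$\psi$ form appears essential because a naive rescaling $A\psi$ would only succeed under the restrictive condition $K_0 L_0 |\psi|_1 < 1$. The vector-valued linearization of $f$, the vector Girsanov theorem, and the matrix integrating factor are all technical but follow the standard BSDE playbook.
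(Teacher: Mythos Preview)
Your overall reduction to $|u(x)-g(x)|\le N\psi(x)$ is correct and is exactly how the paper begins, but your route to this bound has a genuine gap and an unnecessary detour.

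The gap is the Girsanov step. For a system ($k>1$), the linearized $z$-term reads $(C_z^s\tilde Z_s)^m=\sum_{n,j}(\partial_{z^{nj}}f^m)\tilde Z_s^{nj}$, while the martingale part is $\sum_j\tilde Z_s^{mj}\,dW^j_s$; absorbing the former into the latter by a single change of measure requires $(\partial_{z^{nj}}f^m)=\delta_{mn}\theta^j$ for some $\mathbb{R}^{d_1}$-valued $\theta$, i.e.\ each $f^m$ depends only on the $m$-th row of $z$ and in the same way for every $m$. No ``vector Girsanov'' removes a generic linear $z$-coupling in a $k$-dimensional BSDE, so for general $f$ satisfying $(H4)$--$(H5)$ your step~4 fails. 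Your matrix integrating factor bound $\|A_s\|\le e^{-\mu s}$ is correct, but you never get to use it.

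The detour is the barrier $\hat\psi=A(1-e^{-B\psi})$, which rests on $(H9)$. The lemma does not assume $(H9)$, and the paper's proof does not use it.

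The paper bypasses both issues by working directly with the shifted pair $\tilde Y_t:=Y_t-g(X_t)$, $\tilde Z_t:=Z_t-\nabla g(X_t)\sigma(X_t)$, which solves a BSDE with terminal value $0$ and driver $\tilde f(X,\tilde y,\tilde z)=f(X,\tilde y+g,\tilde z+\nabla g\,\sigma)+\cL g$. This $\tilde f$ inherits $(H4)$--$(H5)$ with the same constants, and $|\tilde f(\cdot,0,0)|_0\le N(|g|_2+|f(\cdot,0,0)|_0)=:M_1$. The standard energy estimate (It\^o on $|\tilde Y_t|^2$, using monotonicity and $(H7)$) then yields
\[
E\!\int_0^\tau\!\bigl(|\tilde Y_t|^2+\|\tilde Z_t\|^2\bigr)dt\;\le\;C\,E\!\int_0^\tau\!|\tilde f(X_t,0,0)|^2\,dt\;\le\;C\,M_1^2\,E\tau(x)\;\le\;C\,M_1^2\,\psi(x),
\]
with no measure change and hence no perturbation of the drift in $\cL\psi\le-1$. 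From the representation $u(x)-g(x)=E\!\int_0^\tau[\cL g(X_t)+f(X_t,Y_t,Z_t)]\,dt$ one bounds the integrand by $N M_1+L|\tilde Y_t|+L_0\|\tilde Z_t\|$ and applies Cauchy--Schwarz in $t$ against $E\tau(x)\le\psi(x)$ to obtain $|u(x)-g(x)|\le N M_1\,\psi(x)$. This works verbatim for any $k\ge1$ and stays within the stated hypotheses.
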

\begin{proof}
 Fix a $y\in\partial D$, and choose $\epsilon_0>0$ so that $x:=y+\epsilon n\in D$ for $0<\epsilon\leq \epsilon_0$.
 Set $\tilde Y_t:=Y_t-g(X_t)$ and $\tilde Z_t:=Z_t-\nabla g(X_t)\sigma(X_t)$, for $t\in[0,\tau]$, where $X$ is the unique solution to \eqref{eq:SDE2} and $(Y,Z)$ is the unique solution to \eqref{eq:BSDE2}. As $g\in C^2(\overline{D})$, $(\tilde Y, \tilde Z)$ is the unique solution to the BSDE
 \begin{equation}\label{eq:BSDE3}
  \begin{cases}
   &d\tilde Y_t=[-f(X_t,\tilde Y_t+g(X_t),\tilde Z_t+\nabla g(X_t)\sigma(X_t))-\cL g(X_t)]dt+\tilde Z_tdW_t, \quad t\in[0,\tau),\\
   &\tilde Y_\tau=0.
  \end{cases}
 \end{equation}
 With the analogue of \eqref{eq:estin10}, we have
 \begin{equation}\label{eq:estin1_in_normal}
   E\left[\sup\limits_{0\leq t\leq\tau}|\tilde Y_t|^2\right]+E\int_0^\tau \|\tilde Z_t\|^2dt\leq N(|g|_2^2+|f(\cdot,0,0)|^2_0)\psi(x).
 \end{equation}
 Then, by formulas (\ref{eq:Sol1}), (\ref{eq:BSDE3}) and \eqref{eq:estin1_in_normal}, we have
 \begin{eqnarray*}
   u(x)&=&\ds g(x)+E_x\int_0^\tau \cL g(X_t)+f(X_t,\tilde Y_t+g(X_t),\tilde Z_t+\nabla g(X_t)\sigma(X_t))dt\\
    &\leq&\ds g(x)+N(|g|_2+|f(\cdot,0,0)|_0)\psi(x).
 \end{eqnarray*}
 Since $u(y)=g(y)$ and $\psi(y)=0$, we have
 \begin{equation*}
   \frac{u(y+\epsilon n)-u(y)}{\epsilon}\leq \frac{g(y+\epsilon n)-g(y)}{\epsilon}+N(|g|_2+|f(\cdot,0,0)|_0)\left[\frac{\psi(y+\epsilon n)-\psi(y)}{\epsilon}\right].
 \end{equation*}
 Letting $\epsilon\to 0$, we get
 \begin{equation*}
  u_{(n)}(y)\leq N(|g|_2+|f(\cdot,0,0)|_0).
 \end{equation*}
 Replacing $u$ with $-u$ yields an estimate of $u_{(n)}$ from below. The proof is complete.
\end{proof}

Now we are ready to prove Theorem \ref{thm:secondorder}.
\begin{proof}[\textbf{Proof of Theorem \ref{thm:secondorder}}]

Set $\nabla X_t^\delta:=X_t^\delta-X_t$ and $\nabla X_t^{-\delta}:=X_t-X_t^{-\delta}$. Define $\nabla Y_t^\delta, \nabla Y_t^{-\delta}$, $\nabla Z_t^\delta$ and $\nabla Z_t^{-\delta}$ in the same way. Set $\bar{\gamma}:=\tau\wedge\tau^\delta\wedge\tau^{-\delta}\wedge k_n\wedge k_n'\wedge T$, where $k_n:=\inf\{t\geq 0;|\xi_t|\geq n\}$, $k_n':=\inf\{t\geq 0;|\eta_t|\geq n\}$ and $T\in[1,\infty)$.

\textbf{Step 1.} Set $\bar{\gamma}_1:=\bar{\gamma}\wedge\tau_1$.
Write
\begin{eqnarray*}
&&{\rm II_1}:=2\langle(\nabla Y_t^\delta-\nabla Y_t^{-\delta}),(2\delta r_t)[f(X_t^\delta,Y_t^\delta,Z_t^\delta)- f(X_t^{-\delta},Y_t^{-\delta},Z_t^{-\delta})],\\
 &&{\rm II_2}:=2\langle(\nabla Y_t^\delta-\nabla Y_t^{-\delta}),(\delta^2 \tilde r_t)[f(X_t^\delta,Y_t^\delta,Z_t^\delta)+ f(X_t^{-\delta},Y_t^{-\delta},Z_t^{-\delta})]\rangle,\\
  &&{\rm II_3}:=2\langle(\nabla Y_t^\delta-\nabla Y_t^{-\delta}),(\tilde Z_t^\delta-\tilde Z_t^{-\delta})\delta\pi_t\rangle,\\
  &&{\rm II_4}:=2\langle(\nabla Y_t^\delta-\nabla Y_t^{-\delta}),[f(X_t^\delta,Y_t^\delta,Z_t^\delta)
 -2f(X_t,Y_t,Z_t)+f(X_t^{-\delta},Y_t^{-\delta},Z_t^{-\delta})]\rangle.
 \end{eqnarray*}
By It\^{o}'s formula, we have for $(x,\xi_0,t)\in D_{\delta_1}^\lambda\times\mathbb{R}^d\times[0,\bar{\gamma}_1]$
\begin{eqnarray*}
 &&d\left(\left|\nabla Y_t^\delta-\nabla Y_t^{-\delta}\right|^2e^{4\beta t}\right)\\
 &=&\ds -({\rm II_1}+{\rm II_2}+{\rm II_3}+{\rm II_4})e^{4\beta t}dt+4\beta\left|\nabla Y_t^\delta-\nabla Y_t^{-\delta}\right|^2e^{4\beta t}dt\\
 &&\displaystyle +\left\|\tilde Z_t^\delta-2Z_t+\tilde Z_t^{-\delta}\right\|^2e^{4\beta t}dt+2\llan\left(\nabla Y_t^\delta-\nabla Y_t^{-\delta}\right),\left(\tilde Z_t^\delta-2Z_t+\tilde Z_t^{-\delta}\right)e^{4\beta t}dW_t\rran.
\end{eqnarray*}

First, we estimate the term ${\rm II_1}$. Let $f\in C^1(\overline{D}\times\mathbb{R}^k\times\mathbb{R}^{k\times d_1})$ with bounded partial derivatives and $(H4)$ be satisfied. By Cauchy inequality, we have for $\epsilon\in(0,1)$
\begin{eqnarray*}
 {\rm II_1}&\leq&\displaystyle \epsilon |\nabla Y_t^\delta-\nabla Y_t^{-\delta}|^2
  +4\delta^2r_t^2\epsilon^{-1}|f(X_t^\delta,Y_t^\delta,Z_t^\delta)-f(X_t^{-\delta},Y_t^{-\delta},Z_t^{-\delta})|^2\\
  &\leq&\displaystyle \epsilon |\nabla Y_t^\delta-\nabla Y_t^{-\delta}|^2+N\delta^2r_t^2\|f(\cdot)\|_{0,1}^2\epsilon^{-1}|X_t^\delta-X_t^{-\delta}|^2\\
  & &\ds+N\delta^2r_t^2\epsilon^{-1}(|Y_t^\delta-Y_t^{-\delta}|^2+\|Z_t^\delta-Z_t^{-\delta}\|^2).
\end{eqnarray*}
In view of the estimates in Lemmas \ref{le:stopping} and \ref{le:quasiestimate3} and formula (\ref{eq:estin11}), we have for $(x,\xi_0)\in D_{\delta_1}^\lambda\times\mathbb{R}^d$,
\begin{eqnarray}
  &&\ds E_{x,\xi_0}\left[\left(\int_0^{\bar{\gamma}_1}|X_t^\delta-X_t^{-\delta}|^2dt\right)^2\right]\nonumber\\
  &\leq&\displaystyle NE_{x,\xi_0}\left[\left( \int_0^{\bar{\gamma}_1}(|X_t^\delta-X_t|^2+|X_t-X_t^{-\delta}|^2)dt \right)^2\right]\nonumber\\
  &\leq&\ds NE_{x,\xi_0} \left[\sup\limits_{0\leq t\leq\bar{\gamma}_1}|X_t^\delta-X_t|^4\cdot \bar{\gamma}_1^2 \right]\nonumber\\
  &\leq&\ds N\delta^4  E_{x,\xi_0}\left[\sup\limits_{0\leq t\leq\bar{\gamma}_1}\left|\frac{X_t^\delta-X_t}{\delta}-\xi_t\right|^8\right]^{\frac{1}{2}}
  +N\delta^4 E_{\xi_0}\left[\sup\limits_{0\leq t\leq\bar{\gamma}_1}|\xi_t|^8\right]^{\frac{1}{2}}\nonumber\\
  &\leq&\ds N\delta^4B_3^{\frac{1}{2}}(x,\xi_0)+o(\delta^5).\label{eq:estin1_in_hessian}
\end{eqnarray}
In view of formula (\ref{eq:estin14}), we have for $(x,\xi_0)\in D_{\delta_1}^\lambda\times\mathbb{R}^d$,
\begin{eqnarray}
 \ds E_{x,\xi_0}\left[\left(\int_0^{\bar{\gamma}_1}|Y_t^\delta-Y_t^{-\delta}|^2 e^{2\beta t}dt\right)^2\right]&\leq&\ds NE_{x,\xi_0}\left[\left(\int_0^{\bar{\gamma}_1}|Y_t^\delta-Y_t|^2 e^{2\beta t}dt\right)^2\right]\nonumber\\
 &\leq&\ds N\delta^4 B_3^{\frac{1}{2}}(x,\xi_0)(|g|_1^4+\|f(\cdot)\|_{0,1}^4)+\delta^4o(\delta, n, T),
 \label{eq:estin2_in_hessian}
 \end{eqnarray}
 and
\begin{eqnarray}
&&\ds E_{x,\xi_0} \left[\left( \int_0^{\bar{\gamma}_1}\|Z_t^\delta-Z_t^{-\delta}\|^2e^{2\beta t}dt \right)^2\right]\nonumber\\
 &\leq&\displaystyle NE_{x,\xi_0} \left[\left( \int_0^{\bar{\gamma}_1}(\|Z_t^\delta-\tilde Z_t^\delta\|^2
 +\|\tilde Z_t^\delta-Z_t\|^2e^{2\beta t})dt \right)^2\right]\nonumber\\
 &\leq&\displaystyle NE_{x,\xi_0} \left[\left( \int_0^{\bar{\gamma}_1}\|Z_t^\delta-\tilde Z_t^\delta\|^2dt \right)^2\right]+N\delta^4B_3^{\frac{1}{2}}(x,\xi_0)(|g|_1^4+\|f(\cdot)\|_{0,1}^4)+\delta^4o(\delta, n, T). \label{eq:estin3_in_hessian}
\end{eqnarray}
Using Taylor expansion to deal with the first term in the preceding inequality, we have
\begin{eqnarray}
 &&\displaystyle 1-(1+2\delta r_t+\delta^2\tilde r_t)^\frac{1}{2}e^{\delta P_t}\nonumber\\
 &=&\displaystyle \left[1-(1+2\delta r_t+\delta^2\tilde r_t)^\frac{1}{2}\right]+(1+2\delta r_t+\delta^2\tilde r_t)^\frac{1}{2}\left[1-e^{\delta P_t}\right]\nonumber\\
 &=&\displaystyle \left[-\frac{1}{2}(2\delta r_t+\delta^2\tilde r_t)+\frac{1}{8}(2\delta r_t+\delta^2\tilde r_t)^2+o(\delta^3)\right]\nonumber\\
 &&\displaystyle +(1+2\delta r_t+\delta^2\tilde r_t)^\frac{1}{2}\left[-\delta P_t-\frac{1}{2}\delta^2P_t^2+o(\delta^3)\right].\label{eq:taylor_in_hessian}
\end{eqnarray}
 In view of Assertion (ii) in Lemma \ref{le:quasiestimate3} and Proposition \ref{pro:estin9}, for sufficiently small $\delta$ and $t\in[0,\bar{\gamma}_1]$, we have

\begin{eqnarray}
 &&\displaystyle E_{x,\xi_0} \left[\left( \int_0^{\bar{\gamma}_1}\|Z_t^\delta-\tilde Z_t^\delta\|^2dt \right)^2\right]\nonumber\\
 &=&\ds E_{x,\xi_0} \left[\left( \int_0^{\bar{\gamma}_1}\|Z_t^\delta\|^2( 1-(1+2\delta r_t+\delta^2\tilde r_t)^\frac{1}{2}e^{\delta P_t})^2dt \right)^2\right]\nonumber\\
 &=&\ds E_{x,\xi_0} \left[\left( \int_0^{\bar{\gamma}_1}\|Z_t^\delta\|^2(\delta^2|(r_t,P_t)|^2+o(\delta^3))dt \right)^2\right]\nonumber\\
 &\leq&\ds N\delta^4 E_{x,\xi_0} \left[\sup\limits_{0\leq t\leq\bar{\gamma}_1}|(r_t,P_t)|^4\cdot\left(\int_0^{\bar{\gamma}_1}\|\tilde Z_t^\delta\|^2dt\right)^2 \right]+o(\delta^6)\nonumber\\
 &\leq&\displaystyle N\delta^4B_3^{\frac{1}{2}}(x,\xi_0)(|g|_0^4+|f(\cdot,0,0)|^4_0)+o(\delta^6).\label{eq:estin4_in_hessian}
\end{eqnarray}
Collecting the above estimates (\ref{eq:estin1_in_hessian}), (\ref{eq:estin2_in_hessian}), (\ref{eq:estin3_in_hessian}) and (\ref{eq:estin4_in_hessian}), we have
\begin{eqnarray}
&&\ds E_{x,\xi_0} \left[\left(\int_0^{\bar{\gamma}_1}e^{2\beta t}|(X_t^\delta,Y_t^\delta,Z_t^\delta)-(X_t^{-\delta},Y_t^{-\delta},Z_t^{-\delta})|^2dt\right)^2\right]\nonumber\\
 &\leq&\ds N\delta^4 B_3^{\frac{1}{2}}(x,\xi_0)(1+|g|_1^4+\|f(\cdot)\|_{0,1}^4)+\delta^4 o(\delta, n,T).\label{eq:estin28}
\end{eqnarray}

Thus, in view of Assertion \rm{(iii)} in Lemma \ref{le:quasiestimate1} and formula \eqref{eq:estin28}, we have for $(x,\xi_0)\in D_{\delta_1}^\lambda\times\mathbb{R}^d$
\begin{eqnarray}
 &&\ds E_{x,\xi_0}\int_t^{\bar{\gamma}_1}{\rm II_1}e^{4\beta s}ds\nonumber\\
 &\leq &\ds E_{x,\xi_0}\int_t^{\bar{\gamma}_1}\epsilon |\nabla Y_s^\delta-\nabla Y_s^{-\delta}|^2e^{4\beta s}ds\nonumber\\
 & &\ds +E_{x,\xi_0}\int_0^{\bar{\gamma}_1}N\delta^2r_t^2\epsilon^{-1}[\|f(\cdot)\|_{0,1}^2| X_t^\delta-X_t^{-\delta}|^2+|(Y_t^\delta,Z_t^\delta)-(Y_t^{-\delta},Z_t^{-\delta})|^2]e^{4\beta t}dt\nonumber\\
 &\leq&\ds E_{x,\xi_0}\int_t^{\bar{\gamma}_1}\epsilon |\nabla Y_s^\delta-\nabla Y_s^{-\delta}|^2e^{4\beta s}ds+N\delta^4 B_3^\frac{1}{2}(x,\xi_0)(|g|_1^2+\|f(\cdot)\|_{0,1}^2)+\delta^4 o(\delta, n,T).\nonumber\\
 \label{eq:estin21}
\end{eqnarray}
Second, we estimate the term ${\rm II_2}$. By $(H4)$ and Cauchy inequality, we have for $\epsilon\in(0,1)$
\begin{eqnarray*}
  {\rm II_2}&\leq&\displaystyle \epsilon \left|\nabla Y_t^\delta-\nabla Y_t^{-\delta}\right|^2+\delta^4\tilde r_t^2\epsilon^{-1}\left|f(X_t^\delta,Y_t^\delta,Z_t^\delta)+f(X_t^{-\delta},Y_t^{-\delta},Z_t^{-\delta})\right|^2\\
  &\leq&\displaystyle \epsilon \left|\nabla Y_t^\delta-\nabla Y_t^{-\delta}\right|^2+\delta^4\tilde r_t^2\epsilon^{-1}\left[|f(\cdot,0,0)|_0^2+L_0^2|Y_t^\delta|^2+L_0^2\|Z_t^{\delta}\|^2\right].
 \end{eqnarray*}
In view of Assertion \rm{(ii)} in Lemma \ref{le:quasiestimate3} and Proposition \ref{pro:estin9}, we have for $(x,\xi_0)\in D_{\delta_1}^\lambda\times\mathbb{R}^d$
\begin{eqnarray}
 &&\ds E_{x,\xi_0}\int_t^{\bar{\gamma}_1}{\rm II_2}e^{4\beta s}ds\nonumber\\
 &\leq &\ds E_{x,\xi_0}\int_t^{\bar{\gamma}_1}\epsilon |\nabla Y_s^\delta-\nabla Y_s^{-\delta}|^2e^{4\beta s}ds\nonumber\\
 &&\displaystyle +E_{x,\xi_0}\int_0^{\bar{\gamma}_1}\delta^4\tilde r_t^2\epsilon^{-1}\left[|f(\cdot,0,0)|_0^2+L_0^2|Y_t^\delta|^2+L_0^2\|Z_t^{\delta}\|^2\right]e^{4\beta t}dt\nonumber\\
 &\leq&\displaystyle E_{x,\xi_0}\int_t^{\bar{\gamma}_1}\epsilon |\nabla Y_s^\delta-\nabla Y_s^{-\delta}|^2e^{4\beta s}ds+N\delta^4B_3^{\frac{1}{2}}(x,\xi_0)(|g|_0^2+|f(\cdot,0,0)|_0^2).\label{eq:estin22}
\end{eqnarray}
Third, we estimate the term ${\rm II_3}$. Using Cauchy inequality, we have for $\epsilon\in(0,1)$
\begin{equation*}
  {\rm II_3}
  \leq \epsilon \left|\nabla Y_t^\delta-\nabla Y_t^{-\delta}\right|^2+4\delta^2|\pi_t|^2\epsilon^{-1}\left\|\tilde Z_t^\delta-Z_t\right\|^2.
\end{equation*}
In view of Assertion \rm{(ii)} in Lemma \ref{le:quasiestimate3} and formula (\ref{eq:estin14}), we have
 \begin{eqnarray*}
 && \ds E_{x,\xi_0}\int_0^{\bar{\gamma}_1}|\pi_t|^2\left\|\tilde Z_t^\delta-Z_t\right\|^2e^{4\beta t}dt \\
 &\leq&\ds E_{x,\xi_0}\left[\sup\limits_{0\leq t\leq\bar{\gamma}_1} |\pi_t|^4e^{4\beta t}\right]^{\frac{1}{2}}\cdot E_{x,\xi_0}\left[\left(\int_0^{\bar{\gamma}_1}\left\|\tilde Z_t^\delta-Z_t\right\|^2 e^{2\beta t}dt \right)^2\right]^{\frac{1}{2}}\\
  &\leq&\ds N B_3^\frac{1}{4}(x,\xi_0)\cdot \left[N\delta^4(|g|_1^4+\|f(\cdot)\|_{0,1}^4)B_3^\frac{1}{2}(x,\xi_0)+\delta^4 o(\delta, n,T)\right]^{\frac{1}{2}}\\
  &\leq&\ds N\delta^2 B_3^\frac{1}{2}(x,\xi_0)(|g|_1^2+\|f(\cdot)\|_{0,1}^2)+\delta^2 o^{1\over 2}(\delta, n,T),\quad \forall (x,\xi_0)\in D_{\delta_1}^\lambda\times\mathbb{R}^d.
 \end{eqnarray*}
 Hence, we have for $(x,\xi_0)\in D_{\delta_1}^\lambda\times\mathbb{R}^d$
 \begin{eqnarray}
   &&\ds E_{x,\xi_0}\int_t^{\bar{\gamma}_1}{\rm II_3}e^{4\beta s}ds\nonumber\\
 &\leq &\ds E_{x,\xi_0}\int_t^{\bar{\gamma}_1}\epsilon |\nabla Y_s^\delta-\nabla Y_s^{-\delta}|^2e^{4\beta s}ds+NE_{x,\xi_0}\int_0^{\bar{\gamma}_1}\delta^2|\pi_t|^2\left\|\tilde Z_t^\delta-Z_t\right\|^2e^{4\beta t}dt \nonumber\\
 &\leq&\ds E_{x,\xi_0}\int_t^{\bar{\gamma}_1}\epsilon |\nabla Y_s^\delta-\nabla Y_s^{-\delta}|^2e^{4\beta s}ds+N\delta^4 B_3^\frac{1}{2}(x,\xi_0)(|g|_1^2+\|f(\cdot)\|_{0,1}^2)+\delta^4 o^{1\over 2}(\delta, n,T).\nonumber\\
 \label{eq:estin23}
 \end{eqnarray}

Fourth, we estimate the term ${\rm II_4}$. Set
\begin{equation*}
 \Xi_{\lambda,t}^\delta:=(X_t+\lambda\nabla X_t^\delta,Y_t+\lambda\nabla Y_t^\delta, Z_t+\lambda\nabla Z_t^\delta).
\end{equation*}
We have
\begin{eqnarray*}
  &&\displaystyle f(X_t^\delta,Y_t^\delta,Z_t^\delta)-2f(X_t,Y_t,Z_t)+f(X_t^{-\delta},Y_t^{-\delta},Z_t^{-\delta})\\
  &=&\displaystyle \int_0^1 \left[f_x(\Xi_{\lambda,t}^\delta)-f_x(\Xi_{-\lambda,t}^{-\delta}) \right]d\lambda \nabla X_t^\delta+\int_0^1f_x(\Xi_{-\lambda,t}^{-\delta})d\lambda (\nabla X_t^\delta-\nabla X_t^{-\delta})\\
  &&\displaystyle +\int_0^1 \left[ f_y(\Xi_{\lambda,t}^\delta)-f_y(\Xi_{-\lambda,t}^{-\delta})\right]d\lambda \nabla Y_t^\delta+\int_0^1f_y(\Xi_{-\lambda,t}^{-\delta})d\lambda (\nabla Y_t^\delta-\nabla Y_t^{-\delta})\\
  &&\displaystyle +\int_0^1 \left[ f_z(\Xi_{\lambda,t}^\delta)-f_z(\Xi_{-\lambda,t}^{-\delta})\right]d\lambda \nabla Z_t^\delta+\int_0^1f_z(\Xi_{-\lambda,t}^{-\delta})d\lambda (\nabla Z_t^\delta-\nabla Z_t^{-\delta}).
\end{eqnarray*}
Set
\begin{eqnarray*}
&&{\rm II_{4.1}}:=|\Xi_{\lambda,t}^\delta-\Xi_{-\lambda,t}^{-\delta}|^2(|\nabla X_t^\delta|^2+|\nabla Y_t^\delta|^2)\\
&&{\rm II_{4.2}}:=|\nabla X_t^\delta-\nabla X_t^{-\delta}|^2, \\
&&{\rm II_{4.3}}:=2\langle(\nabla Y_t^\delta-\nabla Y_t^{-\delta}), \int_0^1[ f_z(\Xi_{\lambda,t}^\delta)-f_z(\Xi_{-\lambda,t}^{-\delta})]d\lambda \nabla Z_t^\delta+\int_0^1f_z(\Xi_{-\lambda,t}^{-\delta})d\lambda (\nabla Z_t^\delta-\nabla Z_t^{-\delta})\rangle.
\end{eqnarray*}
When $f\in C^2$ with bounded first and second order partial derivatives (see Remark \ref{re:sec4}), we have for $\epsilon\in(0,1)$
\begin{equation*}
 {\rm II_4}\leq(3\epsilon-2\mu)\left|\nabla Y_t^\delta-\nabla Y_t^{-\delta}\right|^2+ N[f]_{1,1}^2 {\rm II_{4,1}}+\|f(\cdot)\|_{0,1}^2\epsilon^{-1}{\rm II_{4.2}}+{\rm II_{4.3}}.
\end{equation*}
By H\"{o}lder's inequality, Assertion \rm{(iii)} in Lemma \ref{le:quasiestimate1}, formulas (\ref{eq:estin11}) and (\ref{eq:estin28}), we have
 \begin{small}\begin{eqnarray}
  &&\ds E_{x,\xi_0}\int_0^{\bar{\gamma}_1}{\rm II_{4.1}}e^{4\beta t}dt\nonumber\\
  &\leq&\ds E_{x,\xi_0}\left[\sup\limits_{0\leq t\leq\bar{\gamma}_1}(|\nabla X_t^\delta|^4+|\nabla Y_t^\delta|^4e^{4\beta t})\right]^\frac{1}{2}\cdot E_{x,\xi_0}\left[\left(\int_0^{\bar{\gamma}_1}
  \left|\Xi_{\lambda,t}^\delta-\Xi_{-\lambda,t}^{-\delta}\right|^2e^{2\beta t} dt\right)^2\right]^\frac{1}{2}\nonumber\\
  &\leq&\ds \left\{\delta^2E_{x,\xi_0}\left[\sup\limits_{0\leq t\leq\bar{\gamma}_1}\left|\frac{X_t^\delta-X_t}{\delta}-\xi_t\right|^4+\left|\xi_t\right|^4\right]^\frac{1}{2}
  +N\delta^2B_3^{\frac{1}{4}}(x,\xi_0)(|g|_1^2+\|f(\cdot)\|_{0,1}^2)+\delta^2 o^{1\over 2}(\delta, n,T)\right\}\nonumber\\
  & &\ds \cdot \left\{N\delta^4 B_3^{\frac{1}{2}}(x,\xi_0) [(1+|g|_1^4+\|f(\cdot)\|_{0,1}^4)+\delta^4 o(\delta, n,T)] \right\}^\frac{1}{2}\nonumber\\
  &\leq&\displaystyle N\delta^4B_3^{\frac{1}{2}}(x,\xi_0)(1+|g|_1^4+\|f(\cdot)\|_{0,1}^4)+\delta^4 o(\delta, n,T),\quad \forall (x,\xi_0)\in D_{\delta_1}^\lambda\times\mathbb{R}^d.\label{eq:estin262}
 \end{eqnarray}\end{small}
 Hence, we have for $(x,\xi_0)\in D_{\delta_1}^\lambda\times\mathbb{R}^d$
 \begin{equation}\label{eq:estin26}
   N[f]_{1,1}^2 E_{x,\xi_0}\int_0^{\bar{\gamma}_1}{\rm II_{4.1}}e^{4\beta t}dt
  \leq N\delta^4B_3^{\frac{1}{2}}(x,\xi_0)(1+|g|_1^4+\|f(\cdot)\|_{0,1}^4)[f]_{1,1}^2+\delta^4 o(\delta, n,T).
 \end{equation}
In view of Assertion \rm{(iii)} in Lemma \ref{le:quasiestimate3} and formula (\ref{eq:estin12}), we have for $\eta_0=0$
\begin{eqnarray}
 \ds E_{x,\xi_0}\int_0^{\bar{\gamma}_1}{\rm II_{4.2}}dt&\leq&\ds E_{x,\xi_0}\left[\sup\limits_{0\leq t\leq\bar{\gamma}_1}\left|\nabla X_t^\delta-\nabla X_t^{-\delta}\right|^4\right]^\frac{1}{2}\cdot E_{x,\xi_0}[\bar{\gamma}_1^2]^\frac{1}{2}\nonumber\\
 &\leq&\ds N\delta^4 E_{x,\xi_0,0}\left[\sup\limits_{0\leq t\leq\bar{\gamma}_1}\left|\frac{\nabla X_t^\delta-\nabla X_t^{-\delta}}{\delta^2}-\eta_t\right|^4+\left|\eta_t\right|^4\right]^\frac{1}{2}\nonumber\\
 &\leq&\displaystyle N\delta^4B_3^\frac{1}{2}(x,\xi_0)+o(\delta^5),\quad \forall (x,\xi_0)\in D_{\delta_1}^\lambda\times\mathbb{R}^d.\label{eq:estin27}
\end{eqnarray}

Fifth, we need to treat the hardest term $\rm II_{4.3}$ consisting of $\nabla Z_t^\delta$. From the estimates \eqref{eq:estin21}-\eqref{eq:estin27} and by $(H7)$, we choose positive constants $\epsilon_1,\ \epsilon_2$ and $\epsilon$ such that $$0<\epsilon_1<3[8(1+2c_p^2)]^{-1}, \quad\epsilon_2=c_p^2/4, \quad \hbox{ \rm and } \quad -4\beta+6\epsilon-2\mu+4L_0^2<0,$$
 where $c_p$ is the constant in the BDG inequality. We have for $(x,\xi_0)\in D_{\delta_1}^\lambda\times\mathbb{R}^d$,
\begin{small}\begin{eqnarray*}
 &&\ds E_{x,\xi_0}\left[\left|\nabla Y_t^\delta-\nabla Y_t^{-\delta}\right|^2e^{4\beta t}\right]+E_{x,\xi_0} \int_t^{\bar{\gamma}_1} \left\|\tilde Z_s^\delta-2 Z_s+\tilde Z_s^{-\delta}\right\|^2e^{4\beta s}ds\\
 &\leq&\ds E_{x,\xi_0}\left[\left|\nabla Y_{\bar{\gamma}_1}^\delta-\nabla Y_{\bar{\gamma}_1}^{-\delta}\right|^2e^{4\beta\bar{\gamma}_1}\right]+N_1\delta^4B_3^{\frac{1}{2}}(x,\xi_0)+\delta^4 o(\delta, n,T)\\
 &&\ds +E_{x,\xi_0}\int_t^{\bar{\gamma}_1}(-4\beta+6\epsilon-2\mu)\left|\nabla Y_s^{\delta}-\nabla Y_s^{-\delta}\right|^2e^{4\beta s}ds+E_{x,\xi_0}\int_t^{\bar{\gamma}_1}e^{4\beta s} {\rm II_{4.3}}ds\\
 &\leq&\ds E_{x,\xi_0}\left[\left|\nabla Y_{\bar{\gamma}_1}^\delta-\nabla Y_{\bar{\gamma}_1}^{-\delta}\right|^2e^{4\beta\bar{\gamma}_1}\right]+N_1\delta^4B_3^{\frac{1}{2}}(x,\xi_0)+\delta^4 o(\delta, n,T)\\
 &&\ds +\epsilon_1E_{x,\xi_0}\left[\sup\limits_{t\in [0,\bar{\gamma}_1]}\left|\nabla Y_t^\delta-\nabla Y_t^{-\delta}\right|^2e^{4\beta t}\right]+E\int_t^{\bar{\gamma}_1}\frac{\epsilon_2}{c_p^2}\left\|\nabla Z_s^\delta-\nabla Z_s^{-\delta}\right\|^2 e^{4\beta s}ds\\
  &&\ds+N[f]_{1,1}^2E_{x,\xi_0}\left[\left(\int_t^{\bar{\gamma}_1}\left|\Xi_{\lambda,s}^{\delta}-\Xi_{-\lambda,s}^{-\delta}\right|^2e^{2\beta s}ds\right)^2\right]^{\frac{1}{2}}
 \cdot E_{x,\xi_0}\left[\left(\int_t^{\bar{\gamma}_1}\left\|\nabla Z_s^{\delta}\right\|^2e^{2\beta s}ds\right)^2\right]^{\frac{1}{2}}\\
 &\leq&\ds E_{x,\xi_0}\left[\left|\nabla Y_{\bar{\gamma}_1}^\delta-\nabla Y_{\bar{\gamma}_1}^{-\delta}\right|^2e^{4\beta\bar{\gamma}_1}\right]+N_1\delta^4B_3^{\frac{1}{2}}(x,\xi_0)+\delta^4 o(\delta, n,T)\\
 &&\ds +\epsilon_1E_{x,\xi_0}\left[\sup\limits_{t\in [0,\bar{\gamma}_1]}\left|\nabla Y_t^\delta-\nabla Y_t^{-\delta}\right|^2e^{4\beta t}\right]+E_{x,\xi_0}\int_t^{\bar{\gamma}_1}\frac{\epsilon_2}{c_p^2}\left\|\nabla Z_s^\delta-\nabla Z_s^{-\delta}\right\|^2 e^{4\beta s}ds,
\end{eqnarray*}\end{small}
where
\begin{equation} \label{eq:N_1}
N_1:=N[|g|_1^2+\|f(\cdot)\|_{0,1}^2+[f]_{1,1}^2(1+|g|_1^4+\|f(\cdot)\|_{0,1}^4)].
\end{equation}
Meanwhile, using the BDG inequality, we have
\begin{small}\begin{eqnarray*}
 &&\ds E_{x,\xi_0}\left[\sup\limits_{t\in[0,\bar{\gamma}_1]}\left|\nabla Y_t^\delta-\nabla Y_t^{-\delta}\right|^2e^{4\beta t}\right]\\
 &\leq&\ds E_{x,\xi_0}\left[\left|\nabla Y_{\bar{\gamma}_1}^\delta-\nabla Y_{\bar{\gamma}_1}^{-\delta}\right|^2e^{4\beta\bar{\gamma}_1}\right]+N_1\delta^4B_3^{\frac{1}{2}}(x,\xi_0)+\delta^4 o(\delta, n,T)\\
 &&\ds +\epsilon_1E_{x,\xi_0}\left[\sup\limits_{t\in [0,\bar{\gamma}_1]}\left|\nabla Y_t^\delta-\nabla Y_t^{-\delta}\right|^2e^{4\beta t}\right]+E_{x,\xi_0}\int_0^{\bar{\gamma}_1}\frac{\epsilon_2}{c_p^2}\left\|\nabla Z_s^\delta-\nabla Z_s^{-\delta}\right\|^2e^{4\beta s}ds\\
 &&\ds +\frac{1}{2}E_{x,\xi_0}\left[\sup\limits_{t\in[0,\bar{\gamma}_1]}\left|\nabla Y_t^\delta-\nabla Y_t^{-\delta}\right|^2e^{4\beta t}\right]+2c_p^2E_{x,\xi_0}\int_0^{\bar{\gamma}_1} \left\|\tilde Z_s^\delta-2 Z_s+\tilde Z_s^{-\delta}\right\|^2e^{4\beta s}ds\\
 &\leq&\ds NE_{x,\xi_0}\left[\left|\nabla Y_{\bar{\gamma}_1}^\delta-\nabla Y_{\bar{\gamma}_1}^{-\delta}\right|^2e^{4\beta\bar{\gamma}_1}\right]+N_1\delta^4B_3^{\frac{1}{2}}(x,\xi_0)+\delta^4 o(\delta, n,T)\\
 &&\ds +(\epsilon_1+2c_p^2\epsilon_1)E_{x,\xi_0}\left[\sup\limits_{t\in [0,\bar{\gamma}_1]}\left|\nabla Y_t^\delta-\nabla Y_t^{-\delta}\right|^2e^{4\beta t}\right]\\
 &&+\left(\frac{\epsilon_2}{c_p^2}+2\epsilon_2\right)E_{x,\xi_0}\int_0^{\bar{\gamma}_1}\left\|\nabla Z_s^\delta-\nabla Z_s^{-\delta}\right\|^2e^{4\beta s}ds,\quad \forall (x,\xi_0)\in D_{\delta_1}^\lambda\times\mathbb{R}^d.\\
\end{eqnarray*}\end{small}
So, we have for $(x,\xi_0)\in D_{\delta_1}^\lambda\times\mathbb{R}^d$,
\begin{eqnarray}
 \displaystyle E_{x,\xi_0}\left[\left|\nabla Y_t^\delta-\nabla Y_t^{-\delta}\right|^2e^{4\beta t}\right]&\leq& NE_{x,\xi_0}\left[\left|\nabla Y_{\bar{\gamma}_1}^\delta-\nabla Y_{\bar{\gamma}_1}^{-\delta}\right|^2e^{4\beta\bar{\gamma}_1}\right]+N_1\delta^4B_3^{\frac{1}{2}}(x,\xi_0)\nonumber\\
 &&\displaystyle +\epsilon_3E_{x,\xi_0}\int_0^{\bar{\gamma}_1}\left\|\nabla Z_s^\delta-\nabla Z_s^{-\delta}\right\|^2e^{4\beta s}ds+\delta^4 o(\delta, n,T),\nonumber\\
 \label{eq:result1_in_Hessian}
\end{eqnarray}
where $\epsilon_3:=\epsilon_2c_p^{-2}+2\epsilon_1\epsilon_2(c_p^{-2}+2)[1-2\epsilon_1(1+2c_p^2)]^{-1}<1$.
Then, we estimate the term $\|\nabla Z_s^\delta-\nabla Z_s^{-\delta}\|^2$.
Set $${\rm II_5}:=\|Z_t^\delta-\tilde Z_t^\delta-\tilde Z_t^{-\delta}+Z_t^{-\delta}\|^2.$$
Through simple calculation, for sufficiently small $\epsilon_4\in(0,1)$ satisfying $(1+\epsilon_4)\epsilon_3<1$, we have
\begin{equation*}
\left\|\nabla Z_t^\delta-\nabla Z_t^{-\delta}\right\|^2\leq (1+\epsilon_4)\left\|\tilde Z_t^\delta-2Z_t+\tilde Z_t^{-\delta}\right\|^2+(1+\frac{1}{\epsilon_4}){\rm II_5}.
\end{equation*}
We continue to estimate the term ${\rm II_5}$.
Write
\begin{eqnarray*}
&&{\rm II_{5.1}}:=\delta^2r_t^2\|Z_t^\delta-Z_t^{-\delta}\|^2,\\
&&{\rm II_{5.2}}:=\delta^4(r_t^4+\tilde r_t^2)[\|Z_t^\delta\|^2+\|Z_t^{-\delta}\|^2],\\
&&{\rm II_{5.3}}:=\delta^2P_t^2\|Z_t^\delta(1+2\delta r_t+\delta^2\tilde r_t)^\frac{1}{2}-Z_t^{-\delta}(1-2\delta r_t+\delta^2\tilde r_t)^\frac{1}{2}\|^2,\\
&&{\rm II_{5.4}}:=\delta^4P_t^4[\|Z_t^\delta\|^2(1+2\delta r_t+\delta^2\tilde r_t)+\|Z_t^{-\delta}\|^2(1-2\delta r_t+\delta^2\tilde r_t)].
\end{eqnarray*}
Using Taylor expansion as formula \eqref{eq:taylor_in_hessian} to the terms $1-(1+2\delta r_t+\delta^2\tilde r_t)^\frac{1}{2}e^{\delta P_t}$ and $1-(1-2\delta r_t+\delta^2\tilde r_t)^\frac{1}{2}e^{-\delta P_t}$, we have
\begin{eqnarray*}
 {\rm II_5}&=&\displaystyle \|\delta r_t(-Z_t^\delta+Z_t^{-\delta})+(-\frac{1}{2}\delta^2\tilde r_t+\frac{1}{2}\delta^2r_t^2)(Z_t^\delta+Z_t^{-\delta})\\
&&\displaystyle +\delta P_t(-Z_t^\delta(1+2\delta r_t+\delta^2\tilde r_t)^\frac{1}{2}+Z_t^{-\delta}(1-2\delta r_t+\delta^2\tilde r_t)^\frac{1}{2})\\
 &&\displaystyle -\frac{1}{2}\delta^2P_t^2(Z_t^\delta(1+2\delta r_t+\delta^2\tilde r_t)^\frac{1}{2}+Z_t^{-\delta}(1-2\delta r_t+\delta^2\tilde r_t)^\frac{1}{2})\|^2+o(\delta^5)\\
 &\leq& N({\rm II_{5.1}}+{\rm II_{5.2}}+{\rm II_{5.3}}+{\rm II_{5.4}})+o(\delta^{\, 5}).
\end{eqnarray*}
Then, in view of Assertion \rm{(iii)} in Lemma \ref{le:quasiestimate1} and formula (\ref{eq:estin28}), we have
\begin{eqnarray*}
 &&\ds E_{x,\xi_0}\int_0^{\bar{\gamma}_1}e^{4\beta t}{\rm II_{5.1}}dt\\
 &\leq&\displaystyle \delta^2E_{x,\xi_0}\left[\sup\limits_{ 0\leq t\leq\bar{\gamma}_1} r_t^4e^{4\beta t}\right]^{\frac{1}{2}}\cdot E_{x,\xi_0}\left[\left(\int_0^{\bar{\gamma}_1}\left\|Z_t^\delta-Z_t^{-\delta}\right\|^2e^{2\beta t}dt\right)^2\right]^\frac{1}{2}\\
 &\leq&\displaystyle N\delta^4 B_3^\frac{1}{2}(x,\xi_0)(|g|^2_1+\|f(\cdot)\|_{0,1}^2)+\delta^4 o(\delta, n, T),\quad \forall (x,\xi_0)\in D_{\delta_1}^\lambda\times\mathbb{R}^d.
\end{eqnarray*}
In view of Assertion \rm{(ii)} in Lemma \ref{le:quasiestimate3} and Proposition \ref{pro:estin9}, we have
\begin{eqnarray*}
 &&\ds E_{x,\xi_0}\int_0^{\bar{\gamma}_1}e^{4\beta t}{\rm II_{5.2}}dt\\
 &\leq &\displaystyle \delta^4E_{x,\xi_0}\left\{\sup\limits_{0\leq t\leq\bar{\gamma}_1}\left[(r_t^4+\tilde r_t^2)e^{4\beta t}\right]\cdot\int_0^{\bar{\gamma}_1}(\left\|Z_t^{\delta}\right\|^2+\left\|Z_t^{-\delta}\right\|^2)dt\right\}\\
 &\leq&\displaystyle N\delta^4 B_3^\frac{1}{2}(x,\xi_0)(|g|_0^2+|f(\cdot,0,0)|_0^2),\quad \forall (x,\xi_0)\in D_{\delta_1}^\lambda\times\mathbb{R}^d.
\end{eqnarray*}
Terms ${\rm II_{5.3}}$ and ${\rm II_{5.4}}$ can be estimated in a similar way.
In summary, we have
\begin{equation}\label{eq:result2_in_Hessian}
 E_{x,\xi_0}\int_0^{\bar{\gamma}_1} e^{4\beta t}{\rm II_5} dt\leq N\delta^4 B_3^\frac{1}{2}(x,\xi_0)(|g|^2_1+\|f(\cdot)\|_{0,1}^2)+\delta^4 o(\delta, n, T),\quad \forall (x,\xi_0)\in D_{\delta_1}^\lambda\times\mathbb{R}^d.
\end{equation}
So, by formulas \eqref{eq:result1_in_Hessian} and \eqref{eq:result2_in_Hessian}, we have for $(x,\xi_0)\in D_{\delta_1}^\lambda\times\mathbb{R}^d$
\begin{equation}\label{eq:result3_in_Hessian}
E_{x,\xi_0}\left[\left|\nabla Y_t^\delta-\nabla Y_t^{-\delta}\right|^2e^{4\beta t}\right]\leq NE\left[\left|\nabla Y_{\bar{\gamma}_1}^\delta-\nabla Y_{\bar{\gamma}_1}^{-\delta}\right|^2e^{4\beta \bar{\gamma}_1}\right]+N_1\delta^4B_3^{\frac{1}{2}}(x,\xi_0)+\delta^4 o(\delta, n, T).
\end{equation}

Sixth, repeating \textbf{Step 2} in the proof of Theorem \ref{thm:firstorder} and using Assertion \rm{(iii)} in Lemma \ref{le:quasiestimate3}, we have for $(x,\xi_0)\in D_{\delta_1}^\lambda\times\mathbb{R}^d$ and $\eta_0=0$
 \begin{small}\begin{eqnarray}
  &&\ds \lim\limits_{\delta\to 0}\frac{1}{\delta^4}E_{x,\xi_0}\left[\left|Y_{\bar{\gamma}_1}^\delta-2Y_{\bar{\gamma}_1}+Y_{\bar{\gamma}_1}^{-\delta}\right|^2 e^{4\beta\bar{\gamma}_1}\right]\nonumber\\
  &\leq&\ds \lim\limits_{\delta\to 0}E_{x,\xi_0,0}\left[\left|\frac{u(X_{\bar{\gamma}_1}^\delta)-2u(X_{\bar{\gamma}_1})+u(X_{\bar{\gamma}_1}^{-\delta})}{\delta^2}-u_{(\eta_{\bar{\gamma}_1})}(X_{\bar{\gamma}_1})
  -u_{(\xi_{\bar{\gamma}_1})(\xi_{\bar{\gamma}_1})}(X_{\bar{\gamma}_1})\right|^2e^{4\beta\bar{\gamma}_1}\right]\nonumber\\
  &&\ds+\lim\limits_{\delta\to 0}E_{x,\xi_0,0}\left[\left|u_{(\eta_{\bar{\gamma}_1})}(X_{\bar{\gamma}_1})+u_{(\xi_{\bar{\gamma}_1})(\xi_{\bar{\gamma}_1})}(X_{\bar{\gamma}_1})\right|^2e^{4\beta\bar{\gamma}_1}\right]\nonumber\\
  &\leq&\ds \lim\limits_{\delta\to 0}\left[E_{x,\xi_0}\left|u_{(\xi_{\bar{\gamma}_1})(\xi_{\bar{\gamma}_1})}(X_{\bar{\gamma}_1})\right|^2+\mathop{\sup\limits_{y\in\partial D_{\delta_1}^\lambda,}}_{ |\zeta|=1} \left|u_{(\zeta)}(y)\right|^2\cdot E_0\left|\eta_{\bar{\gamma}_1}\right|^2\right]\nonumber\\
  &\leq&\displaystyle \mathop{\sup\limits_{y\in\partial D_{\delta_1}^\lambda,}}_{ 0\neq\zeta\in\mathbb{R}^d}\frac{\left|u_{(\zeta)(\zeta)}(y)\right|^2}{B_3^{1\over 2}(y,\zeta)}B_3^\sq(x,\xi_0)+\left[\mathop{\sup\limits_{y\in\partial D_{\delta_1}^\lambda,}}_{|\zeta|=1} |u_{(\zeta)}(y)|^2\right]NB_3^\sq(x,\xi_0).\label{eq:result4_in_Hessian}
 \end{eqnarray}\end{small}

For $t=0$, divide $\delta^4$ in both sides of \eqref{eq:result3_in_Hessian}. Let $\delta\to 0$, then $(T,n)\to (+\infty,+\infty)$. By \eqref{eq:result4_in_Hessian}, we have for $(x,\xi_0)\in D_{\delta_1}^\lambda\times\mathbb{R}^d\setminus\{0\}$ and $\eta_0=0$
\begin{equation*}
 \frac{\left|u_{(\xi_0)(\xi_0)}(x)\right|^2}{B_3^\sq(x,\xi_0)}\leq \mathop{\sup\limits_{y\in\partial D_{\delta_1}^\lambda,}}_{0\neq\zeta\in\mathbb{R}^d}\frac{\left|u_{(\zeta)(\zeta)}(y)\right|^2}{B_3^\sq(y,\zeta)}+N\mathop{\sup\limits_{y\in\partial D_{\delta_1}^\lambda,}}_{|\zeta|=1} \left|u_{(\zeta)}(y)\right|^2+N_1.
\end{equation*}

\textbf{Step 2.} Repeating the arguments in \textbf{Step 1} for $x\in D_{\lambda^2}$, we have for $(x, \xi_0) \in D_{\lambda^2}\times \mathbb{R}^d\setminus\{0\}$ and $\eta_0=0$,

\begin{equation*}
 \frac{\left|u_{(\xi_0)(\xi_0)}(x)\right|^2}{B_4^\sq(\xi_0)}\leq \mathop{\sup\limits_{y\in\partial D_{\lambda^2},}}_{ 0\neq\zeta\in\mathbb{R}^d}\frac{\left|u_{(\zeta)(\zeta)}(y)\right|^2}{B_4^\sq(\zeta)}+N_1.
\end{equation*}

\textbf{Step 3.} In view of Lemma \ref{le:normalestimate} and Theorem \ref{thm:firstorder}, we have
\begin{eqnarray*}
  &&\displaystyle \lim\limits_{\delta_1\to 0}\mathop{\sup\limits_{x\in\partial D_{\delta_1}^\lambda,}}_{|\zeta|=1}\left|u_{(\zeta)}(x)\right|^2\\
  &\leq &\displaystyle \mathop{\sup\limits_{x\in\partial D,}}_{|l|=1,\ l\|\partial D}\left|u_{(l)}(x)\right|^2+\mathop{\sup\limits_{x\in\partial D,}}_{|n|=1,\ n\bot\partial D}\left|u_{(n)}(x)\right|^2+\mathop{\sup\limits_{\psi(x)=\lambda,}}_{|\zeta|=1}\left|u_{(\zeta)}(x)\right|^2\\
  &\leq&\displaystyle \mathop{\sup\limits_{x\in\partial D,}}_{|l|=1,\ l\|\partial D}\left|g_{(l)}(x)\right|^2+N(|g|_2^2+|f(\cdot,0,0)|_0^2)+N\left(1+\frac{|\psi|_1}{\lambda^{\frac{3}{2}}}\right)(|g|_1^2+\|f(\cdot)\|_{0,1}^2).
 \end{eqnarray*}

With the analogues of Lemmas \ref{le:barriers} and \ref{le:estin2_in_sec4}, we obtain
\begin{equation*}
 \left|u_{(\xi_0)(\xi_0)}(x)\right|\leq N_2\left(|\xi_0|^2+\frac{\psi_{(\xi_0)}^2(x)}{\psi^{\frac{7}{4}}(x)}\right), \quad a.e.\ x\in D,\ \forall \xi_0\in\mathbb{R}^d,
\end{equation*}
where
\begin{equation} \label{eq:N_2}
N_2=N[|g|_{1,1}+\|f(\cdot)\|_{0,1}+[f]_{1,1}(1+|g|_1^2+\|f(\cdot)\|_{0,1}^2)].
\end{equation}

The proof is complete.
\end{proof}

The proof of Theorem \ref{thm:estimate} remains to prove the existence and uniqueness of \eqref{eq:Par2}. Before that, we need the existence, uniqueness and probabilistic interpretation of weak solutions (in Sobolev sense) for the Dirichlet problem of systems of semi-linear degenerate elliptic PDEs, which was inspired by the work of Bally and Matoussi \cite{BAL}. However, due to the length of the paper, we will not include all the arguments here. The outline of the proof is as follows: Firstly, since the coefficients $b, \sigma$ are defined on the whole space, applying \cite[Proposition 5.1]{BAL} and choosing $\varphi:=\varphi\cdot 1_D$, we get a norm equivalence result in a bounded domain. Secondly, according to the proof of \cite[Theorem 2.1]{BAL}, using the approximation procedure (see \cite[page 138]{BAL}), we get a probabilistic interpretation for the solution of the linear elliptic system. Finally, for the semi-linear system, using the norm equivalence result and the probabilistic interpretation for the linear system mentioned above, following the proof of \cite[Theorem 3.1]{BAL}, we have: under the assumptions $(H1)$, $(H4)-(H7)$, with the well-posedness of solutions to random horizon BSDEs (see Lemma \ref{le:BSDE}), there is a unique weak solution $u\in L_\rho^2(\overline{D})$ of \eqref{eq:Par2}.

\begin{proof}[Proof of the existence and uniqueness of \eqref{eq:Par2}]
Let $u$ is given by \eqref{eq:Sol1}. The Lipschitz continuity of the solution $u$ up to the boundary can be proved by Lemma \ref{le:normalestimate} for boundary normal derivative estimates and by $(H6)_1$ for boundary tangential derivative estimates.
Moreover, since $u$ is a weak solution of \eqref{eq:Par2} in $L_\rho^2$, and has second derivatives almost everywhere by \ref{thm:secondorder}, then $u$ given by \eqref{eq:Sol1} satisfies \eqref{eq:Par2} almost everywhere in the space $C_{loc}^{1,1}(D)\cap C^{0,1}(\overline{D})$.

For the uniqueness, if PDE \eqref{eq:Par2} has a weak solution $u$, then we have $u(X_t)=Y_t$ and $(\nabla u\sigma)(X_t)=Z_t$, where $(Y_t,Z_t)$ is the solution of BSDE \eqref{eq:BSDE1}. Then the uniqueness of the weak solutions of PDEs follows from the uniqueness of solutions of FBSDEs.
The proof is complete.
\end{proof}

\begin{Remark} Consider the case of $k=1$. Let $U$ be a separable metric space. By $\mathcal{U}$, we denote the set of progressively measurable processes $\alpha_t$ taking values in $U$. In the above proof,  if we replace $(\sigma(x)$, $b(x)$, $f(x,y,z)$ and $g(x)$ in (\ref{eq:BSDE1}) and (\ref{eq:SDE1}) with $\sigma(\alpha,x)$, $b(\alpha,x)$, $f(\alpha,x,y,z)$ and $g(\alpha,x)$, where $\alpha\in\mathcal{U}$ is the control variable, under appropriate measurable assumptions, the gradient and Hessian estimates (\ref{eq:estimate1}) and (\ref{eq:estimate2}) are still true. In this way, we can get the interior regularity estimates for the solution of the so-called HJB equations, which is nonlinear and degenerate elliptic PDEs in a domain.
\end{Remark}


\end{document}